\documentclass[11pt]{article}
\usepackage{indentfirst,latexsym,bm}
\usepackage{amsfonts}
\usepackage{amssymb}
\usepackage[leqno]{amsmath}
\usepackage{amsbsy}
\usepackage{dsfont}
\usepackage{leftidx}
\usepackage{amsthm}
\usepackage{amscd}
\usepackage[all]{xy}
\usepackage{chemarrow}
\usepackage{multirow}
\usepackage{hyperref}
\usepackage{color,xcolor}
\setlength{\textwidth}{14cm} \setlength{\textheight}{20cm}
\setlength{\hoffset}{0cm} \setlength{\voffset}{0cm}
\setlength{\parindent}{2em}                 
\setlength{\parskip}{3pt plus1pt minus1pt}  
\begin{document}
\title{Categorical Morita equivalence and monoidal Morita equivalence of semisimple Hopf algebras of dimension $pqr$}
\author{Zhiqiang Yu\thanks{Email:\,zhiqyu-math@hotmail.com}
\\{\small Department of Mathematics,} \\ {\small East China Normal University,
Shanghai 200241, China}
}
\date{}
\maketitle

\newtheorem{question}{Question}
\newtheorem{defi}{Definition}[section]
\newtheorem{conj}{Conjecture}
\newtheorem{thm}[defi]{Theorem}
\newtheorem{lem}[defi]{Lemma}
\newtheorem{pro}[defi]{Proposition}
\newtheorem{cor}[defi]{Corollary}
\newtheorem{rmk}[defi]{Remark}
\newtheorem{Example}{Example}[section]

\newcommand{\C}{\mathcal{C}}
\newcommand{\D}{\mathcal{D}}
\newcommand{\M}{\mathcal{M}}
\newcommand{\N}{\mathcal{N}}

\theoremstyle{plain}
\newcounter{maint}
\renewcommand{\themaint}{\Alph{maint}}
\newtheorem{mainthm}[maint]{Theorem}

\theoremstyle{plain}
\newtheorem*{proofthma}{Proof of Theorem A}
\newtheorem*{proofthmb}{Proof of Theorem B}
\abstract
In this paper, we determine the cocycle deformations and Galois objects for semisimple Hopf algebras of dimension $pqr$, where $p,q,r$ are distinct primes, and decide the categorically Morita equivalent classes and monoidally Morita equivalent classes of them. We show that all of them only have one trivial Galois objects, therefore these Hopf algebras are pairwise twist inequivalent, equivalently they are not monoidally Morita equivalent to each other, moreover, all the categorically Morita equivalent classes are determined.

\bigskip
\noindent {\bf Keywords:} Group-theoretical fusion category; categorical Morita equivalence; monoidal Morita equivalence.
\section{Introduction}
Throughout this paper, we always assume $k$ is an algebraically closed and has characteristic zero, $k^{*}:=k\backslash {\{0}\}$, $\mathbb{Z}_r:=\mathbb{Z}/r$.

For any finite abelian category $\mathcal{A}$, we use $rank(\mathcal{A})$ to denote the cardinal of set of non-isomorphic simple objects of $\mathcal{A}$.

A fiber functor is a faithful exact tensor functor from a finite tensor category $\mathcal{C}$ to the tensor category $Vec$ \cite{EGNO}, where $Vec$ is the category of finite-dimensional vector spaces.
Ulbrich \cite{Ul} proved that for a finite-dimensional Hopf algebra $H$, the right Galois objects of $H$ are in bijection with the fiber functors on $\mathcal{M}^H\cong Rep(H^{*})$, where $\mathcal{M}^H$ and $Rep(H)$ are the finite dimensional  $H$-comodules category and $H$-modules category respectively.

For finite tensor category $\mathcal{C}$, a left module category $\mathcal{M}$ is equivalent to a tensor functor $\Psi:\mathcal{C}\rightarrow End(\mathcal{M})$, $X\mapsto \Psi_X$, where $End(\mathcal{M})$ is the (multi-)tensor category of functors from $\mathcal{M}$ to $\mathcal{M}$, $\Psi_X(M):=X\otimes M$, $\forall X\in \mathcal{C}, M\in \mathcal{M}$. For a fiber functor $F:\mathcal{C}\rightarrow Vec$, we can regard the abelian category $Vec$ as a $\mathcal{C}$-module category via the fiber functor $F$, that is $X\otimes V:=F(X)\otimes V$, $V\in Vec$. Note that $rank(Vec)=1$.

Conversely, for a rank one $\mathcal{C}$-module category $\mathcal{M}$, we have an abelian category equivalence $Vec\cong \mathcal{M}$. By the axiom of module category, we see there is a unique tensor functor $F:\mathcal{C}\rightarrow End(\mathcal{M})\cong Vec$, that is a fiber functor $F$. Henceforth, there is a bijection between fiber functors on $\mathcal{C}$ and rank one module categories over $\mathcal{C}$.

Schauenburg \cite{S1} showed that for finite-dimensional Hopf algebras $H, L$, their finite-dimensional representation categories are tensor equivalent iff they are differed by a Drifeld twist, that is $H\cong L^J$ as Hopf algebras. Therefore, there is another bijective correspondence between Drinfeld twists of $H$ and fiber functors on $Rep(H)$.

To determine the Galois objects or fiber functors of an arbitrary finite-dimensional Hopf algebra seems impossible nowadays, since we need to classify all finite-dimensional Hopf algebras first; moreover, there does exist finite-dimensional Hopf algebra $H$ which has infinite many non-isomorphic Drinfeld twists, i.e. there are infinitely many inequivalent fiber functors on $Rep(H)$ \cite{EG3}. While, for fusion category $\mathcal{C}$, there are finitely many inequivalent $\mathcal{C}$-module categories (particularly the finiteness hold for fiber functors)\cite{ENO1}; hence, we turn to the semisimple Hopf algebras, whose representation categories are fusion categories. And \c{S}tefen\cite{St}  also proved that for any semisimple Hopf algebra $H$, there are finitely many fiber functors on $Rep(H)$.

Ostrik \cite{O1} showed that for a finite tensor category $\mathcal{C}$, any exact indecomposable module category $\mathcal{M}$ is equivalent to the representation category $Mod_\mathcal{C}(A)$ of an algebra $A\in \mathcal{C}$. In general, it is difficult to construct all these algebras. However, for the group-theoretical fusion category $\mathcal{C}(G,\omega, F,\beta)$, they can be constructed more explicitly, Ostrik proved  all module categories can be represented by using pair $(L,\alpha)$ \cite{O2}. But the classification of equivalent classes is incomplete, the correct criteria is given by Natale \cite{Na2}.

Based on the classifications of semsimple Hopf algebras  and fusion categories  of dimension $pqr$, where $p, q, r$ are distinct primes, and $r<q<p$. All these semisimple Hopf algebras are fitting into some  abelian extensions, therefore they are group-theoretical \cite{Na1,Na3,ENO2}. In this article, we first determine the Galois objects of semisimple Hopf algebras of dimension $pqr$, following the references \cite{CMNVW} and \cite{XY}, and then consider two kinds of equivalence of finite-dimensional Hopf algebras: the monoidal Morita equivalence and the categorical Morita equivalence.

This is our first main result on the monoidal Morita equivalence:
\begin{mainthm}[Theorem \ref{monoidalM}] Semisimple Hopf algebras of dimension $pqr$ have exactly one Galois object, so they are not monoidally Morita equivalent to each other or Drinfeld twist equivalent to each other.
\end{mainthm}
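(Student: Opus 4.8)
The plan is to count Galois objects by running through the chain of bijections assembled in the introduction. By Ulbrich's theorem the right Galois objects of $H$ are the fiber functors on $\mathcal{M}^H\cong Rep(H^{*})$, and by the correspondence recalled above these are in bijection with the rank one module categories over $Rep(H^{*})$. Since every semisimple Hopf algebra of dimension $pqr$ is an abelian extension and hence group-theoretical, I would first fix a group-theoretical realization $Rep(H^{*})\cong \mathcal{C}(\Gamma,\omega,F,\beta)$, where $\Gamma$ is a group of order $pqr$ and $F$ records one half of the matched-pair data, the distinguished complement giving the trivial Galois object being the other factor. The problem then becomes: \emph{determine the rank one module categories over $\mathcal{C}(\Gamma,\omega,F,\beta)$.}

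Using Ostrik's description together with Natale's equivalence criterion, the indecomposable module categories are parametrized by pairs $(L,\psi)$ with $L\le \Gamma$, the restriction $\omega|_{L}$ cohomologically trivial, and $\psi$ a trivializing $2$-cochain, taken up to the appropriate conjugation-and-cohomology equivalence. The rank of $\mathcal{M}(L,\psi)$ is the sum over the double cosets in $F\backslash \Gamma / L$ of the number of simple modules of the relevant twisted group algebra of $F\cap {}^{g}L$. Here I would exploit that $|\Gamma|=pqr$ is squarefree: since every Sylow subgroup is cyclic, the $p$-part of the Schur multiplier injects into that of a cyclic group and therefore vanishes, so $H^{2}(L',k^{*})=1$ for every subgroup $L'\le\Gamma$. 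Consequently each twisted group algebra in the rank formula is an ordinary group algebra, which has a single simple module precisely when its group is trivial. Thus $rank(\mathcal{M}(L,\psi))=1$ forces a single double coset, that is $\Gamma=FL$, together with $F\cap L=\{e\}$; fiber functors therefore correspond exactly to complements $L$ of $F$ in $\Gamma$ (exact factorizations $\Gamma=F\bowtie L$) with $\omega|_{L}$ trivial, up to equivalence.

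It then remains to show that, for each $\Gamma$ and each matched-pair datum occurring in the classification, the only such complement up to equivalence is the distinguished one. For this I would invoke the explicit list of groups of order $pqr$ and their exact factorizations, and for each candidate complement $L$ check the vanishing of $\omega|_{L}$ in $H^{3}(L,k^{*})$ (note that $H^{3}$ of a squarefree-order group need not vanish, e.g.\ $H^{3}(\mathbb{Z}_{n},k^{*})\cong\mathbb{Z}_{n}$, so this step is genuine). The hard part is precisely this bookkeeping: enumerating the complements up to conjugacy and up to the module-category equivalence, and controlling the restriction of the possibly nontrivial associator $\omega$ along each of them. I expect that in most cases the rigidity of the factorization of $\Gamma$ already leaves only one complement, while in the remaining cases the condition $\omega|_{L}=1$ rules out the spurious candidates, leaving exactly one fiber functor, hence exactly one Galois object.

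Finally I would assemble the consequences. A unique Galois object means $Rep(H^{*})$ carries a unique fiber functor; applying the same conclusion to the dual, which again lies in the family since it is closed under duality, shows that $Rep(H)$ carries a unique fiber functor as well. By Schauenburg's theorem the only Hopf algebra tensor-equivalent to $H$, i.e.\ of the form $H^{J}$, is $H$ itself up to isomorphism. Therefore no two non-isomorphic members of the family are Drinfeld twist equivalent, equivalently they are pairwise not monoidally Morita equivalent, which is the assertion of the theorem.
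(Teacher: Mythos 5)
Your overall route is the same as the paper's: reduce Galois objects to fiber functors on the group-theoretical realization, use square-freeness to kill all Schur multipliers (your Sylow-restriction argument for $H^2(L',k^*)=0$ is in fact cleaner and more uniform than the paper's appeal to Beyl's formula for metacyclic groups), and conclude that a rank-one module category forces an exact factorization $\Gamma=FL$, $F\cap L=\{e\}$. Up to that point the argument is sound and matches Theorem \ref{fiberfunctor} (where triviality of $H^2(F\cap L,k^*)$ plus non-degeneracy forces $F\cap L=\{e\}$).

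The genuine gap is that you stop exactly where the real work begins: you write that you ``expect'' each category in the family to admit only one complement up to equivalence, but you never verify it, and this uniqueness is the entire content of the theorem for the non-trivial Hopf algebras. Concretely, for $\mathcal{A}_p(q;r)$ and $\mathcal{A}_p(r;q)$ one has $G\cong\mathbb{Z}_p\rtimes\mathbb{Z}_{qr}$ with $F=\mathbb{Z}_r$ (resp.\ $\mathbb{Z}_q$), and one must show $G$ has a \emph{unique} subgroup of order $pq$ (resp.\ $pr$). The paper supplies this: Lemma \ref{NormalSubgroup} gives a unique (normal) subgroup of order $pq$ in any group of order $pqr$, and a short Sylow argument shows $\mathbb{Z}_p\rtimes\mathbb{Z}_{qr}$ has a unique subgroup of order $pr$ (any element $b^ic^j$ with $i,j>0$ has order $qr$, so a subgroup of order $pr$ cannot contain one). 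Without some such argument your proof does not close. Separately, your worry about checking $\omega|_L$ in $H^3(L,k^*)$ is moot here: by the classification (Theorem of \cite{ENO2} quoted in Section \ref{Semsimplepqr}) every semisimple Hopf algebra of dimension $pqr$ arises from a \emph{split} abelian extension, so $\tau=\sigma=1$ and the associated $3$-cocycle $\omega(\tau,\sigma)$ is already trivial on all of $G$; this condition rules out nothing, and the uniqueness must come purely from group theory as above.
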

Next, we turn to the categorically Morita equivalent classes of them. Based on the classification of semisimple  Hopf algebras of dimension $pqr$ \cite{Na3,ENO2}, and classification of finite groups of order $pqr$ \cite{H,A}, we obtain the following second main  theorems:

First of all, we determine the cases where only exist trivial Hopf algebras, i.e. under the assumption $rq\nmid(p-1)$.
\begin{mainthm}[Proposition \ref{onesemiproduct} and \ref{case4}]If $rq\nmid (p-1)$, the number of categorically Morita equivalent classes semisimple Hopf algebras of dimension $pqr$ are the number of non-isomorphic groups of order $pqr$ for cases $(1)-(3)$ and $(5)-(6)$, and for case $(4)$ the number is $4$ $($if $r=2)$ or $3+\frac{r-1}{2}$ $($if $r>2)$, where the cases $(i)$ is denoting the cases of groups in [Table \ref{table}].
\end{mainthm}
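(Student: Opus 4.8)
The plan is to reduce the enumeration of categorical Morita classes to an enumeration of Drinfeld doubles of groups, and then to carry this out case by case along Table~\ref{table}. Recall that two fusion categories are categorically Morita equivalent exactly when their Drinfeld centers are equivalent as braided fusion categories, so it suffices to count the Hopf algebras $H$ of dimension $pqr$ up to braided equivalence of $Z(Rep(H))$. Under the hypothesis $rq\nmid(p-1)$ every such $H$ is trivial, i.e. $H\cong kG$ or $H\cong k^{G}$ for a group $G$ of order $pqr$; in either case $Z(Rep(H))\cong Rep(D(G))$, the representation category of the untwisted double, since $Rep(kG)=Rep(G)$ and $Rep(k^{G})=Vec_{G}$ are themselves categorically Morita equivalent. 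Consequently $kG$ and $k^{G}$ are always categorically Morita equivalent, and the set of Morita classes is in bijection with the set of groups of order $pqr$ modulo the relation $G\sim G'\iff Rep(D(G))\cong Rep(D(G'))$ as braided fusion categories. The whole problem is thus to determine this relation $\sim$ on the list of groups in Table~\ref{table}.

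First I would treat cases $(1)$--$(3)$ and $(5)$--$(6)$ (Proposition~\ref{onesemiproduct}), where the double is a complete invariant: distinct groups yield inequivalent doubles, so the number of Morita classes equals the number of isomorphism classes of groups. The mechanism is that for these groups $Rep(D(G))$ admits essentially one pointed Lagrangian subcategory, up to the action of braided autoequivalences, so that any braided equivalence $Rep(D(G))\cong Rep(D(G'))$ is realized by a genuine group isomorphism $G\cong G'$. I would verify this by reading off the fusion and modular data of $D(G)$ from the extension data (the abelian normal Sylow subgroup together with the action of the complement) and checking, via the criterion of \cite{Na2}, that these data separate the groups on the list.

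The remaining work is case $(4)$ (Proposition~\ref{case4}), the only case exhibiting genuine coincidences. Here the relevant family consists of the semidirect products $G_{c}=\mathbb{Z}_{pq}\rtimes_{c}\mathbb{Z}_{r}$ in which $\mathbb{Z}_{r}$ acts faithfully on both $\mathbb{Z}_{p}$ and $\mathbb{Z}_{q}$, indexed by $c\in\mathbb{Z}_{r}^{*}$, together with the two single-factor products $(\mathbb{Z}_{p}\rtimes\mathbb{Z}_{r})\times\mathbb{Z}_{q}$ and $\mathbb{Z}_{p}\times(\mathbb{Z}_{q}\rtimes\mathbb{Z}_{r})$ and the abelian group $\mathbb{Z}_{pqr}$. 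The key computation is an explicit braided equivalence $Rep(D(G_{c}))\cong Rep(D(G_{-c}))$, coming from the symmetry of the double under inverting the $\mathbb{Z}_{r}$-action (equivalently, from two distinct exact factorizations of the common center); together with the verification that $G_{c}\not\sim G_{c'}$ for $c'\neq\pm c$, this collapses the $r-1$ diagonal groups into $\tfrac{r-1}{2}$ Morita classes when $r>2$. Adding the three manifestly inequivalent classes of $\mathbb{Z}_{pqr}$ and the two single-factor products gives $3+\tfrac{r-1}{2}$ classes; when $r=2$ the pairing $c\mapsto -c$ is trivial, the single diagonal group $\mathbb{Z}_{pq}\rtimes\mathbb{Z}_{2}$ stands alone, and one obtains $4$ classes. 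Combining the two propositions yields the stated counts.

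The main obstacle is precisely the determination of $\sim$, i.e. deciding exactly which doubles are braided equivalent. The affirmative direction in case $(4)$, producing $Rep(D(G_{c}))\cong Rep(D(G_{-c}))$, and the negative direction elsewhere, showing that no further coincidences occur, both require controlling \emph{all} braided autoequivalences and pointed Lagrangian subcategories of $Rep(D(G))$, not merely its fusion rules. I expect this to be the delicate step: the fusion data alone can fail to separate the groups, so one must bring in the ribbon twists together with the explicit classification of Lagrangian subcategories in the spirit of \cite{Na2} to pin down the braided equivalence type.
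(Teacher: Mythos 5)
Your reduction to groups and your structural picture of case $(4)$ --- the $r-1$ groups $\mathbb{Z}_{pq}\rtimes_c\mathbb{Z}_r$ pairing up under $c\mapsto-c$ while $\mathbb{Z}_{pqr}$ and the two single-factor semidirect products stand alone --- match the paper's conclusions exactly, and the pairing you describe is precisely the paper's duality $G_4\leftrightarrow\widetilde{G_4}$. But the decisive step is not carried out: you explicitly defer ``the delicate step'' of deciding when $Rep(D(G))\cong Rep(D(G'))$, proposing to control all braided autoequivalences and Lagrangian subcategories of the doubles. That is where the entire content of Propositions \ref{onesemiproduct} and \ref{case4} lies, and without it neither the positive assertion $G_c\sim G_{-c}$ nor the negative assertions (no further coincidences) is established. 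The missing ingredient is Naidu's explicit criterion (Proposition \ref{GroupMorita}, from \cite{Nai1}): $Vec_G$ and $Vec_{G'}$ are categorically Morita equivalent iff $G$ has a normal abelian subgroup $H$ and a $G$-invariant $\mu\in H^2(H,k^*)$ with $G'\cong\widehat H\rtimes_\nu(G/H)$ and a certain $3$-cocycle obstruction trivial. For square-free groups every relevant Schur multiplier vanishes (Corollary \ref{trivialSchur}), so $\mu$, $\nu$ and $\varpi$ are all trivial and the criterion collapses to a finite group-theoretic check: list the normal abelian subgroups $H\trianglelefteq G$ and compute $\widehat H\rtimes(G/H)$. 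This is exactly what the paper does via Lemmas \ref{abelian} and \ref{semiproductandproduct}: for $(\mathbb{Z}_l\rtimes\mathbb{Z}_m)\times\mathbb{Z}_n$ every admissible $H$ returns $G$ itself, whereas for $G_4$ the choices $H=\mathbb{Z}_q$ and $H=\mathbb{Z}_p\times\mathbb{Z}_q$ return $\widetilde{G_4}$, because the induced action on $\widehat{\mathbb{Z}_q}$ inverts the action on $\mathbb{Z}_q$ --- this is your $c\mapsto-c$, but it has to be produced by this computation rather than asserted.

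A second, smaller point: your stated mechanism for cases $(1)$--$(3)$ and $(5)$--$(6)$, namely that $Rep(D(G))$ admits ``essentially one pointed Lagrangian subcategory up to braided autoequivalences,'' is not accurate as stated. These doubles generally have several Lagrangian subcategories, one for each normal abelian subgroup (e.g.\ four for $(\mathbb{Z}_p\rtimes\mathbb{Z}_q)\times\mathbb{Z}_r$, coming from $1$, $\mathbb{Z}_p$, $\mathbb{Z}_r$ and $\mathbb{Z}_p\times\mathbb{Z}_r$). What is true, and what must actually be checked, is that all of them yield dual groups isomorphic to $G$. So your approach can be completed, but only by performing the same subgroup-by-subgroup computation that Naidu's criterion prescribes; as written, the proposal identifies the right answer without proving it.
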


\begin{mainthm}[Theorem \ref{categoricalM1}, \ref{categoricalM2}]
\begin{description}
\item[$(a)$] If $rq\mid (p-1)$, $r\nmid (q-1)$, then there are four categorically inequivalent classes : $k[\mathbb{Z}_{pqr}]$,  $k[(\mathbb{Z}_{p}\rtimes\mathbb{Z}_{r})\times \mathbb{Z}_{q}]$, $k[(\mathbb{Z}_{p}\rtimes\mathbb{Z}_{q})\times \mathbb{Z}_{r}]$ and $\mathcal{A}_p(q;r)$, where the right actions $\rtimes$ are non-trivial.
\item[$(b)$] If $rq\mid (p-1)$,  $r\mid (q-1)$, then there are $6$ $($if $r=2)$ or $5+\frac{r-1}{2}$ $($if $r>2)$ categorically inequivalent classes.
\end{description}
\end{mainthm}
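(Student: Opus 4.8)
The plan is to convert categorical Morita equivalence into braided equivalence of Drinfeld centers and then to count the resulting classes by means of a few explicit invariants. The key reduction is that two fusion categories are categorically Morita equivalent exactly when their Drinfeld centers are braided equivalent (M\"uger, ENO), and that every category occurring here is group-theoretical, hence categorically Morita equivalent to a pointed category $Vec_G^\omega$ whose center is $Rep(D^\omega G)$. In particular $Rep(H)$ and $Rep(H^*)$ share a center, so $k[G]$ and $k^G$ always lie in one class, and each Hopf algebra contributes the single braided invariant $Rep(D^\omega G)$. Using the classification of groups of order $pqr$ \cite{H,A} and of the semisimple Hopf algebras of this dimension \cite{Na3,ENO2}, I would first list, under each hypothesis, the finitely many pairs $(G,\omega)$ that occur: since $rq\mid(p-1)$ forces the Sylow $p$-subgroup to be normal, every group is $\mathbb{Z}_p\rtimes K$ with $|K|=qr$, and enumerating the admissible actions—together with the extra non-abelian $K=\mathbb{Z}_q\rtimes\mathbb{Z}_r$ available when $r\mid(q-1)$—produces the explicit list.

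The first separation uses the simplest invariant: the number of invertible objects of the center, which for an untwisted double equals $|Z(G)|\cdot|G^{\mathrm{ab}}|$, since the invertibles of $Rep(D(G))$ are the pairs $(g,\chi)$ with $g$ central and $\chi$ a linear character of $G$. Under hypothesis $(a)$ this yields the pairwise distinct values $(pqr)^2,\ q^2r,\ qr^2$ for $\mathbb{Z}_{pqr}$, $(\mathbb{Z}_p\rtimes\mathbb{Z}_r)\times\mathbb{Z}_q$ and $(\mathbb{Z}_p\rtimes\mathbb{Z}_q)\times\mathbb{Z}_r$, and $qr$ for the faithful $\mathbb{Z}_p\rtimes\mathbb{Z}_{qr}$, so these four groups occupy four distinct classes. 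I would then compute the center of $\mathcal{A}_p(q;r)$ and show it is braided equivalent to $Rep(D(\mathbb{Z}_p\rtimes\mathbb{Z}_{qr}))$, so that $\mathcal{A}_p(q;r)$ lies in the same class as the faithful semidirect product and merely represents this fourth class rather than producing a fifth—compatible with the earlier non-existence of nontrivial twists, categorical Morita equivalence being the weaker relation. This gives exactly the four classes of part $(a)$.

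Part $(b)$ is treated the same way. The invertible-object count, refined where needed by the dimension spectrum of the simple objects of the center and by the $T$-matrix of twists, separates the base classes and yields the count $5$. The residual contribution is a family of $\mathcal{A}_p(q;r)$-type categories indexed by the $r-1$ nontrivial homomorphisms $\mathbb{Z}_r\to\mathrm{Aut}(\mathbb{Z}_q)$; on this family the coarse invariants coincide, and I expect the only braided equivalences among their centers to be the order-two identification $i\mapsto -i$ coming from passage to the reverse category. This symmetry is fixed-point-free for odd $r$, giving $\tfrac{r-1}{2}$ further classes, and has a single fixed point when $r=2$, giving one further class and the total $6=5+1$.

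The main obstacle will be the final separation in part $(b)$: showing that the family of centers is pairwise braided-inequivalent apart from the reversal $i\mapsto -i$. The invertible count and dimension spectrum no longer distinguish these members, so one must extract finer modular data—the precise $T$-matrix together with the orbit structure of Lagrangian subcategories under the relevant automorphisms, in the sense of Naidu--Nikshych—and prove that two of these centers are braided equivalent if and only if their parameters agree up to sign. Establishing that this is the sole collapse, so that the family contributes exactly $\tfrac{r-1}{2}$ classes, is the delicate computational core of the argument.
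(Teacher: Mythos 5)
Your overall strategy---using braided equivalence of Drinfeld centers as a complete invariant and separating classes by the count of invertible objects $|Z(G)|\cdot|G^{\mathrm{ab}}|$ of $Rep(D(G))$---is legitimate and genuinely different from the paper, which instead invokes Naidu's explicit classification of categorical Morita equivalences between pointed categories (Proposition \ref{GroupMorita}): there $G'$ is Morita equivalent to $G$ iff $G'\cong\widehat{H}\rtimes_\nu(G/H)$ for a normal abelian $H\le G$ with vanishing obstruction, and since all relevant Schur multipliers vanish (Corollary \ref{trivialSchur}) the list of candidates is finite and computable in both directions at once. Your invertible-object invariant does correctly separate all the ``base'' classes in both parts (the values $(pqr)^2$, $p^2r$, $q^2r$, $qr^2$, $qr$, $r$ are pairwise distinct for $r<q<p$), and the identification of $\mathcal{A}_p(q;r)$ with the class of $k[\mathbb{Z}_p\rtimes\mathbb{Z}_{qr}]$ follows at once from $Rep(\mathcal{A}_p(q;r))\cong\mathcal{C}(\mathbb{Z}_p\rtimes\mathbb{Z}_{qr},1,\mathbb{Z}_r,1)$, as you note. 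So part $(a)$ is essentially complete.

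Part $(b)$, however, has a genuine gap exactly where you flag one, and a mischaracterization that obscures it. The residual family is not a family of ``$\mathcal{A}_p(q;r)$-type'' abelian-extension Hopf algebras: by Natale's classification (Proposition \ref{fusioncategory}) the only non-trivial Hopf algebras are still $\mathcal{A}_p(q;r)$ and its dual, both landing in the class of $k[\mathbb{Z}_p\rtimes\mathbb{Z}_{qr}]$. What remains are the $r-1$ group algebras $k[G_4^{(n)}]$ with $G_4^{(n)}=(\mathbb{Z}_p\times\mathbb{Z}_q)\rtimes\mathbb{Z}_r$ acting faithfully on both factors, i.e.\ pointed categories $Vec_{G_4^{(n)}}$. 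For these you must prove two things you only ``expect'': (i) that $n\mapsto -n\pmod r$ really is realized by a Morita equivalence --- this does not come from passage to the reverse category (which gives nothing new for a pointed category) but from the dual category with respect to the module category attached to the normal subgroup $H=\mathbb{Z}_q$ (or $\mathbb{Z}_p\times\mathbb{Z}_q$), where the contragredient action on $\widehat{H}$ inverts the exponent; and (ii) that no further collapses occur. Your proposed route to (ii) via $T$-matrices and Lagrangian subcategories is plausible but is precisely the ``delicate computational core'' you leave undone, whereas Naidu's criterion disposes of it immediately: the complete list of groups Morita equivalent to $G_4^{(n)}$ is $\{\widehat{H}\rtimes(G/H)\}$ over its normal abelian subgroups $\mathbb{Z}_p$, $\mathbb{Z}_q$, $\mathbb{Z}_p\times\mathbb{Z}_q$, which yields only $G_4^{(\pm n)}$. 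Without (i) and (ii) the count $5+\tfrac{r-1}{2}$ (resp.\ $6$) is asserted, not proved.
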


The paper is organized as follow. In section \ref{Preliminaries}, we first recall some basic definitions and properties that we will used throughout, like group-theoretical fusion categories, cocycle deformations, metacyclic groups, etc. In section \ref{Semsimplepqr}, we will summarize the classification of  semisimple Hopf algebras of dimension $pqr$. In section \ref{monMorita}, we compute the Galois objects of all semisimple Hopf algebras of dimension $pqr$, and determine their monoidally Morita equivalent classes. In section \ref{catMorita}, we determine the categorically Morita equivalent classes of semisimple Hopf algebras of dimension $pqr$.
\section{Preliminaries} \label{Preliminaries}
In this section,  we will recall some notations like pointed fusion categories, group-theoretical fusion categories and their module categories, and Galois objects, abelian extensions, metacyclic groups, etc. We refer \cite{O1,O2,ENO1,ENO2,EGNO} for the basic definitions and properties of fusion categories, module categories, Drinfeld centers; for Galois objects and $2$-cocycle deformations see \cite{S1}, for abelian extensions see \cite{K,Na1}, for metacyclic groups and their Schur multipliers see \cite{A, B, Ro}.
\subsection{Group-theoretical fusion categories}
For a  fusion category $\mathcal{C}$, abelian category $\mathcal{M}$ is a left $\mathcal{C}$-module category means that there is  a tensor functor $\Psi:\mathcal{C}\rightarrow End(\mathcal{M})$, $X\mapsto \Psi_X$, where $\Psi_X(M):=X\otimes M$, $\forall X\in \mathcal{C}, M\in \mathcal{M}$. $\mathcal{M}$ is an indecomposable module category if it can not be decomposed as the direct of two non-trivial module subcategories. We denote $\mathcal{C}^*_\mathcal{M}:=Fun_\mathcal{C}(\mathcal{M},\mathcal{M})$, the set of $\mathcal{C}$-module functors from $\mathcal{M}$ to $\mathcal{M}$.

A fusion categories is pointed if all the simple objects are invertible. Any pointed fusion category
$\mathcal{C}$ is equivalent to a finite group $G$-graded vector space $Vec^\omega_G$, where $\omega\in H^3(G,k^{*})$ is a $3$-cocycle, with the association isomorphism $(g\otimes h)\otimes l\mapsto g\otimes(h\otimes l) $ given by the cocycle $\omega(g,h,l)$.

\begin{defi}Two fusion categories $\mathcal{C}, \mathcal{D}$ are categorical Morita equivalent if there exists an semisimple indecomposable  $\mathcal{C}$-module category $\mathcal{M}$ s.t $\mathcal{C}^*_{\mathcal{M}}\cong \mathcal{D}^{op}$ as fusion categories, where $\D^{op}$ is the opposite fusion category of $\D$.
\end{defi}
\begin{defi}\label{catgoricalHopfalgebra}Two semisimple Hopf algebras $H$, $L$ are categorically Morita equivalent if their finite-dimensional representation categories $Rep(H)$ and $Rep(L)$ are categorically Morita equivalent.
\end{defi}
\begin{thm}[\cite{ENO2}]\label{Drinfeldcenter}Fusion categories $\mathcal{C}$ and  $\mathcal{D}$ are categorically Morita equivalent via $\mathcal{C}$-module category $\mathcal{M}$ iff their Drinfeld centers $Z(\mathcal{C})$, $Z(\mathcal{D})$ are braided equivalent.
\end{thm}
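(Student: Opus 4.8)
The plan is to deduce both directions from two structural facts about the Drinfeld center: its invariance under categorical Morita equivalence, and the reconstruction of a fusion category from a Lagrangian algebra in its center. Throughout I would use the identification of $Z(\mathcal{C})$ with the category $Fun_{\mathcal{C}|\mathcal{C}}(\mathcal{C},\mathcal{C})$ of $\mathcal{C}$-bimodule endofunctors of the regular bimodule $\mathcal{C}$, together with the fact that a semisimple indecomposable module category $\mathcal{M}$ over the fusion category $\mathcal{C}$ is automatically an invertible $(\mathcal{C},\mathcal{C}^*_{\mathcal{M}})$-bimodule category.

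For the forward direction, assume $\mathcal{C}^*_{\mathcal{M}}\cong\mathcal{D}^{op}$. First I would establish $Z(\mathcal{C})\simeq Z(\mathcal{C}^*_{\mathcal{M}})$ as braided fusion categories: conjugation by the invertible bimodule $\mathcal{M}$ sends a $\mathcal{C}$-bimodule endofunctor of $\mathcal{C}$ to a $\mathcal{C}^*_{\mathcal{M}}$-bimodule endofunctor of $\mathcal{C}^*_{\mathcal{M}}$, and this is a monoidal equivalence of the two centers; one then checks that it intertwines the canonical half-braidings, so that it is braided. It remains to replace $\mathcal{C}^*_{\mathcal{M}}=\mathcal{D}^{op}$ by $\mathcal{D}$, for which I would use the braided equivalence $Z(\mathcal{E}^{op})\simeq Z(\mathcal{E})$ realized on objects by inverting the half-braiding, $(X,\gamma)\mapsto(X,\gamma^{-1})$. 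Composing the two equivalences yields the desired braided equivalence $Z(\mathcal{C})\simeq Z(\mathcal{D})$.

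For the converse, suppose $G\colon Z(\mathcal{C})\to Z(\mathcal{D})$ is a braided equivalence. Let $I_{\mathcal{D}}\colon\mathcal{D}\to Z(\mathcal{D})$ be the adjoint of the forgetful functor and $A_{\mathcal{D}}:=I_{\mathcal{D}}(\mathbf{1})$ the canonical Lagrangian algebra, so that $Mod_{Z(\mathcal{D})}(A_{\mathcal{D}})\simeq\mathcal{D}$ as fusion categories. Since a braided equivalence preserves connected étale algebras and their Frobenius--Perron dimensions, $A:=G^{-1}(A_{\mathcal{D}})$ is again a Lagrangian algebra in $Z(\mathcal{C})$, and $G$ induces $Mod_{Z(\mathcal{C})}(A)\simeq\mathcal{D}$. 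Now I would invoke the bijection between Lagrangian algebras in $Z(\mathcal{C})$ and semisimple indecomposable module categories over $\mathcal{C}$, under which the algebra $A$ produces a module category $\mathcal{M}$ with $\mathcal{C}^*_{\mathcal{M}}\simeq Mod_{Z(\mathcal{C})}(A)^{op}$; combined with the previous equivalence this gives $\mathcal{C}^*_{\mathcal{M}}\simeq\mathcal{D}^{op}$, exhibiting the categorical Morita equivalence.

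The main obstacle, and the technical heart of the argument, is the careful bookkeeping of braidings and of the $(-)^{op}$ conventions. One must verify that the conjugation equivalence of the forward direction is genuinely braided rather than merely monoidal or braided only up to reversal, and that the Lagrangian-algebra correspondence outputs $\mathcal{D}^{op}$ and not $\mathcal{D}$, so that both directions remain compatible with the convention $\mathcal{C}^*_{\mathcal{M}}\cong\mathcal{D}^{op}$ used in the definition. Establishing the bijection between Lagrangian algebras and module categories together with the identity $\mathcal{C}^*_{\mathcal{M}}\simeq Mod_{Z(\mathcal{C})}(A)^{op}$ is where I would rely most heavily on the structure theory of étale algebras in non-degenerate braided fusion categories.
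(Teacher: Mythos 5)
The paper does not prove this statement: it is quoted from \cite{ENO2} (with the two directions going back to Schauenburg/M\"uger and to the Lagrangian-algebra argument of Etingof--Nikshych--Ostrik and Davydov--M\"uger--Nikshych--Ostrik), so there is no in-paper argument to compare against. Your proposal reconstructs exactly that standard proof --- the forward direction via invariance of the Drinfeld center under conjugation by the invertible $(\mathcal{C},\mathcal{C}^*_{\mathcal{M}})$-bimodule $\mathcal{M}$, the converse by transporting the canonical Lagrangian algebra of $Z(\mathcal{D})$ along the braided equivalence and invoking the correspondence between Lagrangian algebras in $Z(\mathcal{C})$ and indecomposable semisimple $\mathcal{C}$-module categories --- and it is sound. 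The only caveat is the one you flag yourself: the two auxiliary equivalences $Z(\mathcal{C})\simeq Z(\mathcal{C}^*_{\mathcal{M}})$ and $Z(\mathcal{E}^{op})\simeq Z(\mathcal{E})$ cannot be asserted to be ``genuinely braided'' independently of one another, since in the usual normalization each step reverses the braiding (e.g.\ $Z(\mathcal{E}^{op})\simeq Z(\mathcal{E})^{rev}$); what saves the argument is that the two reversals occur in a pair and cancel, so the composite lands on $Z(\mathcal{D})$ rather than $Z(\mathcal{D})^{rev}$.
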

\begin{rmk}Indeed, Theorem \ref{Drinfeldcenter} is true for any finite tensor category $\mathcal{C}$, see \cite{DN,EGNO} for details.
\end{rmk}

A fusion category $\mathcal{D}$ is $\textit{group-theoretical}$ if it is categorically Morita equivalent to a pointed fusion category $Vec^\omega_G$, that is, there exists indecomposable semisimple $\mathcal{D}$-module category $\mathcal{M}$ s.t $\mathcal{D}^{op}_\mathcal{M}\cong Vec^\omega_G$.

Ostrik  \cite{O2} proved that for pointed  fusion category $Vec^\omega_G$, every exact indecomposable module category $\mathcal{M}$ is equivalent to the representation category of an algebra $A=k_\beta[L]\in Vec^\omega_G$, where $L\subseteq G$ is a subgroup and $\beta:L\times L\rightarrow k^*$ is a $2$-cochain satisfying $d^2\alpha=\omega_{\mid F\times F\times F}$. And we denote the module category that is determined by pair $(L,\beta)$ as $\mathcal{M}(L,\beta)$, the group-theoretical fusion category $(Vec^\omega_G)^*_{\mathcal{M}(L,\beta)}$ is denoted by $\mathcal{C}(G,\omega,L,\beta)$.

Let us denote $Mod(\mathcal{C})$ the $2$-category of left exact $\mathcal{C}$-module categories. It is shown that there a $2$-category equivalence  $Mod(\mathcal{C})\cong Mod((\mathcal{C}^{*}_\mathcal{M})^{op})$, $\mathcal{N}\mapsto Fun_\mathcal{C}(\mathcal{M},\mathcal{N})$, for any $\mathcal{C}$-module category $\mathcal{N}$ \cite{EGNO}, so every semsimple indecomposable module category of the group-theoretical fusion category is also determined \cite{O2}.

A 2-cocycle $\alpha\in H^2(G,k^*)$ is non-degenerate if the twisted group algebra $k_\alpha[G]$ is a simple matrix algebra. As said in the introduction, a fiber functor on $\mathcal{C}$ is nothing but a rank one $\mathcal{C}$-module category.

\begin{thm}[\cite{O2},\cite{Na2}] \label{fiberfunctor}Fiber functors on $\mathcal{C}(G, \omega, F,\alpha)$
correspond to pairs $(L,\beta)$, where $L$ is a subgroup of $G$ and $\beta$ is a $2$-cocycle on $L$,
such that the following conditions are satisfied:
\begin{enumerate}
  \item The class of $\omega|_{ L\times L\times L}$ is trivial;
  \item $G = LF$; and
  \item The class of the $2$-cocycle $\alpha|_{F\cap L}\beta^{-1}|_{L\cap F}$ is non-degenerate.
\end{enumerate}
Two fiber functors $(L,\beta),(L{'},\beta{'})$ are isomorphic iff
there exists an element $g\in G$ such that $L{'}=gLg^{-1}$, and the cohomology class of
the two cocycle $\beta{'}^{-1}\beta^g\Omega_g$ is trivial in $H^2(L, k^{*})$, where $\beta^g(h,l):=\beta(ghg^{-1},glg^{-1})$ for $h,l\in L$ and $\Omega_g(a,b):=\frac{\omega(gag^{-1},gbg^{-1},g)\omega(g,a,b)}{\omega(gag^{-1},g,b)}$ for $a, b\in G$.
\end{thm}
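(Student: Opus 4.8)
The plan is to reduce the classification of fiber functors to the classification of module categories over the pointed category $Vec^\omega_G$, and then to isolate the rank-one condition. Recall from the discussion above that a fiber functor on $\mathcal{C} := \mathcal{C}(G,\omega,F,\alpha)$ is the same datum as a rank one indecomposable $\mathcal{C}$-module category. Since $\mathcal{C} = (Vec^\omega_G)^*_{\mathcal{M}(F,\alpha)}$, the $2$-category equivalence $Mod(Vec^\omega_G)\cong Mod(\mathcal{C}^{op})$, $\mathcal{N}\mapsto Fun_{Vec^\omega_G}(\mathcal{M}(F,\alpha),\mathcal{N})$, recalled earlier lets me transport the problem to $Vec^\omega_G$. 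By Ostrik's description, every indecomposable exact $Vec^\omega_G$-module category is of the form $\mathcal{M}(L,\beta)$ with $L\leq G$ and $\beta\in C^2(L,k^*)$ satisfying $d^2\beta=\omega|_{L\times L\times L}$; the existence of such a $\beta$ is exactly the requirement that $[\omega|_{L\times L\times L}]$ be trivial, which is condition $(1)$. Thus a fiber functor corresponds to a pair $(L,\beta)$ for which the $\mathcal{C}^{op}$-module category $Fun_{Vec^\omega_G}(\mathcal{M}(F,\alpha),\mathcal{M}(L,\beta))$ has rank one (note that passing between $\mathcal{C}$ and $\mathcal{C}^{op}$ preserves ranks of module categories).

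The heart of the argument is then a rank computation for this functor category. I would decompose $Fun_{Vec^\omega_G}(\mathcal{M}(F,\alpha),\mathcal{M}(L,\beta))$ over the double cosets $F\backslash G/L$: for a representative $g$ the contribution is the set of irreducible $\mu_g$-projective representations of the intersection $F\cap gLg^{-1}$, where the $2$-cocycle $\mu_g$ is assembled from $\alpha$, the conjugate $\beta^g$, and the obstruction term $\Omega_g$ coming from $\omega$. The total rank is the sum over double cosets of the number of such irreducibles. Requiring rank one forces, first, that there is a single double coset, i.e. $G=FL=LF$, which is condition $(2)$; and second, that over this coset (which I may take to be represented by $g=e$, so that $\Omega_e$ is trivial and $\mu_e=\alpha|_{F\cap L}\beta^{-1}|_{F\cap L}$) there is exactly one irreducible projective representation of $F\cap L$. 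The latter holds precisely when the twisted group algebra $k_{\mu_e}[F\cap L]$ is a full matrix algebra, i.e. when $\alpha|_{F\cap L}\beta^{-1}|_{L\cap F}$ is non-degenerate; this is condition $(3)$.

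For the isomorphism criterion I would use that, under the same $2$-equivalence, isomorphism classes of fiber functors correspond to equivalence classes of the module categories $\mathcal{M}(L,\beta)$ over $Vec^\omega_G$. Two such module categories are equivalent exactly when the defining algebras $k_\beta[L]$ and $k_{\beta'}[L']$ are carried into one another by an inner autoequivalence of $Vec^\omega_G$, i.e. by conjugation by some $g\in G$; this gives $L'=gLg^{-1}$, and comparing the transported cochain $\beta^g$ with $\beta'$ produces the discrepancy $\beta'^{-1}\beta^g\Omega_g$, whose triviality in $H^2(L,k^*)$ is the stated condition. Here $\Omega_g$ is the standard transgression factor measuring the failure of $\omega$ to be invariant under conjugation by $g$, exactly as in its definition in the statement.

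The main obstacle is the cocycle bookkeeping in the rank computation: one must keep precise track of how $\omega|_{L\times L\times L}$, the trivializing cochain, the associativity constraints on $\alpha$ and $\beta$, and the conjugation/transgression factor $\Omega_g$ combine into the single cocycle $\mu_g$ on each intersection group. It is precisely at the level of the equivalence relation that Ostrik's original formulation required the correction supplied by Natale, so the delicate point is to verify that the factor $\Omega_g$ (and not merely the naive ratio $\beta^g(\beta')^{-1}$) governs isomorphism. This in turn demands computing the module associativity constraints under conjugation rather than manipulating the $2$-cocycles formally, and it is where I expect the argument to require the most care.
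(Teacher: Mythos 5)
This theorem is quoted in the paper from \cite{O2} and \cite{Na2} without proof, so there is no in-paper argument to compare against; your sketch reproduces the standard argument from those references. The reduction of fiber functors to rank-one module categories, the transport to $Vec^\omega_G$ via the $2$-equivalence, the double-coset/twisted-group-algebra rank count yielding conditions $(1)$--$(3)$, and Natale's corrected equivalence criterion with the transgression factor $\Omega_g$ are exactly the right ingredients, correctly assembled.
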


\subsection{Monoidal Morita equivalence}
\begin{defi}Two finite-dimensional Hopf algebras $H, L$ are monoidally Morita equivalent if their finite-dimensional comodule categories $\mathcal{M}^H$ and $\mathcal{M}^L$ are tensor equivalent.
\end{defi}
\begin{defi}For Hopf algebra $H$, a right $H$-comodule algebra $A$ is a Galois object if the Galois morphism $\beta:A\otimes A\rightarrow A\otimes H$ is an isomorphism, and $A^{co H}=k$, where $\beta(a\otimes b)=ab_{(0)}\otimes b_{(1)}$, $\forall a, b\in A$.
\end{defi}
For a right $H$-Galois object $A$, we can define a new Hopf algebra $L:=(A^{op}\otimes A)^{co H}$, which is unique up to isomorphism of Hopf algebras, see \cite{S1} for details.\begin{defi}For a Hopf algebra $H$, a Drinfeld twist $J\in H\otimes H$ is an invertible element satisfies $$(\Delta\otimes id_H)(J)(J\otimes 1)=(id_H\otimes \Delta) (J)(1\otimes J),\quad (\varepsilon\otimes id_H)J=(id_H\otimes\varepsilon)J=1.$$
\end{defi}
\begin{defi}For Hopf algebra $H$, a cocycle $\sigma:H\otimes H\rightarrow k$ is a convolution invertible morphism satisfies: $\forall g, h, l\in H$
\begin{gather*}
\sigma(1,h)=\varepsilon(h)=\sigma(h,1),\\
 \sigma(g_{(1)},h_{(1)})\sigma(g_{(2)}h_{(2)},l)=\sigma(h_{(1)},l_{(1)})\sigma(g,h_{(2)}l_{(2)}).
\end{gather*}
\end{defi}
 For Drinfeld twist $J$ and cocycle $\sigma$, we can define new Hopf algebras $H^J$ and $H^\sigma$ by changing their comultiplication and multiplication respectively, called twist deformation and $2-$cocycle deformation, see \cite{Ra} for the explicit constructions and the duality between them.
\begin{thm}[\cite{S1}]\label{Drinfeldtwist}Two finite-dimensional Hopf algebras $H, L$ are monoidally Morita equivalent iff there exists an $(L,H)$-biGalois object iff there exists a cocycle $\sigma$ satisfies $H^\sigma\cong L$ as Hopf algebras. Equivalently, $Rep(H^*)\cong Rep(L^*)$ iff $H^*\cong (L^*)^J$ for some Drinfeld twist.
\end{thm}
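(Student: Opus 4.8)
The plan is to prove the three characterizations by passing through biGalois objects, which sit between the categorical data (a monoidal equivalence of comodule categories) and the algebraic data (a cocycle deformation), and then to obtain the dual module-category statement by the canonical identification $\mathcal{M}^H\cong Rep(H^*)$. \textbf{From a biGalois object to a monoidal equivalence.} Given an $(L,H)$-biGalois object $A$, I would define the cotensor-product functor determined by $A$, written $F:=-\,\square_H\,A:\mathcal{M}^H\to\mathcal{M}^L$ up to the left/right comodule convention, using the right $H$-comodule and left $L$-comodule structures of $A$. The monoidal constraint $F(V)\otimes F(W)\cong F(V\otimes W)$ is built from the Galois isomorphism $A\otimes A\cong A\otimes H$ (available because $A^{co H}=k$) together with the bicomodule-algebra axioms, and coherence is checked against the Hopf axioms. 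To see that $F$ is an equivalence I would invoke the inverse biGalois object $A^{-1}\in \mathrm{BiGal}(H,L)$ and the cotensor identities $A\,\square_H\,A^{-1}\cong L$, $A^{-1}\,\square_L\,A\cong H$, which exhibit a quasi-inverse.

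For the converse, given a monoidal equivalence $\Phi:\mathcal{M}^H\xrightarrow{\sim}\mathcal{M}^L$, I would reconstruct $A$ by Tannakian means. Both categories carry their forgetful fiber functors $\omega_H,\omega_L$ to $Vec$, and $\omega_L\circ\Phi$ is a second fiber functor on $\mathcal{M}^H$. Set $A$ to be the coend $\int^{V\in\mathcal{M}^H}\omega_H(V)^*\otimes(\omega_L\circ\Phi)(V)$, the object of ``internal natural transformations'' comparing the two fiber functors; it acquires a right $H$-comodule structure from $\omega_H$ and a left $L$-comodule structure from $\omega_L$, while the monoidality of $\Phi$ makes it an algebra. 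The point is that both fiber functors are faithful exact tensor functors, which forces the two Galois maps to be isomorphisms, so $A$ is an $(L,H)$-biGalois object; I expect this verification, together with checking that the construction is inverse to the cotensor functor above, to be the technical heart of the argument.

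It remains to link biGalois objects with cocycle deformations and to dualize. A convolution-invertible $2$-cocycle $\sigma$ on $H$ produces the twisted algebra ${}_\sigma H$, a \emph{cleft} $(H^\sigma,H)$-biGalois object, so $H^\sigma\cong L$ immediately yields a biGalois object and hence a monoidal equivalence; conversely a biGalois object equipped with a convolution-invertible $H$-colinear section returns a cocycle $\sigma$ with $H^\sigma\cong L$. The main obstacle I anticipate is exactly this \emph{cleftness gap}: a priori a biGalois object need not admit such a section, so to close the implication $\mathcal{M}^H\cong\mathcal{M}^L\Rightarrow H^\sigma\cong L$ one must argue that the biGalois objects in play can be taken cleft; in the finite-dimensional setting this is where normal-basis and integral arguments (and, for the paper's purposes, semisimplicity) enter. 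Finally, under $\mathcal{M}^H\cong Rep(H^*)$ a cocycle deformation on $H$ is dual to a Drinfeld twist on $H^*$, with $(H^\sigma)^*\cong(H^*)^{J}$ for the element $J\in H^*\otimes H^*$ dual to $\sigma$; since twisting is a symmetric relation, $H^\sigma\cong L$ is equivalent to $H^*\cong (L^*)^{J'}$ for a suitable twist, giving the stated reformulation $Rep(H^*)\cong Rep(L^*)\iff H^*\cong (L^*)^{J'}$.
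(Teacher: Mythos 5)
The paper offers no proof of this statement: it is quoted verbatim from Schauenburg's work \cite{S1} (together with the standard duality between $2$-cocycles and Drinfeld twists), and your outline is precisely the argument of that reference --- cotensoring along a biGalois object in one direction, Tannakian reconstruction of the bicomodule algebra from the second fiber functor in the other, and then closing the ``cleftness gap'' to pass from biGalois objects to cocycle deformations. Your identification of that gap is the right technical point; note only that it is resolved by finite-dimensionality alone (the Kreimer--Takeuchi theorem guarantees every Galois object over a finite-dimensional Hopf algebra is cleft), so semisimplicity is not actually needed there.
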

Therefore, by Theorem \ref{Drinfeldcenter} and Theorem \ref{Drinfeldtwist}, we have
\begin{cor}\label{Drinfelddouble}Semisimple Hopf algebras $H, L$ are categorically Morita equivalent iff $Rep(D(H))\cong Rep(D(L))$ as braided fusion categories iff $D(H)\cong D(L)^J$ as Hopf algebras, $J$ is a Drinfeld twist.
\end{cor}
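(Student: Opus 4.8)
The plan is to concatenate the two cited theorems with the standard identification of the Drinfeld center of a representation category with the representation category of the Drinfeld double. First I would unfold Definition \ref{catgoricalHopfalgebra}: by definition $H$ and $L$ are categorically Morita equivalent exactly when the fusion categories $Rep(H)$ and $Rep(L)$ are categorically Morita equivalent. Applying Theorem \ref{Drinfeldcenter} with $\mathcal{C}=Rep(H)$ and $\mathcal{D}=Rep(L)$, this is equivalent to a braided equivalence of Drinfeld centers $Z(Rep(H))\cong Z(Rep(L))$.

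Next I would invoke the standard fact that for a finite-dimensional Hopf algebra $H$ there is a braided tensor equivalence $Z(Rep(H))\cong Rep(D(H))$, where $D(H)$ denotes the Drinfeld double equipped with its canonical $R$-matrix and the braiding on $Rep(D(H))$ is the one induced by that $R$-matrix. Substituting this identification (and the corresponding one for $L$) into the previous step yields the first stated equivalence: $H$ and $L$ are categorically Morita equivalent iff $Rep(D(H))\cong Rep(D(L))$ as braided fusion categories.

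For the second equivalence I would pass through Theorem \ref{Drinfeldtwist}. Relabelling $A:=H^{*}$ and $B:=L^{*}$ in the last sentence of that theorem gives, for arbitrary finite-dimensional Hopf algebras, the reconstruction statement that a tensor equivalence $Rep(A)\cong Rep(B)$ holds iff $A\cong B^{J}$ as Hopf algebras for some Drinfeld twist $J$. Taking $A=D(H)$ and $B=D(L)$, a braided equivalence $Rep(D(H))\cong Rep(D(L))$ is in particular a tensor equivalence, so the direction from the braided equivalence to the Hopf isomorphism $D(H)\cong D(L)^{J}$ is immediate. For the converse I would use that $D(L)$ is quasitriangular, so twisting by $J$ produces the quasitriangular structure $R^{J}=J_{21}RJ^{-1}$ on $D(L)^{J}$, and that this gauge transformation induces a braided equivalence $Rep(D(L)^{J})\cong Rep(D(L))$; composing it with the tensor equivalence supplied by $D(H)\cong D(L)^{J}$ promotes that equivalence to a braided one.

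The step I expect to be the main obstacle is precisely this promotion in the second equivalence, namely the careful bookkeeping of the braided-versus-tensor distinction. Theorem \ref{Drinfeldtwist} only controls \emph{tensor} equivalences, while Theorem \ref{Drinfeldcenter} produces a \emph{braided} equivalence of centers, so the real content is checking that the Hopf algebra isomorphism $D(H)\cong D(L)^{J}$ furnished by Schauenburg's reconstruction can be chosen to intertwine the canonical $R$-matrix of $D(H)$ with the twisted $R$-matrix $J_{21}RJ^{-1}$ of $D(L)^{J}$, so that the induced tensor equivalence genuinely respects the braidings. Once this compatibility is established, the remainder is a formal composition of the cited statements.
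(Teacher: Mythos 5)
Your proposal matches the paper's argument, which consists entirely of the remark ``by Theorem \ref{Drinfeldcenter} and Theorem \ref{Drinfeldtwist}'' together with the standard identification $Z(Rep(H))\cong Rep(D(H))$; no further proof is given there. The obstacle you flag is real and is passed over in silence by the paper: Theorem \ref{Drinfeldtwist} only controls \emph{tensor} equivalences, so deducing a \emph{braided} equivalence from a Hopf algebra isomorphism $D(H)\cong D(L)^J$ requires checking that the isomorphism intertwines the canonical $R$-matrix with the twisted one $J_{21}RJ^{-1}$; fortunately the corollary is only ever applied in the forward direction (categorical Morita equivalence implies the existence of such a twist, as in Lemma \ref{abelian}), where this issue does not arise.
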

Combining the conclusions of \cite{S1} with \cite{Ul}, we have:
\begin{pro} \label{Oneonecorrespondence}
For finite-dimensional Hopf algebra H, there is a bijective correspondence between the following sets:
\begin{enumerate}
  \item The set of right Galois objects of $H^{*}$;
  \item The set of fiber functors on $Rep(H)$;
  \item The set of Drinfeld twists on H;
  \item The set of 2-cocycles of $H^*$.
\end{enumerate}
\end{pro}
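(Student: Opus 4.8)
The plan is to realize the four sets as the successive terms of a single chain of bijections, each link of which is either quoted directly from the results recalled above or is an elementary finite-dimensional duality. I would fix the finite-dimensional Hopf algebra $H$ throughout and read all four sets as sets of the appropriate equivalence classes: Galois objects up to isomorphism of comodule algebras, fiber functors up to monoidal natural isomorphism, Drinfeld twists up to gauge equivalence, and $2$-cocycles up to cohomology. The first organizational task is to record, at each step, which equivalence relation is being matched, so that the correspondence descends to the quotients.

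The first link, $(1)\leftrightarrow(2)$, is Ulbrich's theorem \cite{Ul} applied not to $H$ but to $H^{*}$: right Galois objects of $H^{*}$ are in bijection with fiber functors on $\mathcal{M}^{H^{*}}\cong Rep((H^{*})^{*})\cong Rep(H)$, where I use that $H$ is finite-dimensional so that $(H^{*})^{*}\cong H$. The second link, $(2)\leftrightarrow(3)$, is exactly the statement recalled in the introduction following Schauenburg \cite{S1}: a fiber functor on $Rep(H)$ is encoded by a monoidal structure on the forgetful functor, and two such structures differ precisely by a Drinfeld twist $J\in H\otimes H$, gauge-equivalent twists yielding monoidally isomorphic functors; conversely $J$ produces the twisted Hopf algebra $H^{J}$ together with the tensor equivalence $Rep(H^{J})\cong Rep(H)$.

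The remaining link $(3)\leftrightarrow(4)$ is the twist–cocycle duality referenced in \cite{Ra}. Here I would use the canonical identification $H\otimes H\cong (H^{*}\otimes H^{*})^{*}$, valid in finite dimension, under which a twist $J\in H\otimes H$ is the same datum as a linear functional $\sigma\colon H^{*}\otimes H^{*}\to k$. I would then verify that the twist axioms $(\Delta\otimes id)(J)(J\otimes 1)=(id\otimes\Delta)(J)(1\otimes J)$ together with the counit normalizations transcribe, after pairing with elements of $H^{*}$ and using that the coproduct of $H^{*}$ is dual to the product of $H$, into precisely the $2$-cocycle axioms for $\sigma$ listed in the definition above; cohomologous cocycles then correspond to gauge-equivalent twists.

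The three displayed bijections are individually routine once the relevant theorems are cited. The only place requiring genuine care, and hence the main obstacle, is the bookkeeping of equivalence relations along the whole chain: confirming that ``isomorphic Galois objects'', ``monoidally isomorphic fiber functors'', ``gauge-equivalent twists'' and ``cohomologous cocycles'' are carried to one another at every step, so that composing the three maps produces a single well-defined bijection of the four quotient sets rather than four superficially different parametrizations. Once that compatibility is checked, the proposition follows by composition.
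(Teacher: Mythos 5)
Your proposal is correct and follows essentially the same route as the paper, which obtains the proposition by simply combining Ulbrich's bijection (applied to $H^{*}$, using $(H^{*})^{*}\cong H$) with Schauenburg's twist correspondence and the twist--cocycle duality of \cite{Ra}. Your additional bookkeeping of the equivalence relations at each link is a sensible elaboration of what the paper leaves implicit.
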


\subsection{Abelian extensions}
Let $\Gamma,~F$ be finite groups, we say they  form a matched pair $(F,\Gamma,\lhd,\rhd)$, if there are actions $\lhd: F\times\Gamma\rightarrow F$ and $\rhd: \Gamma\times F \rightarrow \Gamma$
satisfying for all $x,~y\in F,~s,~t\in \Gamma$, we have
$$s\rhd (xy) = (s \rhd x)((s \lhd x) \rhd y), \quad (st)\lhd x = (s \lhd (t \rhd x))(t \lhd x).$$

For a matched pair $(F,\Gamma,\lhd,\rhd)$, the Cartesian product $F\times \Gamma$ admits a group structure, with product given by:
\begin{gather*}
(x, s)(y, t)=(x(s\rhd y), (s\lhd y)t).
\end{gather*}
 This group denoted by $F\bowtie\Gamma$, and called the $\textit{bicrossed product}$ of $F,\Gamma$.

Moreover, we have an abelian extension $k\rightarrow k^{\Gamma}\rightarrow k^{\Gamma}\leftidx{^\tau}{\#_{\sigma}k[F]}\rightarrow k[F]\rightarrow k$, with product and coproduct given by
\begin{gather*}
(e_g\#x)(e_h\#y) =\delta_{g\lhd x,h} \sigma_g(x, y)e_g\#xy,\\
\Delta(e_g\#x) =\sum_{st=g}\tau_x(s,t)e_s\#(t \rhd x)\otimes e_t\#x.
\end{gather*}
where $\sigma=\sum\limits_{g\in \Gamma}\sigma_g\delta_g :F\times F\rightarrow (k^\Gamma)^{\times}$ is a normalized $2$-cocycle , $\delta_g$ is the orthogonal primitive idempotent satisfying $\langle\delta_g,h\rangle=\delta_{g,h}$, $\tau=\sum\limits_{s\in \Gamma}\sigma_x\delta_x :\Gamma\times\Gamma\rightarrow (k^F)^{\times}$ is a normalized $2$-cocycle, $\sigma_g(x,y):=\sigma(x,y)(g)$, $\tau_x(s,t):=\tau(s,t)(x)$. And the cocycles $\tau, \sigma$  subject to the compatible conditions : $\forall s,t\in \Gamma, x,y\in F$,
\begin{gather*}
\sigma_1(s,t)=1,\qquad\tau_1(x,y)=1,\\
\sigma_{ts}(x,y)\tau_{xy}(t,s)=\tau(t,s)\tau(t\lhd(s\rhd x),s\lhd x)\sigma_{t}(s\rhd x,(s\lhd x)\rhd y)\sigma_{s}(x,y).
\end{gather*}

We call an abelian extension $k\rightarrow k^{\Gamma}\rightarrow k^{\Gamma}\leftidx{^\tau}{\#_{\sigma}k[F]}\rightarrow k[F]\rightarrow k$ is splitting, if $\tau=1$ and $\sigma=1$.

Given a matched pair $(F,\Gamma,\lhd,\rhd)$, the set of equivalent classes of extensions $k\rightarrow k^{\Gamma}\rightarrow k^{\Gamma}\leftidx{^\tau}{\#_{\sigma}k[F]}\rightarrow k[F]\rightarrow k$ giving rise to these actions is denoted by $Opext(k^\Gamma,k[F])$, see \cite{Na1,K} for more details.

By a result of Kac \cite{K}, there is an exact sequence
\begin{align*}
& 0\rightarrow H^1(F\bowtie\Gamma,k^{*})\xrightarrow{res}H^1(F,k^{*})\oplus H^1(\Gamma,k^{*})\\
& \rightarrow Aut(k^{\Gamma}\leftidx{^\tau}{\sharp_{\sigma}k[F]})\rightarrow H^2(F\bowtie\Gamma,k^{*})\xrightarrow{res}H^2(F,k^{*})\oplus H^2(\Gamma,k^{*})\\
& \rightarrow Opext(k^\Gamma,k[F])\xrightarrow{\overline{\omega}}H^3(F\bowtie\Gamma,k^{*})\xrightarrow{res}
H^3(F,k^{*})\oplus H^3(\Gamma,k^{*})\rightarrow\cdots.
\end{align*}

And the element $[\tau,\sigma]\in Opext(k^\Gamma,k[F])$ is mapped under $\overline{\omega}$ onto 3-cocycle $\omega(\tau,\sigma)\in H^3(F\bowtie\Gamma,k^{*})$, which is defined by: for $\forall a,b,c \in F\bowtie\Gamma$
$$\omega(\tau,\sigma)(a,b,c):=\tau_{\pi(c)}(p(a)\lhd \pi(b),p(b))\sigma_{p(a)}(\pi(b),p(b)\rhd\pi(c)),$$
where $\pi: F\bowtie\Gamma\rightarrow F, (g,h)\mapsto g$, and $p: F\bowtie\Gamma\rightarrow\Gamma, (g,h)\mapsto h$ are projections.
\begin{thm}[\cite{Na1}]
Let $(F,\Gamma,\lhd,\rhd)$ be a matched pair of finite groups. Suppose that $H$ is a Hopf algebra fitting into an abelian extension
 $$k\rightarrow k^{\Gamma}\rightarrow H\rightarrow k[F]\rightarrow k$$
 associated to  $(F,\Gamma,\lhd,\rhd)$. Then  $H$ is group-theoretical, and  $\text{Rep}\,(H)\cong \mathcal{C}(F\bowtie\Gamma,\omega(\tau,\sigma),F,1)$ as fusion categories.
\end{thm}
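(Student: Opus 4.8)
The plan is to realize $Rep(H)$ directly as the dual category $\mathcal{C}(G,\omega,F,1)$, where I set $G:=F\bowtie\Gamma$ and $\omega:=\omega(\tau,\sigma)$, so that group-theoreticity is built into the identification rather than being an extra claim. First I would check that the pair $(F,1)$ is admissible, i.e.\ that $\omega|_{F\times F\times F}$ is the trivial cocycle, so $\mathcal{M}(F,1)$ is a genuine $Vec^\omega_G$-module category. This is a direct evaluation of the Kac formula $\omega(\tau,\sigma)(a,b,c)=\tau_{\pi(c)}(p(a)\lhd\pi(b),p(b))\,\sigma_{p(a)}(\pi(b),p(b)\rhd\pi(c))$ on elements $a,b,c$ lying in the copy of $F$ inside $G$: there $p(a)=p(b)=1$ and $1\lhd\pi(b)=1$, so the two factors collapse to $\tau_{\pi(c)}(1,1)$ and $\sigma_1(\pi(b),\pi(c))$, both equal to $1$ by the normalizations $\tau_x(1,1)=1$ and $\sigma_1\equiv 1$. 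Hence $\omega|_{F\times F\times F}=1$ on the nose, $d(1)=\omega|_F$, and $\mathcal{C}(G,\omega,F,1)=(Vec^\omega_G)^*_{\mathcal{M}(F,1)}$ is well defined and group-theoretical by definition. By Ostrik's description it is the category ${}_{\mathcal{A}}(Vec^\omega_G)_{\mathcal{A}}$ of $\mathcal{A}$-bimodules in $Vec^\omega_G$, with $\mathcal{A}=k_1[F]=k[F]$.

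The heart of the argument is to produce a tensor equivalence $\Phi: Rep(H)\xrightarrow{\ \sim\ }{}_{\mathcal{A}}(Vec^\omega_G)_{\mathcal{A}}$. I would use that, since $k^\Gamma$ is a subalgebra of $H$, an $H$-module is a $\Gamma$-graded space $V=\bigoplus_{s\in\Gamma}V_s$ equipped with an $F$-action whose interaction with the grading is governed by $\lhd$ and twisted by $\sigma$, exactly as dictated by the product $(e_g\#x)(e_h\#y)=\delta_{g\lhd x,h}\sigma_g(x,y)e_g\#xy$. Using the bijection $G\cong F\times\Gamma$ afforded by the exact factorization $G=F\Gamma$, $F\cap\Gamma=\{e\}$, I would send such a $V$ to the $G$-graded space $k[F]\otimes V$ whose $(x,s)$-homogeneous component is $kx\otimes V_s$, with left $\mathcal{A}$-action by left multiplication on $k[F]$, and right $\mathcal{A}$-action transported from the $F$-action on $V$, twisted by $\sigma$ and governed by the matched-pair actions $\lhd,\rhd$. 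The monoidal structure of $Rep(H)$ comes from $\Delta(e_g\#x)=\sum_{st=g}\tau_x(s,t)\,e_s\#(t\rhd x)\otimes e_t\#x$; I would match the tensor product of $H$-modules with the relative tensor product $-\otimes_{\mathcal{A}}-$ of bimodules, and check that the associativity constraint produced by $\tau$ on the left is precisely the $\omega(\tau,\sigma)$-associator on the right. Finally, comparing Frobenius--Perron dimensions ($\dim H=|F||\Gamma|=|G|=\mathrm{FPdim}\,\mathcal{C}(G,\omega,F,1)$) and counting simple objects, I would verify that $\Phi$ is fully faithful and essentially surjective, hence an equivalence of fusion categories.

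I expect the main obstacle to be the coherence bookkeeping of the previous step: showing that the single $3$-cocycle $\omega(\tau,\sigma)$ simultaneously encodes, through its two factors, both the multiplicative datum $\sigma$ (controlling the right $\mathcal{A}$-action) and the comultiplicative datum $\tau$ (controlling the bimodule associativity). This is exactly why the Kac formula for $\overline{\omega}([\tau,\sigma])$ is an asymmetric product of a $\tau$-term and a $\sigma$-term, and verifying that the two sides coincide is the technical core.

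As an independent cross-check that sidesteps these computations, I would invoke the Drinfeld-double route: one shows $D(H)\cong D^{\omega}(G)$, the twisted quantum double of $G$, so that $Z(Rep(H))\cong Rep(D^\omega(G))\cong Z(Vec^\omega_G)$ as braided categories. Theorem \ref{Drinfeldcenter} (equivalently Corollary \ref{Drinfelddouble}) then forces $Rep(H)$ and $Vec^\omega_G$ to be categorically Morita equivalent, reconfirming that $Rep(H)$ is group-theoretical; the explicit functor $\Phi$ is then what upgrades this soft conclusion to the precise identification $Rep(H)\cong\mathcal{C}(G,\omega,F,1)$ claimed in the statement.
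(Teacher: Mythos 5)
The paper does not prove this statement: it is quoted verbatim from Natale's paper \cite{Na1}, so there is no in-paper proof to compare against. Your proposal correctly reconstructs the standard argument from that reference --- verify via the Kac formula that $\omega(\tau,\sigma)|_{F\times F\times F}=1$ (your computation using $p(a)=p(b)=1$, $1\lhd x=1$, and the normalizations $\tau_{\pi(c)}(\cdot,1)=1$, $\sigma_1\equiv 1$ is right), so that $k[F]$ is an algebra in $Vec^\omega_G$, and then exhibit $Rep(H)$ as the category of $k[F]$-bimodules in $Vec^\omega_G$ via $V\mapsto k[F]\otimes V$. This is essentially the same route as \cite{Na1}; the only caveat is that the monoidal-coherence verification (that $\tau$ and $\sigma$ together reproduce the $\omega(\tau,\sigma)$-associator under $\otimes_{k[F]}$) is exactly the content of the theorem and remains deferred in your sketch, and your Drinfeld-double cross-check only recovers group-theoreticity, not the precise pair $(F,1)$, as you yourself note.
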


\subsection{Schur multiplier of metacyclic groups}

\begin{defi}A finite group G is a metacyclic group if it contains a cyclic normal subgroup $H$ s.t the quotient group $G/H$ is cyclic.
\end{defi}
\begin{Example}Cyclic groups are metacyclic groups; direct products or semi-products cyclic groups are metacyclic groups, such as  the dihedral groups $D_{2n}$, $n\in \mathbb{Z},~n>2$.
\end{Example}
\begin{thm}[\cite{Ro}]For a finite group G, it is a metacyclic group iff all its Sylow subgroups are cyclic.
\end{thm}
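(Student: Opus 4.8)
The plan is to concentrate on the substantive implication, that a finite group $G$ all of whose Sylow subgroups are cyclic (a \emph{$Z$-group}) is metacyclic; this is the classical theorem of H\"older, Burnside and Zassenhaus. For the reverse implication I would record that it holds in the sharp form: $G$ is a $Z$-group precisely when $G\cong C_m\rtimes C_n$ with $\gcd(m,n)=1$. The bare cyclic-by-cyclic condition of the definition is not quite enough on its own---for instance $\mathbb{Z}_2\times\mathbb{Z}_2$ is cyclic-by-cyclic yet has a non-cyclic Sylow $2$-subgroup---but this causes no trouble in our setting, since every group of order $pqr$ has Sylow subgroups of prime order and is automatically a $Z$-group.

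First I would show a $Z$-group is solvable, by induction on $|G|$ via Burnside's normal $p$-complement theorem. Let $p$ be the smallest prime dividing $|G|$ and let $P$ be a Sylow $p$-subgroup; since $P$ is cyclic it is abelian, so $P\le C_G(P)$, and because $P$ is a Sylow $p$-subgroup of $N_G(P)$ lying inside $C_G(P)$, the quotient $N_G(P)/C_G(P)$ is a $p'$-group. As it embeds in $\mathrm{Aut}(P)$, whose $p'$-part is cyclic of order $p-1$, its order divides $p-1$; but every prime divisor of $|G|$ is at least $p$, so none can divide $p-1$, forcing $N_G(P)=C_G(P)$. Burnside's theorem then produces a normal $p$-complement $N\trianglelefteq G$ with $G=N\rtimes P$ (the splitting by Schur--Zassenhaus). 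Being a subgroup of a $Z$-group, $N$ is again a $Z$-group of smaller order, hence metacyclic and solvable by induction; as $G/N\cong P$ is cyclic, $G$ is solvable.

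Next I would upgrade solvability to the metacyclic structure. In a solvable $Z$-group every chief factor is at once elementary abelian and a section of a cyclic group, hence of prime order, so $G$ is supersolvable. I would then argue that the derived subgroup $G'$ is cyclic and is a Hall subgroup with cyclic quotient $G/G'$, that $\gcd(|G'|,|G/G'|)=1$, and that the extension $1\to G'\to G\to G/G'\to 1$ splits by Schur--Zassenhaus. This exhibits $G$ as cyclic-by-cyclic, and in fact yields the explicit presentation $G=\langle a,b\mid a^m=b^n=1,\ bab^{-1}=a^r\rangle$ with $\gcd(m,n)=1$ and $r^n\equiv 1\pmod m$.

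The main obstacle is this last step: passing from ``solvable with cyclic Sylow subgroups'' to the assertion that $G'$ is itself cyclic and splits off as a Hall subgroup. The natural route threads the normal $p$-complement of the Burnside step through the derived series and reconciles the inductive metacyclic form of $N$ with the cyclic generator of $P$; the delicate bookkeeping is to ensure the generator of $G'$ remains genuinely cyclic across the extension rather than merely a product of its cyclic $p$-parts, and to track the coprimality $\gcd(|G'|,|G/G'|)=1$ that powers the splitting throughout the induction.
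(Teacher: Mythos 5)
The paper does not actually prove this statement---it is quoted from Robinson's book---so there is no internal proof to compare against; I can only assess your sketch on its own terms. Your preliminary caveat is well taken: with the definition of metacyclic used in this paper (a cyclic normal subgroup with cyclic quotient), the implication ``metacyclic $\Rightarrow$ all Sylow subgroups cyclic'' is false as literally stated ($\mathbb{Z}_2\times\mathbb{Z}_2$ is a counterexample), and the theorem is only invoked in the paper through the remark immediately following it, namely that a group of square-free order has all Sylow subgroups of prime order, hence cyclic, hence is metacyclic. So the substantive content is indeed the direction you concentrate on.

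For that direction your argument has a genuine gap exactly where you flag one. The solvability step is complete and correct: $N_G(P)=C_G(P)$ for a Sylow subgroup $P$ of the smallest prime, Burnside's normal $p$-complement theorem, and induction on $|G|$. But the passage from ``solvable with all Sylow subgroups cyclic'' to ``$G'$ and $G/G'$ are both cyclic'' is only announced (``I would then argue that\dots''), and that is precisely the nontrivial content of the theorem; ``threading the normal $p$-complement through the derived series'' does not by itself control $G'$. The standard way to close the gap avoids this bookkeeping entirely: since $G$ is solvable, $C_G(F(G))\le F(G)$ where $F(G)$ is the Fitting subgroup; $F(G)$ is nilpotent with cyclic Sylow subgroups, hence a direct product of cyclic groups of pairwise coprime orders, hence cyclic; therefore $G/C_G(F(G))$ embeds in the abelian group $\mathrm{Aut}(F(G))$, which forces $G'\le C_G(F(G))\le F(G)$, so $G'$ is cyclic; and $G/G'$ is abelian with cyclic Sylow subgroups, hence cyclic. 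This already yields metacyclicity. The refinements you also assert---$\gcd(|G'|,|G/G'|)=1$ and the splitting $G\cong C_m\rtimes C_n$---require a further short argument (Zassenhaus), but they are not needed for the statement as it is used in the paper.
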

Therefore, by Lagrangian theorem of finite groups, we see any finite group of order square-free is a metacyclic group.

The following proposition gives the explicit group structure of metacyclic groups.
\begin{pro}[\cite{Ro}]Every finite metacyclic group is isomorphic to groups like:
$\langle a,b\mid a^m=1, bab^{-1}=a^r, b^n=a^{{\frac{m\lambda}{(m,r-1)}}}\rangle$, where $m,~n$ are natural numbers, and $r,~\lambda$ are integers, and $r^n\equiv1 (mod ~m)$. We denote it  by $G(m,n,r,\lambda)$. And two metacyclic groups $G(m,n,r,\lambda)\cong G(m,n,r,\lambda{'})$, where $\lambda{'}=(\lambda,h(m,n,r))$ is the maximal common divisor of $\lambda$ and $h(m,n,r)$, and $h(m,n,r):=\frac{(m,1+r+\cdots+r^{n-1})(m,r-1)}{m}$.
\end{pro}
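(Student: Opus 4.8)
The plan is to establish the two assertions of the proposition separately: the existence of the presentation, and the reduction of the parameter $\lambda$ up to $\gcd(\lambda,h)$.

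\textbf{Existence and consistency.} Let $G$ be metacyclic, with cyclic normal subgroup $A=\langle a\rangle$ of order $m$ such that $G/A$ is cyclic of order $n$, and fix a preimage $b\in G$ of a generator of $G/A$. Since $A$ is normal and cyclic, conjugation by $b$ is an automorphism of $A$, so there is an integer $r$ coprime to $m$ with $bab^{-1}=a^{r}$; iterating gives $b^{n}ab^{-n}=a^{r^{n}}$, and because $b^{n}\in A$ centralizes $a$ we obtain $r^{n}\equiv 1\pmod m$. As $b^{n}\in A$ we may write $b^{n}=a^{s}$, and conjugating this by $b$ yields $a^{s}=a^{sr}$, so $m\mid s(r-1)$, i.e. $\tfrac{m}{(m,r-1)}\mid s$; writing $s=\tfrac{m\lambda}{(m,r-1)}$ exhibits $G$ as a quotient of the group presented by the stated relators. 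A counting argument closes the existence part: the presented group $P$ surjects onto $G$ by von Dyck's theorem, and in $P$ the element $a$ has order dividing $m$ while $b^{n}\in\langle a\rangle$, so $|P|\le mn=|G|$ and the surjection is an isomorphism. The same count, run in reverse, shows conversely that for any data $(m,n,r,\lambda)$ with $r^{n}\equiv 1\pmod m$ the abstract group $G(m,n,r,\lambda)$ is a genuine cyclic extension $1\to\langle a\rangle\to G(m,n,r,\lambda)\to\langle\bar b\rangle\to 1$ of order $mn$, the hypothesis $r^{n}\equiv 1\pmod m$ guaranteeing that the extension is well defined.

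\textbf{The transformation law of $\lambda$.} The heart of the isomorphism statement is how $\lambda$ changes under a change of generators. The key computation, proved by induction on $n$ starting from $ba=a^{r}b$, is
\begin{equation*}
(a^{t}b)^{n}=a^{\,t(1+r+\cdots+r^{n-1})}\,b^{n}=a^{\,tS_{n}+m\lambda/(m,r-1)},\qquad S_{n}:=1+r+\cdots+r^{n-1}.
\end{equation*}
Replacing the generators by $a'=a^{u}$ with $(u,m)=1$ and $b'=a^{t}b$ leaves the conjugation exponent fixed, since $b'a'b'^{-1}=(a')^{r}$, so both $r$ and $(m,r-1)$ are unchanged; meanwhile the exponent of $(b')^{n}$, read as a power of $a'$, becomes $u^{-1}\bigl(tS_{n}+\tfrac{m\lambda}{(m,r-1)}\bigr)\bmod m$. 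Thus the admissible new values of $\tfrac{m\lambda'}{(m,r-1)}$ are obtained from the old one by adding an arbitrary multiple of $(m,S_{n})$ and then multiplying by a unit of $\mathbb{Z}_{m}$.

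\textbf{Orbit matching and the main obstacle.} It remains to translate this into a statement about $\lambda$ modulo $(m,r-1)$ and to identify the orbit of $\lambda$ with the set of $\lambda'$ satisfying $(\lambda',h)=(\lambda,h)$, where $h=\tfrac{(m,S_{n})(m,r-1)}{m}$; in particular $\lambda$ is then always reducible to $(\lambda,h)$, which is the claimed isomorphism. A preliminary lemma is that $h$ is an integer: this follows from $S_{n}(r-1)=r^{n}-1\equiv 0\pmod m$ together with the elementary fact that $m\mid xy$ forces $m\mid (m,x)(m,y)$, checked prime by prime on $p$-adic valuations. The delicate point, and where essentially all the arithmetic content of the proposition is concentrated, is exactly this orbit computation: one must match the combined effect of the additive shift by multiples of $(m,S_{n})$ and the multiplication by units of $\mathbb{Z}_{m}$, subject to the integrality of $\tfrac{m\lambda'}{(m,r-1)}$, against the single invariant $(\lambda,h)$. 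I expect this prime-by-prime valuation analysis of the three interacting constraints to be the main obstacle, the earlier steps being essentially formal.
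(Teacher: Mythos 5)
The paper offers no proof of this proposition at all: it is quoted from Robinson's book \cite{Ro} (the isomorphism criterion goes back to Beyl \cite{B}), so there is no internal argument to compare against, and your attempt has to be judged on its own. The parts you do carry out are correct: extracting the presentation from a cyclic normal subgroup $\langle a\rangle$ with cyclic quotient, the von Dyck plus counting argument identifying $G$ with the presented group of order $mn$, the identity $(a^{t}b)^{n}=a^{tS_{n}}b^{n}$, the resulting action $s\mapsto u^{-1}(s+tS_{n})\bmod m$ on $s=m\lambda/(m,r-1)$ under the change of generators $(a,b)\mapsto(a^{u},a^{t}b)$, and the valuation argument for the integrality of $h$. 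This is the standard framework and presumably the same skeleton as in \cite{Ro,B}.

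However, the actual content of the proposition --- that $G(m,n,r,\lambda)\cong G(m,n,r,(\lambda,h))$ --- is precisely the orbit computation you label as ``the main obstacle'' and do not perform, so the proof is genuinely incomplete rather than merely terse. Concretely, after passing to $\lambda$ modulo $d:=(m,r-1)$ (using that $m/d$ divides $S_{n}$, so the additive shifts by multiples of $(m,S_{n})$ become shifts of $\lambda$ by multiples of $h$, and that every unit of $\mathbb{Z}_{d}$ lifts to a unit of $\mathbb{Z}_{m}$), what remains is to produce a unit $u$ of $\mathbb{Z}_{d}$ and an integer $t$ with $u\bigl((\lambda,h)+th\bigr)\equiv\lambda\pmod d$; equivalently, writing $g=(\lambda,h)$, to show $g+th$ can be arranged to equal $\lambda$ times a unit of $\mathbb{Z}_{d}$. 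This is a Dirichlet-type argument over the prime divisors of $d$, using $(\lambda/g,h/g)=1$, and without it the isomorphism claim is unproved. A secondary, smaller point: you only consider isomorphisms that preserve the chosen cyclic normal subgroup $\langle a\rangle$, which suffices for the one-directional reduction $\lambda\rightsquigarrow(\lambda,h)$ asserted in the statement, but you should say explicitly that this is all you are proving, since $\langle a\rangle$ need not be characteristic and a full classification up to isomorphism would require more.
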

The Schur multiplier $H^2(G,k^*)$ of metacyclic group $G$ had been computed already, here we only give the conclusion and omit its proof.
\begin{pro}[\cite{B}]
For metacyclic group $G(m,n,r,\lambda)$, the Schur multiplier  $H^2(G(m,n,r,\lambda), k^{*})$ is a cyclic group of order $(\lambda,h(m,n,r))$.
\end{pro}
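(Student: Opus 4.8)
The plan is to compute the Schur multiplier $H^2(G,k^*)$ of $G=G(m,n,r,\lambda)$ by running the Lyndon--Hochschild--Serre spectral sequence of the defining cyclic extension. Since $\langle a\rangle\cong\mathbb{Z}_m$ is normal with cyclic quotient $G/\langle a\rangle\cong\mathbb{Z}_n$, there is a short exact sequence $1\to\mathbb{Z}_m\to G\to\mathbb{Z}_n\to 1$ and a spectral sequence $E_2^{p,q}=H^p(\mathbb{Z}_n,H^q(\mathbb{Z}_m,k^*))\Rightarrow H^{p+q}(G,k^*)$, with $\mathbb{Z}_n$ acting on the coefficients by conjugation by $b$. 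First I would compute the coefficient groups: because $k^*=\mathbb{C}^*$ is divisible, the cohomology of the cyclic group $\mathbb{Z}_m$ reduces to $H^0(\mathbb{Z}_m,k^*)=k^*$ (trivial action), $H^1(\mathbb{Z}_m,k^*)=\widehat{\mathbb{Z}_m}\cong\mathbb{Z}_m$ on which $b$ acts through the integer $r$ (dual to $a\mapsto a^r$), and $H^2(\mathbb{Z}_m,k^*)=0$.

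Next I would read off the total-degree-two antidiagonal. Using again that $\mathbb{Z}_n$ is cyclic and $k^*$ is divisible, $E_2^{2,0}=H^2(\mathbb{Z}_n,k^*)=0$ and $E_2^{0,2}=H^0(\mathbb{Z}_n,0)=0$, so the only possibly nonzero term is $E_2^{1,1}=H^1(\mathbb{Z}_n,\widehat{\mathbb{Z}_m})$, and hence $H^2(G,k^*)\cong E_\infty^{1,1}$. Writing $N:=1+r+\cdots+r^{n-1}$ and using $N(r-1)=r^n-1\equiv 0\pmod m$, the cyclic-group formula gives $E_2^{1,1}=\ker(N)/\operatorname{im}(r-1)$ as a subquotient of $\mathbb{Z}_m$; in particular it is cyclic, and counting $|\ker(N)|=(m,N)$ and $|\operatorname{im}(r-1)|=m/(m,r-1)$ yields $|E_2^{1,1}|=\frac{(m,N)(m,r-1)}{m}=h(m,n,r)$.

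It then remains to analyse the single relevant differential $d_2\colon E_2^{1,1}\to E_2^{3,0}=H^3(\mathbb{Z}_n,k^*)\cong\mathbb{Z}_n$; the incoming differential vanishes since $E_2^{-1,2}=0$, and there are no higher differentials out of the $(1,1)$ spot, so $E_\infty^{1,1}=\ker d_2$. I would identify $d_2$ on $E_2^{p,1}$ with cup product against the class of the extension $1\to\mathbb{Z}_m\to G\to\mathbb{Z}_n\to 1$ in $H^2(\mathbb{Z}_n,\mathbb{Z}_m)$, which is itself cyclic of order $h(m,n,r)$. This extension class is recovered from the last relation $b^n=a^{m\lambda/(m,r-1)}$: since $a^{m/(m,r-1)}$ generates $\ker(r-1)\subseteq\mathbb{Z}_m$, the class equals $\lambda$ times a generator, so $d_2$ becomes ``multiplication by $\lambda$'' on the cyclic group $E_2^{1,1}\cong\mathbb{Z}_h$. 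Its kernel is therefore cyclic of order $(\lambda,h(m,n,r))$, and since $H^2(G,k^*)\cong\ker d_2$ this is exactly the asserted answer.

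Cyclicity is essentially free, as $\ker d_2$ is a subgroup of the cyclic group $E_2^{1,1}$; the real content, and the main obstacle, is the last step---making the identification of $d_2$ with cup product against the extension class rigorous, fixing the signs and the compatible generators of $H^2(\mathbb{Z}_n,\mathbb{Z}_m)$, $E_2^{1,1}$ and $E_2^{3,0}$, and checking that the kernel has order exactly $(\lambda,h)$. A useful sanity check is to test the boundary cases: when $(\lambda,h)=1$ the map $d_2$ should be injective (so $H^2=0$), while when $h\mid\lambda$ it should vanish (so $|H^2|=h$). An alternative route that avoids some of the spectral-sequence bookkeeping is to start from a two-generator, three-relator presentation of $G$ and apply Hopf's formula $M(G)=(R\cap[F,F])/[F,R]$, where the dependence on $\lambda$ enters directly through the relator $b^n a^{-m\lambda/(m,r-1)}$; the commutator calculus there is heavier, so I would keep the spectral sequence as the main argument and use the presentation only to corroborate the order.
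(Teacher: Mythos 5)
The paper does not prove this statement at all: it explicitly says the Schur multiplier of a metacyclic group ``had been computed already'' and defers entirely to Beyl's paper \cite{B}, so there is no in-paper argument to compare against. Your Lyndon--Hochschild--Serre route is a legitimate and essentially standard way to prove it, and is genuinely different from Beyl's published argument, which works with Schur's theory of representation groups and a presentation (close to your ``alternative route'' via Hopf's formula). Your $E_2$-page bookkeeping is correct: the coefficient computations, the vanishing of $E_2^{2,0}$ and $E_2^{0,2}$, the order count $|E_2^{1,1}|=h(m,n,r)$, and the identification of the extension class as $\lambda$ times a generator of $H^2(\mathbb{Z}_n,\mathbb{Z}_m)\cong\mathbb{Z}_{h}$ all check out (and the ambiguity between the actions by $r$ and $r^{-1}$ on $\widehat{\mathbb{Z}_m}$ is harmless for the orders involved).

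Two points carry the real weight and should be made explicit. First, the description of $d_2$ on the $q=1$ row as cup product with the extension class followed by the evaluation pairing is a theorem (Charlap--Vasquez, or Huebschmann), not a formality; you correctly flag this. Second, ``$d_2$ becomes multiplication by $\lambda$ on $\mathbb{Z}_h$'' does not literally parse, since $d_2$ lands in $H^3(\mathbb{Z}_n,k^*)\cong\mathbb{Z}_n$; what you need is that cupping with a generator of $H^2(\mathbb{Z}_n,\mathbb{Z}_m)$ embeds $E_2^{1,1}$ into $\mathbb{Z}_n$. Without that you only get $|\ker d_2|\ge(\lambda,h)$. This injectivity is exactly the non-degeneracy of the cup-product duality between $H^1(\mathbb{Z}_n,\widehat{\mathbb{Z}_m})$ and $H^2(\mathbb{Z}_n,\mathbb{Z}_m)$ with values in $H^3(\mathbb{Z}_n,k^*)$, i.e.\ Tate duality for cyclic groups; once you invoke it, the kernel has order exactly $(\lambda,h(m,n,r))$ and cyclicity is, as you say, automatic.
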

By taking $\lambda=1$, we get a special case, which we mainly use later:
\begin{cor}\label{trivialSchur}For any finite metacyclic group $G:=\langle a,b\mid a^m=1,b^n=a^{\frac{m}{(m,r-1)}}, bab^{-1}=a^r,\rangle$ with $r^n\equiv 1 (mod~m)$,~$(m,n)=(n,r-1)=1$, we have $H^2(G,k^{*})=0$ .
\end{cor}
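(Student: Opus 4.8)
The plan is to obtain the statement as a direct specialization of the preceding proposition on Schur multipliers of metacyclic groups. First I would observe that the group $G$ in the statement is exactly the member $G(m,n,r,1)$ of the family $G(m,n,r,\lambda)$: its defining relations $a^m=1$, $bab^{-1}=a^r$ coincide with the standard ones, the relation $b^n=a^{m/(m,r-1)}$ is the $\lambda=1$ instance of $b^n=a^{m\lambda/(m,r-1)}$, and the standing hypothesis $r^n\equiv 1\pmod m$ is present. The proposition then says that $H^2\bigl(G(m,n,r,\lambda),k^*\bigr)$ is cyclic of order $(\lambda,h(m,n,r))$, so for $\lambda=1$ its order is $(1,h(m,n,r))=1$ and $H^2(G,k^*)=0$. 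At this level the corollary is immediate and does not even require computing $h(m,n,r)$.

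I would then include a short verification showing that under the stated hypotheses one in fact has $h(m,n,r)=1$; this clarifies the role of the extra assumptions and shows that the conclusion is robust (it holds for every $\lambda$, not only $\lambda=1$). Write $S:=1+r+\cdots+r^{n-1}$ and $d:=(m,r-1)$, so $h=\frac{(m,S)\,d}{m}$ and $m\mid r^n-1=(r-1)S$. The check is carried out one prime at a time: fix a prime $p$ with $p^e\,\|\,m$. If $p\nmid(r-1)$, then $p\nmid d$, and $p^e\mid(r-1)S$ together with $p\nmid(r-1)$ force $p^e\mid S$, so the $p$-contributions of $(m,S)$ and $d$ multiply to $p^e$. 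If $p\mid(r-1)$, then $r\equiv1\pmod p$ gives $S\equiv n\pmod p$, and the hypothesis $(n,r-1)=1$ forces $p\nmid n$, hence $p\nmid S$; now $p^e\mid(r-1)S$ with $p\nmid S$ yields $p^e\mid(r-1)$, so the $p$-part of $d$ is $p^e$ and again the contributions multiply to $p^e$. In every case the $p$-part of $(m,S)\,d$ equals that of $m$, whence $(m,S)\,d=m$ and $h(m,n,r)=1$.

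I do not anticipate a genuine obstacle: the corollary is essentially the substitution $\lambda=1$ into the cited formula. The only points demanding a little care are the bookkeeping that matches the given presentation with $G(m,n,r,1)$, and, in the optional refinement, the elementary prime-by-prime argument, whose one essential input is the coprimality $(n,r-1)=1$ (used to guarantee $S\equiv n\not\equiv0\pmod p$ whenever $p\mid r-1$); the assumption $(m,n)=1$ is not needed for the vanishing and should be regarded as a contextual hypothesis.
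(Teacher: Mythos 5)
Your proposal is correct and follows the paper's own route: the corollary is obtained simply by setting $\lambda=1$ in the preceding proposition, so that the Schur multiplier is cyclic of order $(1,h(m,n,r))=1$. Your additional prime-by-prime verification that $h(m,n,r)=1$ under the stated hypotheses is accurate (and your observation that either $(n,r-1)=1$ or $(m,n)=1$ alone suffices for that refinement is a fair remark), but it is not needed for the conclusion and does not appear in the paper.
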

Note that a finite group of $pq$ has trivial Schur multiplier group, where $p,q$ are distinct primes.
\section{Semisimple Hopf algebras of dimension $pqr$}\label{Semsimplepqr}
From now on, we always assume $r<q<p$ be three distinct primes.

In this section, we make a summary of the classification of semisimple Hopf algebras of dimension $pqr$, see references \cite{ENO2,Na3}.

Recall that a fusion category $\mathcal{C}$ is integral, if for any simple object $X\in\mathcal{C}$, the Frobenius-Perron dimension $FPdim(X)$ is a positive integer, see \cite{ENO1,EGNO} for a systematic discussions of Frobenius-Perron dimension of fusion categories and module categories.
\begin{thm}[\cite{ENO2}]Integral fusion categories of FP-dimension $pqr$ are group-theoretical, and semisimple Hopf algebras of dimension $pqr$ are obtained by an splitting abelian extension, that is, it is isomorphic to a smash product $k^\Gamma\# k[F]$ for exact factorization $G=F\Gamma$, where $G$ is a finite group of order $pqr$.
\end{thm}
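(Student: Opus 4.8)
The statement bundles two claims: a fusion-categorical one (integrality of FP-dimension $pqr$ forces group-theoreticity) and a Hopf-algebraic refinement (the defining abelian extension splits). My plan is to treat the group-theoretical conclusion as a structural input, obtained from the solvability theory of \cite{ENO2} together with the abelian-extension machinery recalled above, and then to spend the real effort proving the splitting by a cohomology-vanishing computation driven by Corollary \ref{trivialSchur}.

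For the first clause I would proceed as follows. An integral fusion category $\mathcal{C}$ of FP-dimension $pqr$ is solvable by the results of \cite{ENO2}; equivalently, its Drinfeld center $Z(\mathcal{C})$ is an integral modular category of FP-dimension $(pqr)^2$. To upgrade solvability to group-theoreticity I would invoke the Lagrangian-subcategory criterion: $\mathcal{C}$ is group-theoretical precisely when $Z(\mathcal{C})$ contains a Lagrangian (Tannakian, of FP-dimension $pqr$) subcategory $\cong Rep(G)$, in which case $Z(\mathcal{C})\cong Z(Vec^\omega_G)$ and $\mathcal{C}$ is categorically Morita equivalent to $Vec^\omega_G$ in the sense of Theorem \ref{Drinfeldcenter}. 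Solvability supplies a maximal isotropic subcategory, and the square-freeness of $pqr$ guarantees it is Tannakian rather than a twisted form, since the relevant obstruction lives in a Schur multiplier that vanishes for groups of square-free order (Corollary \ref{trivialSchur}). On the Hopf-algebra side this is cleaner: once one knows $H$ sits in an abelian extension $k\to k^\Gamma\to H\to k[F]\to k$ attached to a matched pair $(F,\Gamma,\lhd,\rhd)$ with bicrossed product $G=F\bowtie\Gamma$ of order $pqr$ — the content of Natale's classification \cite{Na3} via semisolvability — group-theoreticity is immediate from the matched-pair theorem recalled above, giving $Rep(H)\cong\mathcal{C}(G,\omega(\tau,\sigma),F,1)$.

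The heart of the argument is then to show that the extension class $[\tau,\sigma]$ is trivial in $Opext(k^\Gamma,k[F])$, i.e. that one may take $\tau=1$ and $\sigma=1$. First I would record that $|F|\,|\Gamma|=pqr$ is square-free, so $|F|$ and $|\Gamma|$ are coprime and each of $p,q,r$ divides exactly one of them; in particular $F,\Gamma$ and $G$ are metacyclic (all Sylow subgroups cyclic), and by Corollary \ref{trivialSchur} their Schur multipliers vanish, $H^2(F,k^*)=H^2(\Gamma,k^*)=0$. Feeding this into the Kac exact sequence recalled above, the term $H^2(F,k^*)\oplus H^2(\Gamma,k^*)$ is zero, so $\overline{\omega}$ is injective and identifies $Opext(k^\Gamma,k[F])$ with $\ker\big(res\colon H^3(G,k^*)\to H^3(F,k^*)\oplus H^3(\Gamma,k^*)\big)$. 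To see this kernel is trivial I would argue prime by prime: decomposing $H^3(G,k^*)=\bigoplus_{\ell\in\{p,q,r\}}H^3(G,k^*)_{(\ell)}$ into primary components, the Sylow $\ell$-subgroup of $G$ has order $\ell$ and, up to conjugacy, lies in whichever of $F$ or $\Gamma$ has order divisible by $\ell$; since restriction to a Sylow subgroup is injective on the $\ell$-primary part of cohomology, restriction to that factor is already injective on $H^3(G,k^*)_{(\ell)}$. As this holds for every prime, $res$ is injective, the kernel vanishes, and hence $Opext(k^\Gamma,k[F])=0$. Therefore every abelian extension with these data splits and $H\cong k^\Gamma\#k[F]$, as claimed.

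I expect the genuine obstacle to be the structural input rather than the splitting: proving that an arbitrary semisimple Hopf algebra of dimension $pqr$ (respectively an arbitrary integral fusion category of that FP-dimension) actually arises from a matched pair — equivalently, establishing solvability and the existence of a Tannakian Lagrangian subcategory in the center — is the deep step, resting on \cite{ENO2,Na3}. Once that is in hand, the vanishing of $Opext(k^\Gamma,k[F])$ is a clean consequence of the triviality of the Schur multipliers of square-free groups, and the whole difficulty collapses to the primary-component Sylow-restriction argument above.
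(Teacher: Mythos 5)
First, note that the paper does not actually prove this statement: it is imported verbatim from \cite{ENO2} (with input from \cite{Na3}), so there is no internal proof to compare against. Judged on its own terms, the genuinely valuable and correct part of your proposal is the splitting argument. Once one accepts that $H$ sits in an abelian extension attached to a matched pair $(F,\Gamma)$ with $|F|\,|\Gamma|=pqr$, the two factors have coprime square-free orders, hence trivial Schur multipliers (Corollary \ref{trivialSchur}), so the Kac exact sequence of \cite{K} embeds $Opext(k^\Gamma,k[F])$ into $\ker\bigl(res\colon H^3(F\bowtie\Gamma,k^*)\to H^3(F,k^*)\oplus H^3(\Gamma,k^*)\bigr)$; your Sylow argument --- each Sylow subgroup of $F\bowtie\Gamma$ has prime order and is conjugate into $F$ or $\Gamma$, and restriction to a Sylow subgroup is injective on the corresponding primary component of cohomology --- correctly kills this kernel. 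This is a clean and complete derivation of $\tau=\sigma=1$, i.e.\ of $H\cong k^\Gamma\#k[F]$, and it is essentially the computation underlying the assertion cited from \cite{ENO2,Na3}.

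The genuine gap is in your sketch of the first clause. ``Solvability supplies a maximal isotropic subcategory of full dimension'' is not a theorem: integral solvable fusion categories need not be group-theoretical --- \cite{ENO2} itself exhibits semisimple (hence integral and solvable) Hopf algebras of dimension $4p^2$ whose representation categories are not group-theoretical. Consequently the obstruction is not, as you suggest, whether a maximal isotropic subcategory of $Z(\mathcal{C})$ is Tannakian rather than a twisted form (something a Schur-multiplier vanishing could address); it is whether $Z(\mathcal{C})$ contains an isotropic subcategory of FP-dimension $pqr$ at all, and that requires the dimension-specific analysis of \cite{ENO2} (universal gradings, dimensions of simple objects, equivariantization), not a formal consequence of solvability plus square-freeness. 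Relatedly, passing from ``group-theoretical'' to ``fits into an abelian extension'' needs the extra observation that square-freeness forces the fiber-functor datum $(L,\beta)$ of Theorem \ref{fiberfunctor} to satisfy $L\cap F=1$ (there are no nondegenerate $2$-cocycles on groups of square-free order), which is what produces the exact factorization $G=LF$; you defer this to \cite{Na3}, which is acceptable, but it should be made explicit since it is precisely where the hypothesis on the dimension enters.
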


Therefore, all semisimple Hopf algebras of dimension $pqr$ are isomorphic to one of the following forms $k^G$, $k[G]$, $k^\Gamma\# k[F]$($\Gamma$ and $F$ are subgroups of orders less than $pqr$).

Next proposition gives a characterization on primes $p, q, r$,  when there there will exist a non-trivial Hopf algebra of dimension $pqr$.
\begin{pro}[\cite{Na3}]\label{DualHopfalgebras}Let H be non-trivial semisimple Hopf algebra of dimension pqr, and  is obtained by an abelian extension $k\rightarrow k^\Gamma\rightarrow H\rightarrow k[F]\rightarrow k$, then $rq\mid (p-1)$, and $\Gamma$ is the only non-abelian metacyclic group of order pq $($or pr$)$, $F=\mathbb{Z}_r(or~\mathbb{Z}_q)$, G=$F\Gamma$ is an exact factorization of group of order pqr.
\end{pro}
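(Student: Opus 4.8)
The plan is to reduce the statement to pure group theory using the classification recalled above: a semisimple Hopf algebra of dimension $pqr$ arises from a \emph{splitting} abelian extension, so $H\cong k^{\Gamma}\#k[F]$ is the bismash product attached to an exact factorization $G=F\Gamma$ with $|G|=pqr$ and $F\cap\Gamma=\{1\}$; since $\tau=\sigma=1$, the entire Hopf structure is encoded in the matched pair, i.e.\ in the group $G$ together with its two complementary subgroups. The first step is to translate ``$H$ is non-trivial'' (neither commutative nor cocommutative) into group data. Working from the bismash-product relations one checks that $H$ is commutative iff $F\triangleleft G$ and $F$ is abelian, and dually (passing to $H^{*}\cong k^{F}\#k[\Gamma]$) that $H$ is cocommutative iff $\Gamma\triangleleft G$ and $\Gamma$ is abelian. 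Hence $H$ is non-trivial exactly when ($F\not\triangleleft G$ or $F$ non-abelian) and ($\Gamma\not\triangleleft G$ or $\Gamma$ non-abelian); in particular $G$ is non-abelian.

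Next I would pin down the Sylow structure. A standard counting argument ($n_{p}\equiv 1 \pmod p$ and $n_{p}\mid qr$, combined with the fact that $\mathbb{Z}_{p}$ cannot act non-trivially on a normal subgroup of order $qr$, because every prime divisor of $(q-1)(r-1)$ or of $q(q-1)$ is smaller than $p$) shows the Sylow $p$-subgroup $P\cong\mathbb{Z}_{p}$ is normal. As $p^{2}\nmid pqr$, exactly one of $F,\Gamma$ has order divisible by $p$, and that factor contains the unique $P$; I take $\Gamma$ to be this factor (the opposite labelling produces the dual presentation $H^{*}$, covered by the ``$(\text{or }pr)$, $F=\mathbb{Z}_{q}$'' alternative). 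If $|\Gamma|=p$ then $\Gamma=P$ is normal and abelian, so $H$ is cocommutative and trivial; and $|\Gamma|=pqr$ is impossible. Therefore $|\Gamma|\in\{pq,pr\}$, with complement $F\cong\mathbb{Z}_{r}$ or $\mathbb{Z}_{q}$ of prime order.

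The heart of the argument is to show $\Gamma$ is non-abelian and $rq\mid(p-1)$, using the canonical normal subgroup $M\triangleleft G$ of order $pq$ (the preimage of the normal Sylow $q$-subgroup of $G/P$). If $|\Gamma|=pq$, then $[G:\Gamma]=r$ is the smallest prime dividing $|G|$, so $\Gamma\triangleleft G$ and in fact $\Gamma=M$; non-cocommutativity forces $\Gamma$ non-abelian, whence $q\mid(p-1)$ and $\Gamma\cong\mathbb{Z}_{p}\rtimes\mathbb{Z}_{q}$, the unique non-abelian (hence metacyclic) group of order $pq$. Non-commutativity forces $F=\mathbb{Z}_{r}\not\triangleleft G$, i.e.\ $\mathbb{Z}_{r}$ acts non-trivially on $M$; since the kernel of $\mathrm{Aut}(M)\to\mathrm{Aut}(P)=\mathbb{Z}_{p-1}$ has order a power of $p$, an order-$r$ automorphism descends to a non-trivial element of $\mathbb{Z}_{p-1}$, giving $r\mid(p-1)$. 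If instead $|\Gamma|=pr$ and $F=\mathbb{Z}_{q}$, then $F\not\triangleleft G$ forces $M=PF$ non-abelian (else $F$ would be characteristic in the normal subgroup $M$), so $q\mid(p-1)$; and the conjugation map $G\to\mathrm{Aut}(P)=\mathbb{Z}_{p-1}$, having abelian image, shows that if $G/P$ were non-abelian then the Sylow $q$-subgroup, lying in $[G,G]P\subseteq\ker$, would centralise $P$, making $M$ abelian, a contradiction. Hence $G/P$ is abelian, $\Gamma$ (whose image $\Gamma/P$ is then a normal Sylow $r$-subgroup of $G/P$) is normal, non-cocommutativity forces it non-abelian, and $r\mid(p-1)$. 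In both cases coprimality of $q$ and $r$ yields $rq\mid(p-1)$ and identifies $\Gamma$ as the unique non-abelian metacyclic group of its order.

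The step I expect to be the main obstacle is precisely this last one: controlling the interaction of the two matched-pair actions so as to extract \emph{both} divisibilities $q\mid(p-1)$ and $r\mid(p-1)$ rather than only one. The decisive observation is that, because $\mathrm{Aut}(\mathbb{Z}_{p})$ is abelian, any non-abelian quotient $G/P$ forces the inner prime's Sylow subgroup to act trivially on $P$; this rules out the spurious ``abelian $\Gamma$'' configurations and shows that a genuinely non-trivial Hopf algebra requires the full action of $\mathbb{Z}_{qr}$ on $\mathbb{Z}_{p}$. A secondary care point is establishing the commutativity/cocommutativity dictionary with the correct abelianness hypotheses, since normality of a subgroup of $G$ alone does not determine (co)commutativity of the bismash product.
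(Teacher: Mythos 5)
The paper does not actually prove this proposition --- it is imported wholesale from Natale's classification [Na3] --- so your reconstruction is being compared against a citation rather than an argument. On its own terms your proof is essentially correct, and the route is the natural one: the (co)commutativity dictionary for a split bismash product ($H$ commutative iff $F$ abelian and $F\triangleleft G$; $H$ cocommutative iff $\Gamma$ abelian and $\Gamma\triangleleft G$) is right, the normality of the Sylow $p$-subgroup $P$ is standard (and could be obtained more cheaply from Lemma \ref{NormalSubgroup}, which the paper already records: the unique normal subgroup $M$ of order $pq$ has a characteristic Sylow $p$-subgroup), and the two divisibilities are extracted correctly --- $q\mid(p-1)$ from the non-abelian normal subgroup of order $pq$, and $r\mid(p-1)$ from the observation that an order-$r$ automorphism of $M$ cannot die in the order-$p$ kernel of $\mathrm{Aut}(M)\to\mathrm{Aut}(P)$, together with the abelianness of the image of $G\to\mathrm{Aut}(P)$.

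One point deserves more care than your parenthetical gives it. The reduction ``I take $\Gamma$ to be the factor containing $P$'' is not a relabelling: $\Gamma$ and $F$ play asymmetric roles in the extension, and exchanging them replaces $H$ by $H^{*}$. Moreover the case $P\leq F$ genuinely occurs for a non-trivial $H$: the unique normal subgroup $M\cong\mathbb{Z}_p\rtimes\mathbb{Z}_q$ together with a Sylow $r$-subgroup gives an exact factorization with $F=M$, $\Gamma=\mathbb{Z}_r$, and $k^{\mathbb{Z}_r}\#k[\mathbb{Z}_p\rtimes\mathbb{Z}_q]\cong\mathcal{A}_p(r;q)$ is non-commutative and non-cocommutative. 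So the proposition, read literally as a statement about \emph{every} abelian-extension presentation of $H$, is false; it must be read as asserting that $H$ \emph{admits} a presentation of the stated form (equivalently, that $H^{*}$ does). Your argument proves exactly that after the duality step, and the divisibility $qr\mid(p-1)$ --- which is the part the rest of the paper actually uses --- is unaffected, but you should say explicitly that the case $P\leq F$ is handled by running the argument on $H^{*}$ and then dualizing the resulting presentation, not by ``relabelling'' the given one.
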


To the end of this section, let us assume $\Gamma:=\langle b,c\mid b^p=a^r=1,~aba^{-1}=b^{t}\rangle$, where $r\mid (p-1)$,~$t^r\equiv1 ~(mod~p)$, that is $\overline{t}$ is a $r$-th primitive root of unity in finite field $\mathbb{Z}_p$, where $\overline{t}:=t(mod~p)$.
\begin{pro}[\cite{Na3}]\label{fusioncategory}Let H be non-trivial semisimple Hopf algebra of dimension pqr, then H is isomorphic $\mathcal{A}_p(r; q)$ or $(\mathcal{A}_p(r; q))^{*}\cong \mathcal{A}_p(q; r)$ as a Hopf algebra, where semisimple  Hopf algebra $\mathcal{A}_p(r; q):=k^\Gamma\#k[\mathbb{Z}_q]$ associated to the action by group automorphisms $\Gamma\lhd \mathbb{Z}_q$ given by
$$b \lhd g=b^m,\quad a \lhd g = a,\quad b \rhd g= a \rhd g = g $$
 where g is a generator of $\mathbb{Z}_q$, and $m^q\equiv1 ~(mod~p)$.
\end{pro}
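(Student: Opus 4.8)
The plan is to reduce the statement to a purely group-theoretic analysis of the factorization group $G$. By the theorem of \cite{ENO2} recalled above, $H$ is a split abelian extension, hence a bismash product $H\cong k^\Gamma\#k[F]$ for some exact factorization $G=F\Gamma$ of a group of order $pqr$; and by Proposition \ref{DualHopfalgebras}, non-triviality forces $rq\mid(p-1)$ with $\Gamma$ the unique nonabelian group of order $pr$ and $F=\mathbb{Z}_q$. The symmetric possibility in Proposition \ref{DualHopfalgebras} ($\Gamma$ of order $pq$, $F=\mathbb{Z}_r$) is dual to this one and, through the identification $(k^\Gamma\#k[F])^*\cong k^F\#k[\Gamma]$, yields $\mathcal{A}_p(q;r)=(\mathcal{A}_p(r;q))^*$; so I would treat the order-$pr$ case and obtain the other by duality. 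Writing $\Gamma=\langle b,a\rangle$ and $F=\langle g\rangle\cong\mathbb{Z}_q$, the Hopf algebra $H$ is completely determined by how the complementary subgroups $\langle g\rangle$ and $\langle b,a\rangle$ sit inside $G$, i.e. by the matched pair.

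First I would invoke the standard criterion that a bismash product $k^\Gamma\#k[F]$ is commutative iff $F$ is abelian and normal in $G$, and cocommutative iff $\Gamma$ is abelian and normal in $G$. Since a commutative (resp. cocommutative) semisimple Hopf algebra is a function algebra (resp. group algebra), hence trivial, $H$ is non-trivial precisely when $\Gamma$ is nonabelian (automatic here) and $F=\langle g\rangle$ is \emph{not} normal in $G$. Next I would record two normal subgroups: $\langle b\rangle$ is the normal Sylow $p$-subgroup of $G$ (standard for order $pqr$ with $p$ the largest prime), and the Sylow $q$-subgroup of the order-$qr$ quotient $G/\langle b\rangle$ is normal because $q>r$; consequently $\langle b,g\rangle$ is a normal subgroup of $G$ of order $pq$.

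The heart of the argument is then to pin down the matched pair. If $\langle b,g\rangle$ were cyclic $\mathbb{Z}_{pq}$, its Sylow $q$-subgroup $\langle g\rangle$ would be characteristic and hence normal in $G$, making $H$ trivial; so non-triviality forces $\langle b,g\rangle\cong\mathbb{Z}_p\rtimes\mathbb{Z}_q$, i.e. $g$ acts on $b$ by $b\mapsto b^m$ with $m\not\equiv1\pmod p$. I would then show that $a$ and $g$ commute modulo $\langle b\rangle$. The conjugation action $G\to\mathrm{Aut}(\mathbb{Z}_p)\cong\mathbb{Z}_{p-1}$ has abelian target, and since $a$ acts nontrivially on $b$ one has $\langle b\rangle\subseteq[G,G]$; were $G/\langle b\rangle$ nonabelian, its commutator subgroup would be its Sylow $q$-subgroup $\langle\bar g\rangle$, so $[G,G]$ would contain $g$, forcing $g$ to act trivially on $b$ and contradicting $m\not\equiv1\pmod p$. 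Hence $G/\langle b\rangle\cong\mathbb{Z}_{qr}$, the subgroup $\Gamma=\langle b,a\rangle$ is normal, and $G=\Gamma\rtimes\langle g\rangle$ with $g$ acting by $b\mapsto b^m$ and $a\mapsto b^c a$ for some integer $c$.

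It remains to remove the apparent parameter $c$. Using that $1-t$ and $1-m$ are invertible modulo $p$, the automorphism $b\mapsto b^m$, $a\mapsto b^c a$ of $\Gamma$ is conjugate, by a suitable inner automorphism (conjugation by a power of $b$), to $b\mapsto b^m$, $a\mapsto a$; conjugate actions give isomorphic semidirect products, and hence isomorphic bismash Hopf algebras. This identifies $H$ with the matched pair defining $\mathcal{A}_p(r;q)$, and the dual case gives $\mathcal{A}_p(q;r)$. I expect the genuine obstacle to be exactly the regime $r\mid(q-1)$, where a nonabelian quotient $G/\langle b\rangle$ and a nontrivial $c$ threaten to produce new Hopf algebras; the ``characteristic Sylow-$q$'' observation and the inner-automorphism normalization of $c$ are precisely the two points that prevent this.
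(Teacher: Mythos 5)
The paper does not prove Proposition \ref{fusioncategory} at all: it is imported verbatim from \cite{Na3}, so there is no in-paper argument to compare against. Your reconstruction is essentially correct and is the natural route: reduce via the quoted theorem of \cite{ENO2} and Proposition \ref{DualHopfalgebras} to classifying matched pairs $(F,\Gamma)$ with $\Gamma$ nonabelian of order $pr$ and $F=\mathbb{Z}_q$, use the commutativity/cocommutativity criteria for bismash products to translate non-triviality into ``$\langle g\rangle$ is not normal,'' exploit the normal Sylow $p$-subgroup $\langle b\rangle$ and the normal subgroup $\langle b,g\rangle$ of order $pq$ to force $gbg^{-1}=b^m$ with $m\not\equiv 1$, deduce $[G,G]\subseteq C_G(b)=\langle b\rangle$ so that $\Gamma$ is normal and $gag^{-1}=b^ca$, and finally kill $c$ by conjugating the action by a power of $b$ (the key invertibility of $(1-t)(1-m)$ mod $p$ is exactly right). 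Two points deserve tightening. First, the phrase ``$[G,G]$ would contain $g$'' should read ``a nontrivial $q$-element,'' and since every nontrivial $q$-element is conjugate to a power of $g$ and therefore acts on $b$ by the same nontrivial exponent, it cannot lie in $C_G(b)$; in fact you can skip the contradiction entirely, since $C_G(b)=\langle b\rangle$ already gives $[G,G]\subseteq\langle b\rangle$ directly. Second, the ``obtain the other case by duality'' shortcut is slightly circular as stated: dualizing $k^\Gamma\#k[\mathbb{Z}_r]$ with $|\Gamma|=pq$ produces an extension of the form $k^{\mathbb{Z}_r}\#k[\Gamma]$, not one with the order-$pr$ normal factor, so you either need to rerun the (identical) direct argument for $|\Gamma|=pq$, $F=\mathbb{Z}_r$ --- which works, since an order-$r$ automorphism of $\mathbb{Z}_p\rtimes\mathbb{Z}_q$ necessarily acts nontrivially on the Sylow $p$-subgroup --- or invoke Proposition \ref{DualHopfalgebras} a second time for $H^*$. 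With those two repairs the proof is complete.
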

It is obviously  group $G=F\bowtie\Gamma\cong \mathbb{Z}_p\rtimes \mathbb{Z}_{qr}$, where the action $\rtimes$  is the conjugate action, and the Hopf algebra structures of $\mathcal{A}_p(r; q)=k^\Gamma\#k[\mathbb{Z}_q]$ is given by following:
\begin{gather*}
(e_g\#x)(e_h\#y) =\delta_{g\lhd x,h}e_g\#xy,\quad \Delta(e_g\#x) =\sum_{st=g} e_s\# x \otimes e_t\#x.
\end{gather*}
\section{Monoidally Morita equivalent classes}\label{monMorita}

We first consider the Galois objects of these semisimple Hopf algebras of dimension $pqr$ ($p,q,r$ are three distinct primes, $r<q<p$), and then to find the cocycle deformations(or Drinfeld twists) on them.

First of all, we consider the trivial Hopf algebras, i.e. Hopf algebras that are isomorphic to  group algebras  or their dual Hopf algebras.

In fact, there is  a more general result for all finite-groups of square-free.
\begin{pro}\label{group}For group algebras $k[G]$ or dual Hopf algebras $k^G$, where $G$ a is suqare-free finite group, they only have one trivial Galois objects.
\end{pro}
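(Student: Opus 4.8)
The plan is to use the classification of fiber functors given in Theorem \ref{fiberfunctor} together with the bijection from Proposition \ref{Oneonecorrespondence} between Galois objects of $H^*$ and fiber functors on $Rep(H)$. Since $G$ is square-free, every subgroup of $G$ is itself a square-free finite group, hence metacyclic by the results of the preliminary section, and more importantly all Schur multipliers vanish. I would treat the two cases $k[G]$ and $k^G$ uniformly by passing to the associated group-theoretical data. For the dual Hopf algebra $k^G$ we have $Rep(k^G)\cong Vec_G$ (the pointed fusion category with trivial associator $\omega=1$), which is $\mathcal{C}(G,1,G,1)$ in the group-theoretical notation; for the group algebra $k[G]$ we have $Rep(k[G])\cong Rep(G)$, which is group-theoretically realized as $\mathcal{C}(G,1,1,1)$ (i.e. with $F=\{1\}$). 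In both cases $\omega=1$, so condition $(1)$ of Theorem \ref{fiberfunctor} is automatic for every subgroup $L$.

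The key step is to enumerate the admissible pairs $(L,\beta)$ and then count isomorphism classes. For $k^G$, where $F=G$, condition $(2)$ reads $G=LG$, which holds for every subgroup $L$, and condition $(3)$ requires that the class of $\beta^{-1}|_{L}$ be non-degenerate on $L$ (since $F\cap L=L$ and $\alpha=1$). But $L$ is square-free, hence metacyclic with cyclic Sylow subgroups, so by Corollary \ref{trivialSchur} and the computation of Schur multipliers of metacyclic groups we have $H^2(L,k^*)=0$; the \emph{only} $2$-cocycle class on $L$ is trivial, and a trivial cocycle is non-degenerate precisely when $L=\{1\}$. Thus the only admissible pair is $(L,\beta)=(\{1\},1)$, giving exactly one fiber functor, hence one Galois object. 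For $k[G]$, where $F=\{1\}$, condition $(2)$ reads $G=L\cdot\{1\}=L$, forcing $L=G$; condition $(3)$ requires $\beta^{-1}|_{L\cap F}=\beta^{-1}|_{\{1\}}$ to be non-degenerate, which is vacuous, while condition $(1)$ needs $\beta$ (a $2$-cocycle on $G$) to be trivializable, and again $H^2(G,k^*)=0$ since $G$ is square-free. So the only admissible pair is $(G,1)$, again a single fiber functor.

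The main obstacle I anticipate is being careful about the non-degeneracy condition $(3)$ in each case and making sure no nontrivial subgroup sneaks in. The crucial input that closes the argument is the vanishing of $H^2(L,k^*)$ for \emph{every} subgroup $L$ of $G$: this is exactly where the square-free hypothesis is used, via the metacyclic structure theorem (all Sylow subgroups cyclic) and the Schur multiplier computation in Corollary \ref{trivialSchur}. With all Schur multipliers trivial, the only candidate cocycle $\beta$ on any $L$ is the trivial one, and a trivial $2$-cocycle on $L$ yields a simple matrix algebra $k_\beta[L]=k[L]$ only when $L$ is trivial; this simultaneously rules out all proper nontrivial $L$ in the $k^G$ case and pins down the unique pair in the $k[G]$ case. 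It then remains to observe, using the isomorphism criterion in the second half of Theorem \ref{fiberfunctor}, that the single surviving pair gives genuinely one isomorphism class, so the Galois object is the trivial one. Via Theorem \ref{Drinfeldtwist} this shows the corresponding Drinfeld twist is trivial as well.
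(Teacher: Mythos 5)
Your argument is correct and follows essentially the same route as the paper: reduce to counting fiber functors via Proposition \ref{Oneonecorrespondence}, apply the classification in Theorem \ref{fiberfunctor}, and use the square-free hypothesis (via vanishing Schur multipliers of all subgroups, so the only cocycle is trivial and a trivial cocycle is non-degenerate only on the trivial subgroup) to eliminate every nontrivial pair $(L,\beta)$. One harmless bookkeeping slip: Proposition \ref{Oneonecorrespondence} pairs Galois objects of $H^{*}$ with fiber functors on $Rep(H)$, so your computation on $Rep(k^G)\cong Vec_G$ in fact counts Galois objects of $k[G]$ and vice versa --- but since both counts come out to exactly one, the conclusion for both Hopf algebras is unaffected.
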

\begin{proof}As explained in section \ref{Preliminaries}, Galois objects on $k^G$ are in one-to-one correspondence with the fiber functors $Rep(G)$ by proposition \ref{Oneonecorrespondence}. While Fiber functors on $Rep(G)$ are term like $\mathcal{M}(L,\alpha)$, where $L\subseteq G$ is a subgroup, $\alpha\in H^2(S,k^{*})$ is a non-degenerate $2$-cocycle, so that  $\mathcal{M}(L,\alpha)\cong Vec$ by Theorem \ref{fiberfunctor}. As $G$ is square-free, then such subgroup only can be trivial group, i.e. there is only trivial Galois object.

For Galois objects of $k[G]$, they are in one-to-one correspondence with the fiber functors on fusion category $Rep(k^G)\cong \mathcal{M}^{k[G]}=Vec_G$. And as $\mathcal{C}^*_\mathcal{C}\cong\mathcal{C}^{op}$ \cite{EGNO}, whence $Vec_G=\mathcal{C}(G,1,G,1)$. Using Theorem \ref{fiberfunctor}, which shows fiber functors in turn are in one-to-one correspondence with conjugacy classes of pairs $(S,\alpha)$, where  $S\subseteq G$ is a subgroup, $\alpha\in H^2(S,k^{*})$ is a non-degenerate $2$-cocycle. As $G$ is square-free, therefore there only have one trivial Galois object on $k[G]$.
\end{proof}
\begin{cor}\label{trivial}If $H$ is a semisimple Hopf algebra, and $Rep(H)\cong Rep(G)$, where G is of square-free order. Then $H\cong k[G]$. Dually, if $\mathcal{M}^H\cong \mathcal{M}^{{k^G}}$, then $H\cong k^G$.
\end{cor}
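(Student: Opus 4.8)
The plan is to combine Schauenburg's twist-equivalence theorem with the uniqueness result of Proposition \ref{group}. Since the hypothesis $Rep(H)\cong Rep(G)=Rep(k[G])$ is a tensor equivalence, Schauenburg's theorem (recorded in Theorem \ref{Drinfeldtwist} and the discussion preceding it) tells me that $H$ and $k[G]$ differ by a Drinfeld twist, i.e. $H\cong (k[G])^J$ as Hopf algebras for some twist $J\in k[G]\otimes k[G]$. So the whole problem reduces to showing that $k[G]$ admits only the trivial twist, up to the gauge equivalence that produces isomorphic twisted Hopf algebras.

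For that I would use the chain of bijections in Proposition \ref{Oneonecorrespondence}: the Drinfeld twists on $k[G]$ are in bijection with the fiber functors on $Rep(k[G])=Rep(G)$, which are in turn in bijection with the right Galois objects of $(k[G])^{*}=k^G$. Since $G$ is square-free, Proposition \ref{group} asserts that $k^G$ has exactly one (trivial) Galois object; hence $Rep(G)$ carries exactly one fiber functor and $k[G]$ exactly one twist class, namely the trivial one. Consequently $(k[G])^J\cong k[G]$, and therefore $H\cong k[G]$, which proves the first assertion.

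For the dual statement I would pass to duals. Using Ulbrich's identification $\mathcal{M}^H\cong Rep(H^{*})$ (and likewise $\mathcal{M}^{k^G}\cong Rep((k^G)^{*})=Rep(k[G])=Rep(G)$), the hypothesis $\mathcal{M}^H\cong\mathcal{M}^{k^G}$ becomes a tensor equivalence $Rep(H^{*})\cong Rep(G)$. Since $H$ is semisimple, so is $H^{*}$ (by Larson--Radford, the dual of a semisimple Hopf algebra in characteristic zero is again semisimple), so the first part applies with $H$ replaced by $H^{*}$ and yields $H^{*}\cong k[G]$; dualizing gives $H\cong (k[G])^{*}=k^G$.

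The step requiring the most care is the reduction carried out in the first paragraph: I must ensure that the ``unique Galois object'' output of Proposition \ref{group} is transported correctly across the three bijections of Proposition \ref{Oneonecorrespondence}, so that it genuinely forces the Hopf-algebra isomorphism $(k[G])^J\cong k[G]$. Concretely this means checking that the trivial fiber functor (the forgetful functor) corresponds to the trivial twist and that gauge-equivalent twists yield isomorphic twisted Hopf algebras, so that a single twist class collapses to a single isomorphism type. Once that bookkeeping is in place, everything else is a formal consequence of the cited theorems.
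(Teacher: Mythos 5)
Your proposal is correct and follows essentially the same route as the paper: both invoke Schauenburg's theorem to reduce to showing $k[G]$ has only the trivial twist (equivalently, $k^G$ has only the trivial Galois object/cocycle), and both settle that via Proposition \ref{group} together with the correspondences of Proposition \ref{Oneonecorrespondence}; the dual statement is handled by passing to $H^{*}$ exactly as the paper indicates.
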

\begin{proof}If $Rep(H)\cong Rep(G)$, then $H\cong k[G]^J$ for some Drinfeld twist by Theorem \ref{Drinfeldtwist} . Therefore, $H^{*}\cong (k^G)^{\sigma}$ for some cocycle $\sigma$. By proposition \ref{group} , we know that the dual group algebra $k^G$ has no  non-trivial Galois objects, so the cocycle deformation is trivial, that is $(k^G)^{\sigma}\cong k^G$ as Hopf algebras, therefore, $H\cong k[G]$ as Hopf algebras. The proof of the second statement is same.
\end{proof}
By the classification of triangular semisimple Hopf algebras of \cite{EG1}, we have
\begin{cor}Any non-trivial semisimple Hopf algebras of dimension square-free can not be triangular.
\end{cor}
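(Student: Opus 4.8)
The plan is to establish the contrapositive: a triangular semisimple Hopf algebra of square-free dimension must already be trivial, i.e. a group algebra. So I would start by assuming $(H,R)$ is a triangular semisimple Hopf algebra with $\dim H = n$ square-free, and feed this into the Etingof--Gelaki classification \cite{EG1}, whose upshot is that every triangular semisimple Hopf algebra over $k$ is obtained by twisting a group algebra. Concretely, this produces a finite group $G$ and a Drinfeld twist $J$ with $H \cong k[G]^J$ as Hopf algebras. Since a twist only deforms the comultiplication and leaves the underlying algebra untouched, we get $|G| = \dim k[G] = \dim H = n$, so $G$ is a group of square-free order.

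Next I would pass to representation categories. Because $H$ and $k[G]$ differ precisely by the twist $J$, Schauenburg's theorem (Theorem \ref{Drinfeldtwist}) gives a tensor equivalence $Rep(H) \cong Rep(k[G]^J) \cong Rep(k[G]) = Rep(G)$ of fusion categories. At this point the hypothesis of square-freeness is exactly what Corollary \ref{trivial} consumes: a semisimple Hopf algebra $H$ with $Rep(H) \cong Rep(G)$ for $G$ of square-free order must satisfy $H \cong k[G]$. Hence $H$ is a group algebra, contradicting the assumption that $H$ is non-trivial, and the corollary follows.

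The one point that needs care is invoking \cite{EG1} in the correct form. The Etingof--Gelaki classification separates an \emph{even} case, where $Rep(H)$ is genuinely Tannakian, from a \emph{super} case built from a distinguished central involution of $G$. I would check that in both cases the underlying Hopf algebra is a twist of an \emph{ordinary} group algebra $k[G]$, the super data entering only through the triangular $R$-matrix and not through the Hopf structure itself; this is what guarantees that the dimension count and the applications of Theorem \ref{Drinfeldtwist} and Corollary \ref{trivial} go through unchanged. Everything else is formal: dimension is preserved under twisting, the equivalence $Rep(k[G]^J) \cong Rep(G)$ is immediate from Schauenburg, and the square-free hypothesis does all the real work through Corollary \ref{trivial}. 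The main obstacle, therefore, is purely one of correctly extracting and citing the structural statement of \cite{EG1}, rather than any genuine computation.
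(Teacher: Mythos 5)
Your argument is correct and is exactly the one the paper intends: it gives no written proof, merely prefacing the corollary with ``By the classification of triangular semisimple Hopf algebras of \cite{EG1}'', and the implicit reasoning is precisely your chain $H\cong k[G]^J \Rightarrow Rep(H)\cong Rep(G) \Rightarrow H\cong k[G]$ via Corollary \ref{trivial}. Your added care about the super/Tannakian dichotomy in \cite{EG1} is sensible diligence but does not change the route.
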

In fact, any odd-dimensional square-free braided fusion category is tensor equivalent to  the representation category of finite group, see \cite{BrNa}.
\begin{lem}[\cite{Na3}] \label{NormalSubgroup}Let G be a group of order pqr. Then G has a unique $($normal$)$ subgroup H of order pq.
\end{lem}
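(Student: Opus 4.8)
The statement is purely about the abstract group $G$ of order $pqr$ with $r<q<p$, so the plan is to argue entirely with the Sylow theorems. The crux, and the step I expect to be the main obstacle, is to show that the Sylow $p$-subgroup $P$ is normal. By Sylow's theorems the number $n_p$ of Sylow $p$-subgroups satisfies $n_p\mid qr$ and $n_p\equiv 1\pmod p$; since $1<r<q<p$, neither $q$ nor $r$ is congruent to $1$ modulo $p$, so $n_p\in\{1,qr\}$. I would argue by contradiction, assuming $n_p=qr$. Then $G$ has exactly $qr(p-1)=pqr-qr$ elements of order $p$, leaving precisely $qr$ elements of order prime to $p$ (the identity included). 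Counting Sylow $q$-subgroups, $n_q\mid pr$ and $n_q\equiv 1\pmod q$, and as $1<r<q$ we get $n_q\in\{1,p,pr\}$, so a nontrivial $n_q$ is at least $p$. But then the $n_q(q-1)\ge p(q-1)$ elements of order $q$ would have to fit among the $qr-1$ nonidentity elements of order prime to $p$, and $p(q-1)\ge(q+1)(q-1)=q^2-1>q^2-q-1\ge qr-1$ (using $p\ge q+1$ and $r\le q-1$), a contradiction; hence $n_q=1$ and the Sylow $q$-subgroup $Q$ is normal.

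Next, $PQ$ is a subgroup of order $pq$, and inside it $P$ is a Sylow $p$-subgroup of a group of order $pq$ with $p>q$, hence normal in $PQ$; thus $PQ\subseteq N_G(P)$ and $|N_G(P)|\ge pq$. On the other hand $n_p=[G:N_G(P)]=qr$ forces $|N_G(P)|=p<pq$, a contradiction. I emphasize that this normalizer argument is the essential point: a pure element count does not by itself rule out $n_p=qr$, since the remaining $qr$ elements can be distributed consistently, so one really needs the subgroup $PQ$ to pin down $N_G(P)$. Therefore $n_p=1$ and $P\trianglelefteq G$ is the unique Sylow $p$-subgroup.

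Finally I would pass to the quotient $\overline{G}:=G/P$, a group of order $qr$ with $q>r$; its number of Sylow $q$-subgroups $\overline{n}_q$ satisfies $\overline{n}_q\mid r$ and $\overline{n}_q\equiv 1\pmod q$, so $\overline{n}_q=1$ and there is a unique normal subgroup $\overline{Q}\trianglelefteq\overline{G}$ of order $q$. Its preimage $H$ under the projection $G\to\overline{G}$ is then a normal subgroup of $G$ of order $pq$. For uniqueness, any subgroup $H'$ of order $pq$ must contain a Sylow $p$-subgroup of $G$ (since $p\mid|H'|$ while $p^2\nmid pqr$), which can only be $P$; hence $P\le H'$ and $H'/P$ is a subgroup of order $q$ in $\overline{G}$, i.e. a Sylow $q$-subgroup, so $H'/P=\overline{Q}$ and $H'=H$. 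This yields the unique normal subgroup of order $pq$, completing the plan.
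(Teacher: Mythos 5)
Your proof is correct, and it is worth noting that the paper itself offers no argument for this lemma at all: it is simply cited from [Na3]. So you are supplying a self-contained elementary proof where the paper defers to the literature. Your chain of reasoning holds up at every step: the reduction $n_p\in\{1,qr\}$ is right since $1<q,r<p$ rules out $q$ or $r$ being $\equiv 1\pmod p$; the element count under the hypothesis $n_p=qr$ correctly forces $n_q=1$ (the inequality $p(q-1)\ge q^2-1>qr-1$ uses $p\ge q+1$ and $r\le q-1$ legitimately); and you are right to flag that counting alone cannot finish the job --- with $n_p=qr$ and, say, $n_r=q$ the remaining $qr$ elements can be filled exactly, so the normalizer argument via the subgroup $PQ$ (of order $pq$, in which $P$ is automatically normal since $q<p$, forcing $|N_G(P)|\ge pq$ against $|N_G(P)|=p$) is genuinely needed. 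The passage to $G/P$ of order $qr$ with $r<q$ gives the unique normal subgroup of order $q$, and the uniqueness argument for $H$ (any $H'$ of order $pq$ must contain the unique Sylow $p$-subgroup $P$ and project onto the unique Sylow $q$-subgroup of $G/P$) is clean. A common alternative route in the literature first shows by a single global count that \emph{some} Sylow subgroup of $G$ is normal and then climbs up through quotients; your version goes straight for the normality of $P$, which is slightly longer at the first step but makes the uniqueness of $H$ immediate.
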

\begin{pro}\label{thefirstcase}For semisimple Hopf algebra $\mathcal{A}_p(q;r)$, it only have trivial Galois object.
\end{pro}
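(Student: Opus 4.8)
The plan is to transport the question from Galois objects to fiber functors, and then to count pairs $(L,\beta)$ via Theorem \ref{fiberfunctor}. Since $(\mathcal{A}_p(r;q))^{*}\cong\mathcal{A}_p(q;r)$, Proposition \ref{Oneonecorrespondence} identifies the right Galois objects of $\mathcal{A}_p(q;r)$ with the fiber functors on $Rep(\mathcal{A}_p(r;q))$. By Proposition \ref{fusioncategory}, $\mathcal{A}_p(r;q)=k^{\Gamma}\#k[\mathbb{Z}_q]$ arises from a \emph{splitting} abelian extension: the cocycles $\tau,\sigma$ appearing in its product and coproduct are trivial, so the associated $3$-cocycle $\omega(\tau,\sigma)$ vanishes. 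The theorem of Natale \cite{Na1} then gives $Rep(\mathcal{A}_p(r;q))\cong \mathcal{C}(G,1,F,1)$ as fusion categories, where $F=\mathbb{Z}_q$ and $G=F\bowtie\Gamma\cong\mathbb{Z}_p\rtimes\mathbb{Z}_{qr}$ is a group of order $pqr$.

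First I would apply Theorem \ref{fiberfunctor} with $\omega=1$ and $\alpha=1$. Condition $(1)$ is then automatic. For condition $(3)$, observe that $L\cap F$ is a subgroup of the prime-order cyclic group $F=\mathbb{Z}_q$, hence equals $1$ or $\mathbb{Z}_q$. Since $H^2(\mathbb{Z}_q,k^{*})=0$, the only cocycle on $\mathbb{Z}_q$ is trivial and $k[\mathbb{Z}_q]\cong k^{q}$ is not a matrix algebra, so the case $L\cap F=\mathbb{Z}_q$ is degenerate. Thus condition $(3)$ forces $L\cap F=1$, after which condition $(2)$, namely $G=LF$, yields $|L|\,q=|LF|=pqr$, i.e. $|L|=pr$.

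Next I would pin down the subgroups $L\le G$ of order $pr$. Because $p$ is the largest prime, the Sylow $p$-subgroup $P=\langle b\rangle$ is normal and unique, hence contained in every $L$ of order divisible by $p$; then $L/P$ must be the unique subgroup of order $r$ in the cyclic quotient $G/P\cong\mathbb{Z}_{qr}$, so $L=\langle a,b\rangle=\Gamma$ is the only candidate, and it indeed satisfies $L\cap F=1$ since $\gcd(pr,q)=1$. Finally $\beta$ ranges over $H^2(\Gamma,k^{*})$, which is trivial because $\Gamma$ has order $pr$, a product of two distinct primes (Corollary \ref{trivialSchur}). Hence $(L,\beta)=(\Gamma,1)$ is the unique pair satisfying $(1)$--$(3)$, producing a single fiber functor and therefore a single Galois object; as the trivial Galois object always exists, this unique one must be the trivial one.

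The bulk of the computation is routine once the reduction is set up; the step requiring the most care is the non-degeneracy analysis of condition $(3)$, where one must rule out $L\cap F=\mathbb{Z}_q$ by invoking the vanishing of the Schur multiplier of a cyclic group, together with the verification that $G$ genuinely has a \emph{unique} subgroup of order $pr$. I would also double-check the identification of which factor plays the role of $F$ in Natale's theorem (here $F=\mathbb{Z}_q$, not $\Gamma$), since interchanging the two factors would alter conditions $(2)$--$(3)$ and the counting.
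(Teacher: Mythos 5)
Your proof is correct and runs on the same machinery as the paper's: realize the representation category as a group-theoretical category $\mathcal{C}(G,1,F,1)$, apply Theorem \ref{fiberfunctor}, show the non-degeneracy condition forces $L\cap F=1$ so that $L$ is a complement of $F$, check there is a unique such subgroup, and kill $\beta$ with Corollary \ref{trivialSchur}. The one genuine difference is which side of the duality you compute on: you take fiber functors on $Rep(\mathcal{A}_p(r;q))$, which is what Proposition \ref{Oneonecorrespondence} literally prescribes for Galois objects of $\mathcal{A}_p(q;r)=(\mathcal{A}_p(r;q))^{*}$, and so you look for subgroups of order $pr$ in $G\cong\mathbb{Z}_p\rtimes\mathbb{Z}_{qr}$ complementary to $F=\mathbb{Z}_q$ --- in content this coincides with the paper's proof of the \emph{subsequent} proposition. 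The paper's own proof of this statement instead counts fiber functors on $Rep(\mathcal{A}_p(q;r))$ itself, with $F=\mathbb{Z}_r$ and subgroups of order $pq$ via Lemma \ref{NormalSubgroup}; read strictly against Proposition \ref{Oneonecorrespondence} that computes the Galois objects of the dual Hopf algebra, but since both $\mathcal{A}_p(q;r)$ and $\mathcal{A}_p(r;q)$ are treated, nothing is lost overall, and your version is the more literally correct application of the correspondence. Two minor points: your justification that the Sylow $p$-subgroup is normal ``because $p$ is the largest prime'' is not a valid general principle, but it is immediate here since $G$ is by construction a semidirect product with $\mathbb{Z}_p$ normal (or from Sylow's theorem using $qr\mid p-1$, hence $p>qr$); and your careful elimination of $L\cap F=\mathbb{Z}_q$ via the non-degeneracy of the twisted group algebra is a step the paper passes over silently.
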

\begin{proof}Since the Hopf algebra $\mathcal{A}_p(q;r)$ is obtained by splitting abelian extension $k\rightarrow k^\Gamma\rightarrow \mathcal{A}_p(q;r)\rightarrow k[F]\rightarrow k$, $\Gamma=\mathbb{Z}_p\rtimes \mathbb{Z}_q$, $F=\mathbb{Z}_r$ by proposition \ref{fusioncategory} .Therefore, the representation category $Rep(\mathcal{A}_p(q;r))$ is equivalent to $ \mathcal{C}(G,1,F,1)$ as fusion categories, where $G:=F\bowtie\Gamma=\mathbb{Z}_r\bowtie(\mathbb{Z}_p\rtimes \mathbb{Z}_q)$. Since $F=\mathbb{Z}_r$, by Theorem \ref{fiberfunctor} , any fiber functor on fusion category $\mathcal{C}(G,1,F,1)$ is corresponding to a subgroup $S$ of order $pq$ and a non-degenerate 2-cocycle $\alpha_{\mid \Gamma\cap F}$, where $\alpha\in H^2(\Gamma,k^{*})$. But we know $H^2(\Gamma, k^{*})=0$ by corollary \ref{trivialSchur} and there exists only one subgroup of $G$ of order $pq$ by lemma \ref{NormalSubgroup}, so there is only one trivial Galois object of $\mathcal{A}_p(q;r)$.
\end{proof}
\begin{pro}For Hopf algebra $\mathcal{A}_p(r;p)$, it only has exactly one trivial Galois objects.
\end{pro}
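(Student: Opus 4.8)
The plan is to mirror the computation of Proposition~\ref{thefirstcase}, but with the roles of the primes $q$ and $r$ interchanged. The Hopf algebra in question is the dual $(\mathcal{A}_p(q;r))^{*}\cong\mathcal{A}_p(r;q)=k^{\Gamma}\#k[\mathbb{Z}_q]$, where now $\Gamma=\mathbb{Z}_p\rtimes\mathbb{Z}_r$ has order $pr$ and $F=\mathbb{Z}_q$. Since this too is a splitting abelian extension, Proposition~\ref{fusioncategory} together with the extension theorem gives $Rep(\mathcal{A}_p(r;q))\cong\mathcal{C}(G,1,F,1)$ as fusion categories, with $G=F\bowtie\Gamma\cong\mathbb{Z}_p\rtimes\mathbb{Z}_{qr}$ and $F=\mathbb{Z}_q$. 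By Proposition~\ref{Oneonecorrespondence} the Galois objects of $\mathcal{A}_p(r;q)$ are counted by the isomorphism classes of fiber functors on this category, which by Theorem~\ref{fiberfunctor} correspond to pairs $(L,\beta)$ with $L\le G$ a subgroup and $\beta\in H^2(L,k^{*})$.

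First I would read off the constraints of Theorem~\ref{fiberfunctor} in the case $\omega=1$, $\alpha=1$: they reduce to $G=LF$ together with non-degeneracy of $\beta^{-1}|_{L\cap F}$. Because $F=\mathbb{Z}_q$ has prime order, $L\cap F$ is either trivial or all of $F$; the second possibility is excluded, since a cyclic group has trivial Schur multiplier (Corollary~\ref{trivialSchur}) and hence admits no non-degenerate $2$-cocycle. Thus $L\cap F=\{e\}$, and $G=LF$ forces $|L|=pqr/q=pr$. Finally, a group of order $pr$ is a product of two distinct primes, so $H^2(L,k^{*})=0$; therefore $\beta$ is cohomologically trivial and every admissible pair has the form $(L,1)$ with $|L|=pr$.

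It remains to count such $L$ up to the equivalence of Theorem~\ref{fiberfunctor}, and this is the one step that genuinely differs from Proposition~\ref{thefirstcase}: there Lemma~\ref{NormalSubgroup} supplies a \emph{unique} subgroup of order $pq$, whereas $G$ need not have a unique subgroup of order $pr$. Instead I would show that all such subgroups form a single conjugacy class. Since $G\cong\mathbb{Z}_p\rtimes\mathbb{Z}_{qr}$ has a normal (hence unique) Sylow $p$-subgroup $P=\mathbb{Z}_p$, every subgroup of order $pr$ contains $P$ and has the form $L=PR$ for a Sylow $r$-subgroup $R$ of $G$; by Sylow's theorem any two such $R$ are conjugate, and normality of $P$ then yields $PR'=g(PR)g^{-1}$, so all order-$pr$ subgroups are conjugate to $\Gamma$. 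Combined with $H^2(L,k^{*})=0$, the isomorphism criterion of Theorem~\ref{fiberfunctor} identifies all the pairs $(L,1)$, leaving a single isomorphism class of fiber functor, namely the forgetful one attached to $(\Gamma,1)$; equivalently $\mathcal{A}_p(r;q)$ has exactly one, trivial, Galois object. The main obstacle is precisely this conjugacy step: one must replace the uniqueness used before by the weaker statement that the order-$pr$ subgroups lie in one conjugacy class, and then invoke the vanishing of $H^2(L,k^{*})$ to collapse that class to a single fiber functor; everything else is a routine specialization of Theorem~\ref{fiberfunctor}.
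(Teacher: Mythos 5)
Your proof is correct and follows essentially the same route as the paper: identify $Rep(\mathcal{A}_p(r;q))$ with $\mathcal{C}(G,1,\mathbb{Z}_q,1)$ for $G\cong\mathbb{Z}_p\rtimes\mathbb{Z}_{qr}$, use Theorem \ref{fiberfunctor} and the vanishing of the relevant Schur multipliers to reduce to counting subgroups of order $pr$ up to conjugacy, and conclude there is a single admissible pair. The only (harmless) divergence is in the final counting step, where the paper shows the order-$pr$ subgroup is literally unique via an element-order argument, while you establish the weaker but equally sufficient fact that all such subgroups form one conjugacy class via Sylow theory.
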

\begin{proof}First of all, by Proposition \ref{fusioncategory} we have fusion categories equivalence $Rep(\mathcal{A}_p(r;p))\cong \mathcal{C}(G,1,F,1)$, where $G:=\mathbb{Z}_q\bowtie(\mathbb{Z}_p\rtimes \mathbb{Z}_r)$, $F=\mathbb{Z}_q$, $\Gamma=\mathbb{Z}_p\rtimes \mathbb{Z}_r$.  As proposition \ref{thefirstcase}, we only need to find all the conjugation classes of subgroups $L \subseteq G$ of order $pr$, since the Schur multiplier $H^2(L,k^{*})=0$ for $L=\mathbb{Z}_p\rtimes \mathbb{Z}_r$ or $\mathbb{Z}_p\times\mathbb{Z}_r$
by corollary \ref{trivialSchur}.

From the action of $\mathbb{Z}_q$, we know the subgroup $\mathbb{Z}_p\rtimes \mathbb{Z}_r$ is a normal subgroup of $G$. In this case, since the action of $\mathbb{Z}_q$ on $\mathbb{Z}_r$ is trivial, the group $G\cong \mathbb{Z}_p\rtimes \mathbb{Z}_{qr}$, using Sylow theorem, we know there is only one normal subgroup of order $p$; for any subgroup $U$ of order $pr$, there must be  an element of the form $b^ic^j\neq 1$  (where $\mathbb{Z}_q=\langle b\rangle$,~$\mathbb{Z}_r=\langle c\rangle$, $i,~j$ are non-negative integers, and $i+j\neq0$), while $bc=cb$, so if $i,~j>0$, this element has order $qr$, therefore  subgroup $U$ has order $pqr$, impossible. Hence, $U=\mathbb{Z}_p\rtimes \mathbb{Z}_r$, that is it only have one normal subgroup of order $qr$, and the Hopf algebra only have trivial Galois object.
\end{proof}

\begin{thm}\label{monoidalM}For any semisimple Hopf algebra of dimension $pqr$, it does not admit any non-trivial Drinfeld twist or non-trivial cocycle deformation. That is, they are  monoidally Morita inequivalent to each other.
\end{thm}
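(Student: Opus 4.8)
The plan is to reduce the theorem to the Galois-object computations already carried out in Propositions \ref{group} and \ref{thefirstcase} and in the preceding proposition, and then transport those conclusions across the bijections of Proposition \ref{Oneonecorrespondence}. First I would invoke the classification recalled in Section \ref{Semsimplepqr}: by the theorem of \cite{ENO2} together with Proposition \ref{fusioncategory}, every semisimple Hopf algebra $H$ of dimension $pqr$ is isomorphic to one of $k[G]$ or $k^G$, where $G$ is a group of order $pqr$ (hence of square-free order), or to $\mathcal{A}_p(r;q)$, or to its dual $\mathcal{A}_p(q;r)$. Each of these four families has just been shown to possess exactly one, necessarily trivial, Galois object.

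The key structural observation is that this list is closed under taking duals: $(k[G])^{*}\cong k^G$, and $(\mathcal{A}_p(r;q))^{*}\cong\mathcal{A}_p(q;r)$. Consequently, for \emph{every} $H$ in the class, both $H$ and $H^{*}$ have only the trivial Galois object. I would then apply Proposition \ref{Oneonecorrespondence} in two directions. Applied to $H$, triviality of the Galois objects of $H^{*}$ forces $H$ to admit no non-trivial Drinfeld twist. Applied to $H^{*}$ (using $(H^{*})^{*}\cong H$), triviality of the Galois objects of $H$ forces $H^{*}$ to admit no non-trivial Drinfeld twist, which by the duality between twists and cocycle deformations means $H$ admits no non-trivial $2$-cocycle deformation. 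Together these give both halves of the first assertion.

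For the monoidal Morita statement I would feed this into Theorem \ref{Drinfeldtwist}: since each $H$ admits only the trivial cocycle $\sigma$, we have $H^{\sigma}\cong H$, so $H$ is monoidally Morita equivalent to $L$ exactly when $H\cong L$ as Hopf algebras. Hence any two non-isomorphic members of the class are monoidally Morita inequivalent, and equivalently Drinfeld-twist inequivalent.

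The step demanding the most care—more a bookkeeping subtlety than a genuine obstacle—is keeping the two correspondences straight: Drinfeld twists on $H$ are governed by the Galois objects of $H^{*}$, whereas cocycle deformations of $H$ are governed by the Galois objects of $H$ itself. It is precisely the closure of the class under duality that allows a single family of Galois-object computations to control both kinds of deformation at once; without it one would have to verify the dual computations separately.
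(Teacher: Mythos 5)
Your proposal is correct and follows essentially the same route as the paper: reduce to the Galois-object computations of Propositions \ref{group}, \ref{thefirstcase} and the one for $\mathcal{A}_p(r;q)$, use Corollary \ref{trivial} for the trivial Hopf algebras, and exploit closure of the class under duality together with Proposition \ref{Oneonecorrespondence} and Theorem \ref{Drinfeldtwist}. Your explicit bookkeeping of which dual's Galois objects control twists versus cocycle deformations is a slightly more careful rendering of the same argument.
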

\begin{proof}The statement is proved for group algebras and their dual Hopf algebras in corollary \ref{trivial} . For Hopf algebras $\mathcal{A}_p(q;r)$ and $\mathcal{A}_p(r;q)$, as they has only one trivial Galois object, so it does not have any non-trivial cocycle deformations, and the dual Hopf algebras  do not have any non-trivial Drinfeld twist. And  $(\mathcal{A}_p(q;r))^{*}\cong \mathcal{A}_p(r;q)$ as Hopf algebras, so we finish the proof.
\end{proof}

\section{Categorically Morita equivalent classes}\label{catMorita}
Although semisimple Hopf algebras of dimension $pqr$ are not monoidally Morita equivalent to each other by Theorem \ref{monoidalM}, they might be  categorically Morita equivalent. The next interesting question is how many  categorically Morita equivalent classes for semisimple Hopf algebras of dimension $pqr$.

\begin{defi}Two finite groups G and $G{'}$ are categorically Morita equivalent
 if the graded vector spaces fusion categories $Vec_G$ and $Vec_{{G{'}}}$ are categorically Morita equivalent.
\end{defi}
\begin{defi}[\cite{EG2}]Two finite groups G and $G{'}$ are isocategorical if their representation categories $Rep(G)$ and $Rep(G{'})$ are tensor equivalent, i.e. $k^G$ and $k^{{G{'}}}$ are monoidally Morita equivalent.
\end{defi}
\begin{rmk}There exist examples that two finite groups $G, G{'}$ are categorical Morita equivalent, but $Rep(G)\ncong Rep(G{'})$, therefore $G\ncong G{'}$ \cite{Nai1}.
The smallest order  of groups s.t $G_1$, $G_2$ are categorically Morita equivalent but not isocategorical is $16$, see \cite{Nai2}.
\end{rmk}
By Theorem \ref{monoidalM}, two finite groups of order square-free are isocategorical iff they are isomorphic.

Next, we will recall a criteria of  categorical Morita equivalence between finite groups. Here, we take the notations of \cite{Nai1}. For a subgroup $H\subseteq G$, let $G/H$ be the right coset, $\pi:G\rightarrow G/H$ is the natural projection, $x\mapsto Hx$.
And take ${\{u(x)\in G\mid x\in G/H}\}$ be a representative elements, where $u$ is the section $G/H\rightarrow G$, $p\circ u=id$. The set ${\{u(x)\mid x\in G/H}\}$ is a $G$-set with action $u(x)\lhd g=p(u(x)g)$, $\forall g\in G$. The elements $u(x)g$ and $u(x\lhd g)$ is in the same coset, they are differed by an element $\kappa_{x,g}\in H$, i.e. $u(x)g=\kappa_{x,g}u(x\lhd g)$. It is easy to see the identity $\kappa_{{x,g_1g_2}}=\kappa_{{x,g_1}}\kappa_{{x\lhd g_1,g_2}}$ is true, $\forall x\in G/H, g_1, g_2\in G$.

Moreover, $\kappa$ defines an element in $H^2(G/H,H)$, $\kappa(x_1,x_2)=\kappa_{{x_1,u(x_2)}}$, and corresponds to group extension $1\rightarrow H \rightarrow G\rightarrow G/H\rightarrow 1$ \cite{Nai1}.

In the  paper \cite{Nai1}, the sufficient and necessary condition for graded fusion categories $Vec^\omega_G$ and $Vec^{{\omega{'}}}_{{G{'}}}$ to be categorical Morita equivalent is given. Here, we only list the special case: $\omega=1$, $\omega{'}=1$.

\begin{pro}[\cite{Nai1}]\label{GroupMorita}Two finite groups G and $G{'}
$ are categorical Morita equivalent if and only
if the following conditions hold:
\begin{description}
\item[$(1)$] G contains a normal abelian subgroup H;
\item[$(2)$] there exists a G-invariant $\mu \in H^2(H,k^{*})$ s.t  $G{'}\cong\widehat{H} \rtimes_{\nu}K$ as finite groups; the group structure of $\widehat{H} \rtimes_{\nu}K$ is given by
\begin{align*}(\rho_1,x_1)(\rho_2,x_2)=(\nu(x_1,x_2)\rho^{{x_2}}_1\rho_2,x_1x_2),
\end{align*}
where  $\langle\rho^x,g\rangle:=\langle\rho,xgx^{-1}\rangle$, $\nu=\Phi(\mu)$, $\Phi: H^2(H,k^{*})^K\rightarrow H^2(K,\widehat{H})$, $K=G/H$, see \cite{Nai1} for details.
 \item[$(3)$]  The 3-cocycle $\varpi\in H^3(\widehat{H}\rtimes_\nu (G/H), k^*)$ defined below is trivial.
\begin{align*}\varpi((\rho_1,x_1),(\rho_2,x_2),(\rho_3,x_3)):=
(\widetilde{\nu}(x_1,x_2)(u(x_3)))(1)\rho_1(\kappa(x_2,x_3)),
\end{align*}
where $\widetilde{\nu}:K\times K\rightarrow C^1(G,C)$, $\widetilde{\nu}(y_1, y_2)
=\frac{\leftidx{^{y_2}}{\eta_{{y_1}}}\eta_{{y_2}}}{\eta_{{y_1 y_2}}}$, $u(x) \in G/H$ is a representative of element $x\in G$.
\end{description}
\end{pro}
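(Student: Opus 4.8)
The plan is to reduce the statement, via Theorem \ref{Drinfeldcenter}, to a braided equivalence of Drinfeld centers and then to read off the data of the dual pointed category from a Lagrangian subcategory. First I would record that for $Vec_G$ with trivial associator one has $Z(Vec_G)\cong Rep(D(G))$ as braided fusion categories, and likewise $Z(Vec_{G'})\cong Rep(D(G'))$. By Theorem \ref{Drinfeldcenter}, $Vec_G$ and $Vec_{G'}$ are categorically Morita equivalent if and only if $Rep(D(G))\cong Rep(D(G'))$ as braided fusion categories. Thus the whole problem becomes the enumeration of those groups $G'$ (carrying the trivial associator) for which $Rep(D(G'))$ is braided equivalent to the fixed nondegenerate braided category $\mathcal{B}:=Rep(D(G))$.

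Second, I would convert a braided equivalence into a matching of Lagrangian subcategories. Any presentation of a nondegenerate braided $\mathcal{B}$ as a center $Z(\mathcal{C})$ of a fusion category $\mathcal{C}$ is encoded by a Lagrangian (maximal isotropic Tannakian) subcategory $\mathcal{L}\subseteq\mathcal{B}$; when $\mathcal{C}=Vec^{\omega'}_{G'}$ is pointed, $\mathcal{L}\cong Rep(G')$ is the image of the forgetful functor. A braided equivalence $Rep(D(G))\cong Rep(D(G'))$ carries the canonical subcategory $Rep(G')\subseteq Z(Vec_{G'})$ onto a Lagrangian subcategory $\mathcal{L}\subseteq\mathcal{B}$, and conversely each Lagrangian subcategory $\mathcal{L}\subseteq\mathcal{B}$ with $\mathcal{L}\cong Rep(G')$ produces such an equivalence. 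Since this correspondence is a bijection, it yields both directions of the asserted \emph{iff}, and reduces the proposition to classifying Lagrangian subcategories of $Rep(D(G))$ together with the identification of the pointed category they determine.

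Third, I would invoke and unwind the classification of Lagrangian subcategories of $Rep(D(G))$: each corresponds to a pair $(H,\mu)$ with $H\trianglelefteq G$ a normal abelian subgroup and $\mu\in H^2(H,k^{*})$ a $G$-invariant class. This is precisely conditions $(1)$ and $(2)$. The pointed category attached to $(H,\mu)$ is $Vec^{\omega'}_{G'}$ with $G'\cong\widehat{H}\rtimes_\nu K$, $K=G/H$, where $\nu=\Phi(\mu)$ is the image of $\mu$ under the transgression $\Phi\colon H^2(H,k^{*})^K\to H^2(K,\widehat{H})$ attached to the extension $1\to H\to G\to K\to 1$; unwinding the dual action $\langle\rho^{x},g\rangle=\langle\rho,xgx^{-1}\rangle$ of $K$ on $\widehat{H}$ then gives exactly the multiplication rule $(\rho_1,x_1)(\rho_2,x_2)=(\nu(x_1,x_2)\rho_1^{x_2}\rho_2,x_1x_2)$ displayed in $(2)$.

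Finally, the decisive and most delicate step is to compute the associator $\omega'$ of this pointed category and to show that $Vec_{G'}$ carries the \emph{trivial} associator exactly when the explicit $3$-cocycle $\varpi$ vanishes. Here I would fix a set-theoretic section $u\colon K\to G$ with its cocycle $\kappa\in H^2(K,H)$ as in the statement, and trace how $\mu$ and $\kappa$ combine under the de-equivariantization of $\mathcal{B}=Rep(D(G))$ along the Tannakian subcategory $\mathcal{L}\cong Rep(G')$, which recovers $Vec^{\omega'}_{G'}$. Reading off $\omega'$ as a cocycle built from the cochain $\widetilde\nu$ and from $\kappa$ produces
\[
\varpi\big((\rho_1,x_1),(\rho_2,x_2),(\rho_3,x_3)\big)=\big(\widetilde\nu(x_1,x_2)(u(x_3))\big)(1)\,\rho_1(\kappa(x_2,x_3)),
\]
and condition $(3)$, the triviality of $[\varpi]\in H^3(G',k^{*})$, is then precisely the obstruction to trivializing $\omega'$, so that the dual pointed category is $Vec_{G'}$ with trivial associator rather than a genuinely twisted $Vec^{\omega'}_{G'}$. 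The main obstacle I anticipate is exactly this cohomological bookkeeping: producing $\varpi$ in closed form from the center data and verifying that its class governs the associator requires careful control of the section $u$, the cocycles $\kappa$ and $\nu$, and the compatibility relations among them.
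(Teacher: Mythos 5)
The paper does not prove this proposition at all: it is quoted verbatim from \cite{Nai1} as background, so there is no internal proof to compare your attempt against. Judged on its own terms, your strategy is coherent and is essentially the ``center'' route from the literature: pass from categorical Morita equivalence of $Vec_G$ and $Vec_{G'}$ to a braided equivalence $Rep(D(G))\cong Rep(D(G'))$ via Theorem \ref{Drinfeldcenter}, and then classify the ways $Rep(D(G))$ arises as the center of a pointed category through its Lagrangian subcategories. This is the approach of Naidu--Nikshych; the proof actually given in \cite{Nai1} is more pedestrian and does not pass through the center: it takes an indecomposable module category $\mathcal{M}(H,\mu)$ over $Vec_G$ (using Ostrik's classification, which you would also need for your ``only if'' direction, to know that \emph{every} module category with pointed dual is of this form), shows directly that the dual category $\mathcal{C}(G,1,H,\mu)$ is pointed precisely when $H$ is abelian normal and $\mu$ is $G$-invariant, identifies the group of invertible objects as $\widehat{H}\rtimes_{\nu}K$ with $\nu=\Phi(\mu)$, and computes the associator. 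Either route works; the center picture is conceptually cleaner and explains \emph{why} the answer is governed by pairs $(H,\mu)$, while the direct computation is what actually produces the formula for $\varpi$.

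The one genuine shortfall is that you defer exactly the step that carries all the content of condition $(3)$: the derivation of the explicit $3$-cocycle $\varpi$ from the section $u$, the extension cocycle $\kappa$, and the cochain $\widetilde{\nu}$, together with the verification that $Vec^{\omega'}_{G'}$ has trivializable associator iff $[\varpi]=0$ in $H^3(G',k^{*})$ (note it is the class, not the cochain, that must vanish). Without that computation the argument establishes only that categorical Morita equivalence is controlled by \emph{some} pair $(H,\mu)$ and \emph{some} obstruction class, not the stated formula; so as written this is a correct roadmap rather than a complete proof.
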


Next, we  need find all the isomorphism classes of metacyclic groups of order $pqr$. First of all, there is a classical formula for the number of isomorphic classes of square-free groups :
\begin{thm}[\cite{H}] The number f(n) of groups of order n, where n is square-free is given by following equality
\begin{gather*}
f(n)=\sum_{m\mid n}\prod_{{p_{1}}}\frac{p^{c(p_1)}_1-1}{p_1-1}
\end{gather*}
where $p_1$ runs over all prime divisors of $\frac{n}{m}$ and $c(p_1)$ is the number of prime divisors $q_1$ of m that satisfy $q_1\equiv 1 (mod ~p_1)$.
\end{thm}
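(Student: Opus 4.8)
The plan is to reduce the enumeration of groups of square-free order $n$ to a purely combinatorial count by exploiting the rigid structure of such groups. First I would recall from the metacyclic theory already cited (all Sylow subgroups cyclic $\Rightarrow$ metacyclic, \cite{Ro}) that a group $G$ of square-free order is a $Z$-group, and show that it decomposes canonically as a semidirect product $\mathbb{Z}_m \rtimes \mathbb{Z}_k$ with $mk = n$ and $\gcd(m,k)=1$. The canonical choice is $\mathbb{Z}_m = F(G)$, the Fitting subgroup: it is the product of the normal Sylow subgroups, hence abelian (so cyclic, as $m$ is square-free), normal and characteristic, while the complement $\mathbb{Z}_k = G/F(G)$ is again cyclic (its prime constituents, the ``acting'' primes, pairwise commute by the argument below). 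The essential structural lemma here is a ``no-chain'' statement: if a prime $q\mid m$ is acted on nontrivially by some Sylow, then $\mathbb{Z}_q$ acts trivially on every other Sylow. This follows from a short consistency computation on the defining relations: conjugating a relation $x_{p'}x_q x_{p'}^{-1}=x_q^{b}$ by an $x_p$ that acts nontrivially on $x_{p'}$ forces $b^{a-1}\equiv 1$, which is impossible. Consequently the prime divisors of $n$ split into ``acted-on'' and ``neutral'' primes (all placed in $m$) and ``acting'' primes (dividing $k$), and every prime of $\mathbb{Z}_k$ acts nontrivially on $\mathbb{Z}_m$, since a trivially acting complement prime would be central and belong to $F(G)$.

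Next I would translate isomorphism of groups into equivalence of actions. Since $F(G)$ is characteristic, any isomorphism $G_\phi \xrightarrow{\sim} G_{\phi'}$ restricts to an isomorphism $\mathbb{Z}_m\to\mathbb{Z}_m$ and descends to $\mathbb{Z}_k\to\mathbb{Z}_k$; conversely the standard theory of semidirect products gives $G_\phi\cong G_{\phi'}$ iff the homomorphisms $\phi,\phi'\colon \mathbb{Z}_k\to \mathrm{Aut}(\mathbb{Z}_m)$ lie in one orbit of $\mathrm{Aut}(\mathbb{Z}_m)\times\mathrm{Aut}(\mathbb{Z}_k)$. Because $\mathbb{Z}_m$ is cyclic, $\mathrm{Aut}(\mathbb{Z}_m)=(\mathbb{Z}/m)^{\times}$ is abelian, so its conjugation action is trivial and the only identifications come from reparametrizing the complement, i.e.\ from $\mathrm{Aut}(\mathbb{Z}_k)=(\mathbb{Z}/k)^{\times}$ acting by precomposition. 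Thus, for fixed $m$, the number of groups with $|F(G)|=m$ equals the number of $\mathrm{Aut}(\mathbb{Z}_k)$-orbits of homomorphisms $\phi$ each of whose prime components is nontrivial.

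The count then factors prime-by-prime. By the Chinese Remainder Theorem $\mathbb{Z}_k\cong\prod_{p_1\mid k}\mathbb{Z}_{p_1}$, $\mathrm{Aut}(\mathbb{Z}_m)\cong\prod_{q_1\mid m}\mathbb{Z}_{q_1-1}$, and $\mathrm{Aut}(\mathbb{Z}_k)\cong\prod_{p_1\mid k}\mathbb{Z}_{p_1-1}$ split compatibly, so a nontrivial-on-each-prime action up to $\mathrm{Aut}(\mathbb{Z}_k)$ amounts to an independent choice, for each $p_1\mid k$, of a nontrivial homomorphism $\mathbb{Z}_{p_1}\to\mathrm{Aut}(\mathbb{Z}_m)$ up to scaling by $(\mathbb{Z}/p_1)^{\times}$. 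The image of a generator is a $p_1$-torsion element of $\prod_{q_1}\mathbb{Z}_{q_1-1}$, and this $p_1$-torsion is nontrivial exactly in the factors with $q_1\equiv 1\pmod{p_1}$, each contributing a $\mathbb{Z}_{p_1}$; hence it is an $\mathbb{F}_{p_1}$-vector space of dimension $c(p_1)=\#\{q_1\mid m:\,q_1\equiv 1\pmod{p_1}\}$. Nonzero vectors modulo $\mathbb{F}_{p_1}^{\times}$ are the points of $\mathbb{P}^{c(p_1)-1}(\mathbb{F}_{p_1})$, numbering $\frac{p_1^{c(p_1)}-1}{p_1-1}$. Multiplying over $p_1\mid k=n/m$ yields the stated product, and summing over $m\mid n$ gives $f(n)$; a term in which some acting prime has $c(p_1)=0$ contributes the factor $\frac{p_1^{0}-1}{p_1-1}=0$, correctly recording that no such group exists.

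The main obstacle I expect is the first step: pinning down the canonical decomposition and, in particular, proving the no-chain consistency lemma so that $m=|F(G)|$ is a genuine isomorphism invariant and the complement acts with every prime nontrivial. Everything downstream—the orbit count and the projective-space bookkeeping—is routine once the rigid semidirect-product normal form and its uniqueness are in hand. I would also verify the boundary case $m=n$ (the cyclic group $\mathbb{Z}_n$, the unique abelian group, matching the empty product $1$) and confirm that distinct values of $m$ enumerate disjoint isomorphism classes, which is immediate once $|F(G)|$ is recognized as an invariant.
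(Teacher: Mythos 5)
The paper does not prove this statement at all: it is quoted verbatim from H\"older's 1893 paper \cite{H} and used as a black box, so there is no internal proof to compare against. Your reconstruction is essentially the classical argument and I believe it is correct: the decomposition $G\cong F(G)\rtimes \mathbb{Z}_k$ with $m=|F(G)|$ an isomorphism invariant, the reduction to $\mathrm{Aut}(\mathbb{Z}_k)$-orbits of componentwise-nontrivial homomorphisms $\mathbb{Z}_k\to\mathrm{Aut}(\mathbb{Z}_m)$ (conjugation being trivial since $\mathrm{Aut}(\mathbb{Z}_m)$ is abelian), and the projective-space count $\frac{p_1^{c(p_1)}-1}{p_1-1}$ all match the formula, including the convention that a term with $c(p_1)=0$ vanishes; I checked the resulting counts against the paper's Table 1 for $n=pqr$ and they agree. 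A few points deserve tightening. First, the cyclicity of the complement is most cleanly obtained not from the ``no-chain'' lemma but from solvability of $G$ (which the metacyclic structure theorem already gives): then $C_G(F(G))=F(G)$, so $G/F(G)$ embeds in the abelian group $\mathrm{Aut}(\mathbb{Z}_m)\cong(\mathbb{Z}/m)^{\times}$ and, being of square-free order, is cyclic; your no-chain computation is correct but presupposes a normal Sylow tower, and its conclusion should read ``forces $b\equiv 1$, contradicting nontriviality'' rather than ``is impossible.'' Second, in the ``only if'' direction of the isomorphism criterion you should invoke the conjugacy part of Schur--Zassenhaus to arrange that an abstract isomorphism carries the chosen complement to the chosen complement before reading off the pair $(\alpha,\beta)$. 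Neither issue is a gap in substance, only in the level of detail of the sketch.
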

And in the reference \cite{A}, Alonso gave a complete description of all the types of metacyclic groups of order $pqr$, we list them below:
\begin{align*}
G_1:=\langle a,b,c\mid a^p=b^q=c^r=1, ab=ba,ac=ca,bc=cb\rangle,
\end{align*}
\begin{align*}
G_2:=\langle a,b,c\mid a^p=b^q=c^r=1,ab=ba,ac=ca, cbc^{-1}=b^{K(q,r)}\rangle,
\end{align*}
\begin{align*}
G_3:=\langle a,b,c\mid a^p=b^q=c^r=1,ab=ba,cac^{-1}=a^{K(p,r)},bc=cb\rangle,
\end{align*}
\begin{align*}
G_4:=\langle a,b,c\mid a^p=b^q=c^r=1,ab=ba,cac^{-1}=a^{K(p,r)},cbc^{-1}=b^{k(q,r)}\rangle,
\end{align*}
where $k(q,r)=K(q,r)^n, n=1,\cdots r-1$, $K(q,r)^r\equiv1 (mod~q)$.
\begin{align*}
G_5:=\langle a,b,c\mid a^p=b^q=c^r=1,bab^{-1}=a^{K(p,q)},ac=ca,bc=cb\rangle,
\end{align*}
\begin{align*}
G_6:=\langle a,b,c\mid a^p=b^q=c^r=1, bab^{-1}=a^{K(p,q)}, cac^{-1}=a^{K(p,r)},bc=cb\rangle.
\end{align*}

In summary, the number of groups of order $pqr$ is given in [Table \ref{table} ].

\begin{table}
\centering
\begin{tabular}{|c|c|c|c|c|}
  \hline
   case &$q\mid (p-1)$ & $r\mid (p-1)$ & $r\mid (q-1)$ & Number \\
 \hline (1)&No & No & No & 1 \\
 \hline (2)&No & No & Yes & 2 \\
  \hline (3)& No & Yes & No & 2 \\
 \hline (4)&No & Yes & Yes & $r$+2 \\
 \hline (5)&Yes & No & No & 2 \\
 \hline (6)&Yes & No & Yes & 3 \\
 \hline (7)&Yes & Yes & No & 4 \\
 \hline (8)&Yes & Yes & Yes & $r$+4 \\
  \hline
\end{tabular}
\caption{Number~ of ~groups~ of~ order~ $pqr (r<q<p)$}\label{table}
\end{table}

For later use, we denote
\begin{align*}
\widetilde{G_4}:=\langle a,b,c\mid a^p=b^q=c^r=1,ab=ba,cac^{-1}=a^{\widetilde{K}(p,r)},cbc^{-1}=b^{\widetilde{k}(q,r)}\rangle,
\end{align*}
where $\widetilde{k}(q,r)=\widetilde{K}(q,r)^n, n=1,\cdots r-1$, $\widetilde{K}(q,r)^r\equiv1 (mod~q)$, and $\widetilde{K}(p,r)$ $K(q,r)\equiv1 (mod~q)$, and call it is ``dual" to $G_4$.

\begin{lem}\label{abelian}In any case, Hopf algebra $k[\mathbb{Z}_{pqr}]=k[\mathbb{Z}_{p}\times\mathbb{Z}_{q}\times \mathbb{Z}_{r}]$ is not categorically Morita equivalent to any other Hopf algebras.
\end{lem}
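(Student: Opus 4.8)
The plan is to detect categorical Morita equivalence through the Drinfeld center, using Theorem \ref{Drinfeldcenter} (equivalently Corollary \ref{Drinfelddouble}), and to show that the center attached to $k[\mathbb{Z}_{pqr}]$ is so special that only $k[\mathbb{Z}_{pqr}]$ itself can share it. First I would identify $Rep(k[\mathbb{Z}_{pqr}])=Rep(\mathbb{Z}_{pqr})$ with the pointed category $Vec_{\mathbb{Z}_{pqr}}$, using $\widehat{\mathbb{Z}_{pqr}}\cong\mathbb{Z}_{pqr}$, and compute its Drinfeld center $Z(Vec_{\mathbb{Z}_{pqr}})\cong Rep(D(\mathbb{Z}_{pqr}))$. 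Since $\mathbb{Z}_{pqr}$ is abelian and the associator is trivial, the simple objects of this center are indexed by pairs $(g,\chi)\in\mathbb{Z}_{pqr}\times\widehat{\mathbb{Z}_{pqr}}$ and all have $FPdim=1$; thus $Z(Vec_{\mathbb{Z}_{pqr}})$ is a pointed (in fact nondegenerate) braided fusion category, with group of invertibles $\mathbb{Z}_{pqr}\times\mathbb{Z}_{pqr}$.

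Now suppose $L$ is a semisimple Hopf algebra of dimension $pqr$ that is categorically Morita equivalent to $k[\mathbb{Z}_{pqr}]$. By Theorem \ref{Drinfeldcenter} the braided categories $Z(Rep(L))$ and $Z(Vec_{\mathbb{Z}_{pqr}})$ are equivalent, so $Z(Rep(L))$ is pointed. The key step is to upgrade this to pointedness of $Rep(L)$ itself: the forgetful functor $F:Z(Rep(L))\to Rep(L)$ is a dominant tensor functor and hence preserves Frobenius--Perron dimension (see \cite{EGNO}), so the image of each (invertible) simple object of the center is an object of $FPdim=1$, i.e. an invertible object of $Rep(L)$; since $F$ is dominant, every simple object of $Rep(L)$ is a constituent of such an image and is therefore invertible. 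Hence $Rep(L)$ is pointed.

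Because $L$ is an honest Hopf algebra, $Rep(L)$ has trivial associator, so pointedness forces every irreducible $L$-module to be one-dimensional; equivalently $L$ is commutative, and being semisimple and commutative it is $L\cong k^{G'}$ for a group $G'$ of order $pqr$, with $Rep(L)=Vec_{G'}$. It then remains to pin down $G'$: the equivalence $Z(Vec_{G'})\cong Z(Vec_{\mathbb{Z}_{pqr}})$ forces $Z(Vec_{G'})$ to be pointed, and the center of $Vec_{G'}$ is pointed iff $G'$ is abelian (its simples are indexed by pairs consisting of a conjugacy class and an irreducible representation of the corresponding centralizer, and these all have dimension $1$ exactly when $G'$ is abelian). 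Alternatively one may apply Proposition \ref{GroupMorita} directly to the groups $G'$ and $\mathbb{Z}_{pqr}$: any normal abelian subgroup $H\le\mathbb{Z}_{pqr}$ is cyclic of square-free order, so $H^2(H,k^{*})=0$ by Corollary \ref{trivialSchur}, whence $\mu$ and $\nu$ are trivial and the relevant conjugation action is trivial (as $\mathbb{Z}_{pqr}$ is abelian), giving $G'\cong\widehat{H}\times(\mathbb{Z}_{pqr}/H)\cong\mathbb{Z}_{pqr}$. Since $pqr$ with $p,q,r$ distinct admits a unique abelian group, $G'\cong\mathbb{Z}_{pqr}$, and therefore $L\cong k^{\mathbb{Z}_{pqr}}\cong k[\mathbb{Z}_{pqr}]$.

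I expect the main obstacle to be the implication ``$Z(\mathcal{C})$ pointed $\Rightarrow \mathcal{C}$ pointed'', which is exactly what reduces an a priori arbitrary Hopf algebra $L$ to the commutative (dual group algebra) case; once $Rep(L)$ is known to be pointed, the remaining argument is the bookkeeping above, controlled by the triviality of the Schur multipliers of cyclic square-free groups (Corollary \ref{trivialSchur}) and the uniqueness of the abelian group of order $pqr$. In particular this simultaneously rules out the non-trivial Hopf algebras $\mathcal{A}_p(q;r)$, $\mathcal{A}_p(r;q)$ and the group algebras $k[G]$ of non-abelian $G$, since in each of these cases the associated center is the non-pointed center $Z(Vec_{G})$ of a non-abelian group $G$.
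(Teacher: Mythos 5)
Your proof is correct, and it is in essence the same argument as the paper's, translated from Hopf algebras to categories: the paper observes that $D(k[\mathbb{Z}_{pqr}])$ is commutative, that a Drinfeld twist does not alter the multiplication, and that $D(H)$ is commutative iff $H$ and $H^{*}$ are commutative (citing Radford), which is exactly your chain ``$Z(Vec_{\mathbb{Z}_{pqr}})$ pointed $\Rightarrow$ $Z(Rep(L))$ pointed $\Rightarrow$ $Rep(L)$ and $Z(Vec_{G'})$ pointed $\Rightarrow$ $L$ and $L^{*}$ commutative'' followed by uniqueness of the abelian group of order $pqr$. The only substantive difference is that you derive the implication ``center pointed $\Rightarrow$ category pointed'' from scratch via surjectivity and FP-dimension preservation of the forgetful functor, where the paper gets the corresponding fact for free from the cited Hopf-algebraic equivalence.
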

\begin{proof}
Note  there only one abelian group $\mathbb{Z}_{pqr}$, the Hopf algebra $k[\mathbb{Z}_{pqr}] =k[\mathbb{Z}_{p}\times\mathbb{Z}_{q}\times \mathbb{Z}_{r}]\cong k^{\mathbb{Z}_{p}\times\mathbb{Z}_{q}\times \mathbb{Z}_{r}}$.
For any semisimple Hopf algebras $H$ of dimension $pqr$, if $H$ is categorically Morita equivalent to $k[\mathbb{Z}_{pqr}]$ then $D(H)\cong D(k[\mathbb{Z}_{pqr}])^J$ by corollary \ref{Drinfelddouble}. However, the Drinfeld double of $k[\mathbb{Z}_{p}\times\mathbb{Z}_{q}\times \mathbb{Z}_{r}]$ commutative, so $D(H)\cong D(k[\mathbb{Z}_{pqr}])$. We know that $D(H)$ is commutative iff $H, H^*$ are commutative \cite{Ra}, hence $k[\mathbb{Z}_{pqr}] $ can not be  categorically Morita equivalent to any other Hopf algebras, as $\mathbb{Z}_{pqr}$ is the only abelian group of order $pqr$.
\end{proof}
Before we handle other cases, let us determine the morphisms $\nu$, $\widetilde{\nu}$, $\mu$, $\varpi$ in proposition \ref{GroupMorita} . Note that for groups $H$ of order $st$ or $s$ ($s, t$ are distinct primes), we have $H^2(H,k^{*})=0$ by corollary \ref{trivialSchur}, for $\mu \in H^2(H,k^{*})=0$, therefore we have $\nu=1$, $\widetilde{\nu}=1$.

Since $\kappa$ defines an element in $H^2(K,H)$,  and corresponds to extension $1\rightarrow H \rightarrow G\rightarrow G/H\rightarrow 1$. Therefore, if the exact sequence $1\rightarrow H \rightarrow G\rightarrow G/H\rightarrow 1$ is splitting, that is $G\cong H\rtimes (G/H)$, then we assume the morphism $\kappa=1$.

If all above cases are true, then we have $\varpi$ to be trivial $3$-cocycle, and  a group isomorphism $\widehat{H}\rtimes_{\nu} (G/H)\cong \widehat{H}\rtimes (G/H)$. However, $\widehat{H}\rtimes (G/H)$ might not isomorphic to $G$, as the action of $G/H$ on $\widehat{H}$ is different from the action $\rtimes : H\times G/H\rightarrow H$.

\begin{lem}\label{semiproductandproduct} In any cases
only Hopf algebras $k[(\mathbb{Z}_{i}\rtimes\mathbb{Z}_{j})\times \mathbb{Z}_{k}]$ and $k[(\mathbb{Z}_{l}\rtimes\mathbb{Z}_{m})\times \mathbb{Z}_{n}]$ are in same categorically Morita equivalent classes iff $i=l$, $j=m$, $k=n$, where ${\{i,j,k}\}={\{p,q,r}\}={\{l,m,n}\}$.
\end{lem}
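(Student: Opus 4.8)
The plan is to reduce the statement to the group-level categorical Morita equivalence and then run Naidu's criterion (Proposition \ref{GroupMorita}) through the list of normal abelian subgroups. First I would observe that for $G$ of squarefree order, $k[G]$ and $k[G']$ are categorically Morita equivalent precisely when $Vec_G$ and $Vec_{G'}$ are: by Corollary \ref{Drinfelddouble} this amounts to $D(G)\cong D(G')^J$, and $Rep(D(G))=Z(Vec_G)$, so the question is exactly the group-theoretical one appearing in the Definition preceding Proposition \ref{GroupMorita}. Hence it suffices to determine, for $G=(\mathbb{Z}_i\rtimes\mathbb{Z}_j)\times\mathbb{Z}_k$ with $\{i,j,k\}=\{p,q,r\}$ and nontrivial action, all groups $G'$ categorically Morita equivalent to $G$.

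Next I would invoke the simplifications recorded just before this lemma. Every abelian normal subgroup $H$ of $G$ has squarefree order, hence is cyclic, so $H^2(H,k^{*})=0$ by Corollary \ref{trivialSchur}; therefore $\mu=1$, $\nu=1$ and $\tilde\nu=1$. Moreover each extension $1\to H\to G\to G/H\to 1$ will be shown to split, giving $\kappa=1$ and thus a trivial $\varpi$. Under these reductions Proposition \ref{GroupMorita} collapses to the assertion that the groups categorically Morita equivalent to $G$ are exactly the $\widehat{H}\rtimes (G/H)$, where $G/H$ acts on $\widehat{H}$ by the contragredient of the conjugation action and $H$ ranges over the normal abelian subgroups of $G$.

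The core computation is then an enumeration. Since $\gcd(ij,k)=1$, every subgroup of $G$ factors as a product of a subgroup of $\mathbb{Z}_i\rtimes\mathbb{Z}_j$ with a subgroup of $\mathbb{Z}_k$, and the normal subgroups of $\mathbb{Z}_i\rtimes\mathbb{Z}_j$ are $1,\ \mathbb{Z}_i$ and the whole group; hence the normal abelian subgroups of $G$ are precisely $1$, $\mathbb{Z}_k$, $\mathbb{Z}_i$ and $\mathbb{Z}_i\times\mathbb{Z}_k$, all cyclic and all admitting a complement. For each I would compute $\widehat{H}\rtimes(G/H)$: for $H=\mathbb{Z}_i$ the quotient $\mathbb{Z}_j\times\mathbb{Z}_k$ acts on $\widehat{\mathbb{Z}_i}$ only through $\mathbb{Z}_j$; for $H=\mathbb{Z}_i\times\mathbb{Z}_k$ the quotient $\mathbb{Z}_j$ acts only on the $\widehat{\mathbb{Z}_i}$ factor; for the central $H=\mathbb{Z}_k$ the quotient acts trivially; and $H=1$ returns $G$. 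In every case the contragredient action is again a nontrivial action of $\mathbb{Z}_j$ on a cyclic group of order $i$, and since there is a unique nonabelian group of order $ij$, one gets $\widehat{H}\rtimes(G/H)\cong (\mathbb{Z}_i\rtimes\mathbb{Z}_j)\times\mathbb{Z}_k=G$. Thus the only group categorically Morita equivalent to $G$ is $G$ itself.

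Finally I would assemble the biconditional. If $i=l,\ j=m,\ k=n$ then $G\cong G'$, so $k[G]\cong k[G']$ and they are trivially in the same class. Conversely, if $k[G]$ and $k[G']$ are categorically Morita equivalent then $G'\cong G$ by the previous paragraph, and since the triple $(i,j,k)$ recording the acted-on prime, the acting prime and the central prime is a complete isomorphism invariant of these groups (all nontrivial actions yielding the same group up to isomorphism), this forces $i=l,\ j=m,\ k=n$. The step I expect to be the real work is the enumeration paragraph: making sure no normal abelian subgroup is overlooked and, above all, checking that passing to the contragredient (dual) action never changes the isomorphism type of the resulting semidirect product. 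This is exactly the subtlety flagged in the discussion before the lemma, and it is what rules out the a priori possibility that some $\widehat{H}\rtimes(G/H)$ is a genuinely different group of order $pqr$.
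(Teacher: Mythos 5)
Your proposal is correct and follows essentially the same route as the paper: reduce to Naidu's criterion with $\mu,\nu,\widetilde{\nu},\kappa$ all trivial, enumerate the normal abelian subgroups of $(\mathbb{Z}_i\rtimes\mathbb{Z}_j)\times\mathbb{Z}_k$ as $1$, $\mathbb{Z}_i$, $\mathbb{Z}_k$, $\mathbb{Z}_i\times\mathbb{Z}_k$, and check that every $\widehat{H}\rtimes(G/H)$ is isomorphic to $G$ because there is only one nonabelian group of this shape up to isomorphism. Your write-up is in fact somewhat more careful than the paper's (in particular about the splitting of each extension and the final non-isomorphism of the groups for distinct triples $(i,j,k)$), but the underlying argument is the same.
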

\begin{proof}We can assume the group is $(\mathbb{Z}_{p}\rtimes\mathbb{Z}_{q})\times \mathbb{Z}_{r}$. Then any non-trivial abelian normal subgroup is of prime order or the product of two primes.

For the first case, only $\mathbb{Z}_{p}, \mathbb{Z}_{r}$ are normal subgroups. Because $\mathbb{Z}_{p}\rtimes\mathbb{Z}_{q}$ is the only non-abelian group of order $pq$, and $\mathbb{Z}_r\subseteq Z((\mathbb{Z}_{p}\rtimes\mathbb{Z}_{q})\times \mathbb{Z}_{r})$, therefore using the Sylow's theorem \cite{Ro}, we know  there are $p$ Sylow $q$-subgroups, and the conjugate action acts transitively on the set of Sylow $q$-subgroups, so any subgroup of order $q$ can not be normal.

For the second case, any subgroup of order $pq, qr$ , they can not be abelian normal subgroup. Since $\mathbb{Z}_p, \mathbb{Z}_r$ are the unique subgroups of order $p, r$, any subgroup $N$ of order $pq$ or $qr$, it is normal is equivalent to the normality of subgroup of order $q$, this is impossible, hence $\mathbb{Z}_{p}\times\mathbb{Z}_{r}$  is the only normal subgroup of order $pr$. In any case, the group extension $H\hookrightarrow G\twoheadrightarrow G/H$ is splitting, we have  $\widehat{H}\rtimes (G/H)$ are isomorphic to itself $G$, as for group of order $pqr$, up to isomorphism there is only one non-abelian group of order $pqr$ with center isomorphic to $\mathbb{Z}_r$.
\end{proof}
\begin{pro}\label{onesemiproduct}In cases $(1),(2),(3),(5),(6)$ of [Table \ref{table}], the number of  categorically Morita equivalent classes of semisimple Hopf algebras is equal to the number of isomorphic groups of order $pqr$.
\end{pro}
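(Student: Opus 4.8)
The plan is to reduce the statement to a counting of categorical Morita classes of the underlying groups and then to invoke Lemmas \ref{abelian} and \ref{semiproductandproduct}. First I would note that in each of the cases $(1),(2),(3),(5),(6)$ one has $rq\nmid(p-1)$: in cases $(1),(2),(3)$ the relation $q\mid(p-1)$ fails, and in cases $(5),(6)$ the relation $r\mid(p-1)$ fails, so since $q$ and $r$ are coprime the product $rq$ cannot divide $p-1$ in any of them. By Proposition \ref{DualHopfalgebras}, the hypothesis $rq\nmid(p-1)$ forces every semisimple Hopf algebra of dimension $pqr$ to be trivial, that is, isomorphic to a group algebra $k[G]$ or a dual group algebra $k^{G}$ for some group $G$ of order $pqr$.

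Next I would reduce categorical Morita equivalence of these Hopf algebras to categorical Morita equivalence of groups. For any finite group $G$ the fusion categories $Rep(k[G])=Rep(G)$ and $Rep(k^{G})=Vec_{G}$ have Drinfeld centers both braided equivalent to $Rep(D(G))$, so by Corollary \ref{Drinfelddouble} the algebras $k[G]$ and $k^{G}$ always lie in the same categorical Morita class. More generally, $k[G]$ and $k[G{'}]$ are categorically Morita equivalent, in the sense of Definition \ref{catgoricalHopfalgebra}, exactly when $Rep(D(G))\cong Rep(D(G{'}))$ as braided fusion categories, i.e. exactly when the groups $G$ and $G{'}$ are categorically Morita equivalent in the sense of Proposition \ref{GroupMorita}. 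Hence the number of categorical Morita classes of semisimple Hopf algebras of dimension $pqr$ equals the number of categorical Morita classes of groups of order $pqr$, and it suffices to prove that in these five cases distinct isomorphism classes of groups yield distinct categorical Morita classes.

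The key step is to pin down the shape of every group of order $pqr$ occurring here. Using the classification $G_{1},\dots,G_{6}$ and [Table \ref{table}], I would observe that the ``doubly twisted'' types are excluded under the present hypotheses: the $G_{4}$-type requires both $r\mid(p-1)$ and $r\mid(q-1)$, and the $G_{6}$-type requires both $q\mid(p-1)$ and $r\mid(p-1)$, so they occur only in cases $(4),(7),(8)$. Consequently in cases $(1),(2),(3),(5),(6)$ every group is either the cyclic group $\mathbb{Z}_{pqr}$ (the type $G_{1}$) or one of $G_{2},G_{3},G_{5}$, each of which has the form $(\mathbb{Z}_{i}\rtimes\mathbb{Z}_{j})\times\mathbb{Z}_{k}$ with $\{i,j,k\}=\{p,q,r\}$ and a non-trivial action. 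Now Lemma \ref{abelian} shows that $k[\mathbb{Z}_{pqr}]$ is alone in its categorical Morita class, while Lemma \ref{semiproductandproduct} shows that two groups of the form $(\mathbb{Z}_{i}\rtimes\mathbb{Z}_{j})\times\mathbb{Z}_{k}$ are categorically Morita equivalent only when the triples $(i,j,k)$ agree, i.e. only when the groups are isomorphic. Therefore non-isomorphic groups give distinct categorical Morita classes, and the number of classes equals the number of groups of order $pqr$ recorded in [Table \ref{table}].

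The main obstacle I anticipate is the bookkeeping of the third paragraph: one must verify, via the metacyclic classification, that under the stated divisibility conditions no group of order $pqr$ escapes the two admissible shapes (abelian or semidirect-times-cyclic), since this is precisely what makes Lemmas \ref{abelian} and \ref{semiproductandproduct} jointly exhaustive. Once the shape of every group is fixed, the conclusion is immediate; the delicate point is ensuring that the exceptional ``nested'' groups really are confined to the remaining cases $(4),(7),(8)$.
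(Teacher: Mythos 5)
Your proposal is correct and follows essentially the same route as the paper: observe that $rq\nmid(p-1)$ forces all Hopf algebras in these cases to be trivial, note that every group occurring is either $\mathbb{Z}_{pqr}$ or of the form $(\mathbb{Z}_i\rtimes\mathbb{Z}_j)\times\mathbb{Z}_k$, and then conclude via Lemmas \ref{abelian} and \ref{semiproductandproduct}. You merely make explicit several steps the paper leaves implicit (the divisibility check, the exclusion of the $G_4$- and $G_6$-types, and the fact that $k[G]$ and $k^G$ share a Drinfeld center), which is a faithful elaboration rather than a different argument.
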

\begin{proof}The proof is easy: In these cases,  all semsimple Hopf algebras of dimension $pqr$ are trivial Hopf algebras, they all have the form like as:  $k[\mathbb{Z}_p\times\mathbb{Z}_q\times\mathbb{Z}_r]$,  $k[\mathbb{Z}_k\times(\mathbb{Z}_l\rtimes\mathbb{Z}_m)]$, where ${\{k,l,m}\}={\{p,q,r}\}$. By lemma \ref{abelian} , we know $k[\mathbb{Z}_p\times\mathbb{Z}_q\times\mathbb{Z}_r]$ and $k[\mathbb{Z}_k\times(\mathbb{Z}_l\rtimes\mathbb{Z}_m)]$ are not categorically Morita equivalent. Therefore lemma \ref{semiproductandproduct} implies the proof.
\end{proof}
\begin{pro}\label{case4}For case $(4)$, there are $4$ $($if $r=2)$ or $3+\frac{r-1}{2}$ $($if $r>2)$ categorically Morita equivalent classes of semisimple Hopf algebras of dimension $pqr$.
\end{pro}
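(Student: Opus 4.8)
The plan is to reduce everything to a group-theoretic count and then run the criterion of Proposition \ref{GroupMorita} on the groups appearing in case $(4)$. First I would observe that case $(4)$ forces $q\nmid(p-1)$, hence $rq\nmid(p-1)$, so by Proposition \ref{DualHopfalgebras} there are no non-trivial semisimple Hopf algebras of dimension $pqr$: every such Hopf algebra is some $k[G]$ or $k^{G}$ with $|G|=pqr$. Since $Rep(k[G])=Rep(G)$ and $Rep(k^{G})=Vec_{G}$ have the same Drinfeld center $Z(Vec_{G})\cong D(G)\text{-mod}$, Corollary \ref{Drinfelddouble} shows $k[G]$ and $k^{G}$ are always categorically Morita equivalent, and moreover $k[G]$ is categorically Morita equivalent to $k[G{'}]$ iff $G$ and $G{'}$ are categorically Morita equivalent as groups. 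Thus the number of categorically Morita equivalent classes equals the number of categorical Morita classes of the $r+2$ groups of case $(4)$, namely $G_1=\mathbb{Z}_{pqr}$, $G_2=\mathbb{Z}_p\times(\mathbb{Z}_q\rtimes\mathbb{Z}_r)$, $G_3=(\mathbb{Z}_p\rtimes\mathbb{Z}_r)\times\mathbb{Z}_q$, and the $r-1$ groups $G_4^{(n)}$, $n=1,\dots,r-1$, where $c$ acts on $a$ by $K(p,r)$ and on $b$ by $K(q,r)^{n}$.

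Next I would dispose of the easy classes and then compute the hard ones. By Lemma \ref{abelian}, $G_1=\mathbb{Z}_{pqr}$ is alone in its class. The heart of the argument is to compute, via Proposition \ref{GroupMorita}, the full categorical Morita class of each $G_4^{(n)}$. Since every subgroup here has square-free order, all Schur multipliers vanish (Corollary \ref{trivialSchur}), so $\mu=0$, $\nu=\widetilde{\nu}=1$; and each relevant extension $1\to H\to G_4^{(n)}\to G_4^{(n)}/H\to 1$ is split, so $\kappa=1$ and the obstruction $3$-cocycle $\varpi$ is trivial. Hence $G{'}\cong\widehat{H}\rtimes(G_4^{(n)}/H)$ with the contragredient action, and I only need to run over the normal abelian subgroups of $G_4^{(n)}$, which are exactly $\mathbb{Z}_p$, $\mathbb{Z}_q$ and $\mathbb{Z}_{pq}=\langle a,b\rangle$ (no subgroup of order $r$ is normal because the $\mathbb{Z}_r$-action on $\mathbb{Z}_p$ is non-trivial, so there is no normal abelian subgroup of order $pr$ or $qr$). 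Dualizing $H=\mathbb{Z}_{pq}$ inverts both exponents and, after relabelling $c\mapsto c^{-1}$, returns $G_4^{(n)}$; dualizing $H=\mathbb{Z}_q$ (equivalently $H=\mathbb{Z}_p$) inverts exactly one of the two actions, producing $(\mathbb{Z}_p\times\mathbb{Z}_q)\rtimes\mathbb{Z}_r$ with $c$ acting by $K(p,r)$ on $a$ and by $K(q,r)^{-n}$ on $b$, i.e. the ``dual'' group $\widetilde{G_4}\cong G_4^{(r-n)}$. Therefore the categorical Morita class of $G_4^{(n)}$ is exactly $\{G_4^{(n)},G_4^{(r-n)}\}$.

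Then I would finish the count. Because each $G_4^{(n)}$-class is fully determined as $\{G_4^{(n)},G_4^{(r-n)}\}$, and none of these groups is isomorphic to $G_2$ or $G_3$ (the latter have non-trivial centre $\mathbb{Z}_p$, resp. $\mathbb{Z}_q$, whereas every $G_4^{(n)}$ has trivial centre), neither $G_2$ nor $G_3$ can lie in a $G_4$-class; Lemma \ref{semiproductandproduct} then separates $G_2$ from $G_3$ and shows each is a singleton class. Finally, the involution $n\mapsto r-n$ on $\{1,\dots,r-1\}$ has a fixed point iff $2n\equiv 0\pmod r$, which for $r>2$ (odd) is impossible, giving $\tfrac{r-1}{2}$ two-element classes, while for $r=2$ it fixes the single group $G_4^{(1)}$, giving one class. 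Adding the three singletons $G_1,G_2,G_3$ yields $3+\tfrac{r-1}{2}$ classes when $r>2$ and $4$ classes when $r=2$, as claimed.

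The main obstacle I expect is the bookkeeping in the middle step: correctly identifying the contragredient $\mathbb{Z}_r$-action on $\widehat{H}$ (including the inversion coming from the conjugation convention $\langle\rho^{x},g\rangle=\langle\rho,xgx^{-1}\rangle$ of Proposition \ref{GroupMorita}), and then choosing the right relabelling $c\mapsto c^{k}$ to normalize the action on $\mathbb{Z}_p$ so that the resulting semidirect product is recognized as a specific $G_4^{(n{'})}$ rather than an a priori different presentation. Getting the precise value $n{'}=r-n$ (and not, say, $n{'}=n$) is exactly the computation that produces the pairing, and hence makes the two-case formula come out correctly.
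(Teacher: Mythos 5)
Your proposal is correct and follows essentially the same route as the paper: reduce to the group-theoretic count via Proposition \ref{GroupMorita}, note that all Schur multipliers and extension classes vanish so that only $\widehat{H}\rtimes(G/H)$ for normal abelian $H$ matters, and then pair off the $G_4^{(n)}$ via the involution $n\mapsto r-n$. Your bookkeeping of which subgroup produces which group (namely $H=\mathbb{Z}_{pq}$ returning $G_4^{(n)}$ and $H=\mathbb{Z}_p$ or $\mathbb{Z}_q$ producing $G_4^{(r-n)}$) is in fact more carefully justified than the paper's one-line attribution, and the final count agrees.
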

\begin{proof}In this case, we have $r\mid(q-1)$, $r\mid(p-1)$ and $q\nmid(p-1)$. Semisimple Hopf algebras like $k[\mathbb{Z}_{pqr}]$, $k[(\mathbb{Z}_{p}\rtimes \mathbb{Z}_{r})\times\mathbb{Z}_{q}]$, $k[(\mathbb{Z}_{q}\rtimes \mathbb{Z}_{r})\times\mathbb{Z}_{p}]$ belong different categorically Morita equivalent classes, and by proposition \ref{semiproductandproduct}, we also know that they are not categorically Morita equivalent to other Hopf algebras. So we just need to consider groups of type $G_4$, all the non-trivial abelian normal groups are $\mathbb{Z}_p$, $\mathbb{Z}_q$, $\mathbb{Z}_p\times \mathbb{Z}_q$; if we take $H=\mathbb{Z}_p$, then $\widehat{H}\rtimes(G/H)\cong G$, as  $\widehat{H}\rtimes\mathbb{Z}_r\cong\mathbb{Z}_p\rtimes\mathbb{Z}_r$ is unique up to isomorphism, and $\widehat{H}\rtimes\mathbb{Z}_q\cong\widehat{H}\times\mathbb{Z}_q$. If $H=\mathbb{Z}_q$ or $\mathbb{Z}_p\times \mathbb{Z}_q$, then $\widehat{H}\rtimes (G/H)$ is isomorphic to $\widetilde{G_4}$. Therefore, by the ``duality" of the structures of group $G_4$ and $\widetilde{G_4}$, the proof is finished.
\end{proof}

Then we solve the case $(7)$. We first determine finite group of order $pqr$ satisfying  $qr\mid (p-1)$ and $r\nmid(q-1)$. As listed in [Table \ref{table} ], there are four isomorphic classes of groups, and a direct computation shows the groups listed below are not isomorphic to each other : $\mathbb{Z}_{pqr}\cong\mathbb{Z}_{p}\times\mathbb{Z}_{q}\times \mathbb{Z}_{r}$, $(\mathbb{Z}_{p}\rtimes \mathbb{Z}_{r})\times\mathbb{Z}_{q}$, $(\mathbb{Z}_{p}\rtimes \mathbb{Z}_{q})\times\mathbb{Z}_{r}$, $\mathbb{Z}_{p}\rtimes (\mathbb{Z}_{q}\times\mathbb{Z}_{r})\cong \mathbb{Z}_{p}\rtimes \mathbb{Z}_{qr}$. Here we require all the actions $\rtimes$ are not trivial.

\begin{thm}\label{categoricalM1}There four categorically Morita inequivalent classes in case $(7)$: $k[\mathbb{Z}_{pqr}]$,  $k[(\mathbb{Z}_{p}\rtimes\mathbb{Z}_{r})\times \mathbb{Z}_{q}]$, $k[(\mathbb{Z}_{p}\rtimes\mathbb{Z}_{q})\times \mathbb{Z}_{r}]$ and $\mathcal{A}_p(q;r)$.
\end{thm}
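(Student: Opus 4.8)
The plan is to list every semisimple Hopf algebra of dimension $pqr$ occurring in case $(7)$, sort them into the four asserted classes, and then prove these classes are pairwise categorically Morita inequivalent. By the classification recalled in Section \ref{Semsimplepqr} together with Propositions \ref{DualHopfalgebras} and \ref{fusioncategory}, the complete list is: the group algebras $k[G]$ and their duals $k^G$ for the four groups $G\in\{\mathbb{Z}_{pqr},\ (\mathbb{Z}_p\rtimes\mathbb{Z}_r)\times\mathbb{Z}_q,\ (\mathbb{Z}_p\rtimes\mathbb{Z}_q)\times\mathbb{Z}_r,\ \mathbb{Z}_p\rtimes\mathbb{Z}_{qr}\}$, together with the two non-trivial Hopf algebras $\mathcal{A}_p(r;q)$ and $\mathcal{A}_p(q;r)=(\mathcal{A}_p(r;q))^{*}$. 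I would take $k[\mathbb{Z}_{pqr}]$, $k[(\mathbb{Z}_p\rtimes\mathbb{Z}_r)\times\mathbb{Z}_q]$, $k[(\mathbb{Z}_p\rtimes\mathbb{Z}_q)\times\mathbb{Z}_r]$ and $\mathcal{A}_p(q;r)$ as candidate representatives, and show each Hopf algebra on the list lies in exactly one of their classes.

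The heart of the argument is a ``collapse'' step: the four objects $k[\mathbb{Z}_p\rtimes\mathbb{Z}_{qr}]$, $k^{\mathbb{Z}_p\rtimes\mathbb{Z}_{qr}}$, $\mathcal{A}_p(r;q)$ and $\mathcal{A}_p(q;r)$ all lie in a single class, that of $Vec_{\mathbb{Z}_p\rtimes\mathbb{Z}_{qr}}$. To see this I would observe that all four are group-theoretical over the same pair $(G,1)$ with $G=\mathbb{Z}_p\rtimes\mathbb{Z}_{qr}$. Indeed, by Proposition \ref{fusioncategory} both non-trivial Hopf algebras arise from splitting abelian extensions whose bicrossed product is $F\bowtie\Gamma\cong\mathbb{Z}_p\rtimes\mathbb{Z}_{qr}$; since $\tau=\sigma=1$ the associated $3$-cocycle $\omega(\tau,\sigma)$ is trivial, so Natale's theorem (recalled in Section \ref{Preliminaries}, \cite{Na1}) gives $Rep(\mathcal{A}_p(r;q))\cong\mathcal{C}(G,1,\mathbb{Z}_q,1)$ and $Rep(\mathcal{A}_p(q;r))\cong\mathcal{C}(G,1,\mathbb{Z}_r,1)$, while $Rep(k^{\mathbb{Z}_p\rtimes\mathbb{Z}_{qr}})=Vec_G$ and $Rep(k[\mathbb{Z}_p\rtimes\mathbb{Z}_{qr}])=Rep(\mathbb{Z}_p\rtimes\mathbb{Z}_{qr})$. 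Each category $\mathcal{C}(G,1,L,\beta)$ is by definition categorically Morita equivalent to $Vec_G$ via $\mathcal{M}(L,\beta)$, and $Rep(\mathbb{Z}_p\rtimes\mathbb{Z}_{qr})$ is categorically Morita equivalent to $Vec_G$ because $Rep(k[G])$ and $Rep(k^G)$ are the representation categories of dual Hopf algebras and so share the same Drinfeld center $Z(Vec_G)=Rep(D(G))$ (Theorem \ref{Drinfeldcenter}, Corollary \ref{Drinfelddouble}). Transitivity of categorical Morita equivalence then places all four objects in one class.

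It remains to separate the four classes, and here the existing lemmas do the work. Lemma \ref{abelian} isolates $k[\mathbb{Z}_{pqr}]$ from everything else. For the two product classes and the $\mathcal{A}_p(q;r)$-class I would appeal to the proof of Lemma \ref{semiproductandproduct}: it shows that $(\mathbb{Z}_p\rtimes\mathbb{Z}_r)\times\mathbb{Z}_q$ and $(\mathbb{Z}_p\rtimes\mathbb{Z}_q)\times\mathbb{Z}_r$ are each categorically Morita equivalent only to themselves, being the unique non-abelian groups of order $pqr$ with centers $\mathbb{Z}_q$ and $\mathbb{Z}_r$ respectively. Since these two centers differ, the two product classes are distinct; and since $\mathbb{Z}_p\rtimes\mathbb{Z}_{qr}$ has trivial center it is isomorphic to neither product, so the class of $\mathcal{A}_p(q;r)$ --- equal to that of $Vec_{\mathbb{Z}_p\rtimes\mathbb{Z}_{qr}}=Rep(\mathbb{Z}_p\rtimes\mathbb{Z}_{qr})$ --- is distinct from both. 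As a cross-check one may run Proposition \ref{GroupMorita} directly on $\mathbb{Z}_p\rtimes\mathbb{Z}_{qr}$: its only non-trivial normal abelian subgroup is $\mathbb{Z}_p$, and the dual action yields $\widehat{\mathbb{Z}_p}\rtimes\mathbb{Z}_{qr}\cong\mathbb{Z}_p\rtimes\mathbb{Z}_{qr}$, confirming this group is categorically Morita equivalent only to itself. Together with the collapse step this produces exactly four classes.

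The step I expect to require the most care is the collapse: one must confirm both that the extensions defining $\mathcal{A}_p(r;q)$ and $\mathcal{A}_p(q;r)$ are genuinely splitting --- so that the pointed base is $Vec_G$ with trivial associator rather than some $Vec_G^{\omega}$ with $\omega\neq 1$ --- and that the two exact factorizations $\mathbb{Z}_q\bowtie(\mathbb{Z}_p\rtimes\mathbb{Z}_r)$ and $\mathbb{Z}_r\bowtie(\mathbb{Z}_p\rtimes\mathbb{Z}_q)$ really yield the same group $\mathbb{Z}_p\rtimes\mathbb{Z}_{qr}$ up to isomorphism, so that the module categories $\mathcal{C}(G,1,\mathbb{Z}_q,1)$ and $\mathcal{C}(G,1,\mathbb{Z}_r,1)$ sit over a common pointed category. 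By contrast, the separation of the four classes reduces to the center computations already packaged in Lemmas \ref{abelian} and \ref{semiproductandproduct} and is essentially routine.
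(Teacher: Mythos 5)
Your proposal is correct and follows essentially the same route as the paper: the key ``collapse'' of $k[\mathbb{Z}_p\rtimes\mathbb{Z}_{qr}]$, $k^{\mathbb{Z}_p\rtimes\mathbb{Z}_{qr}}$, $\mathcal{A}_p(r;q)$ and $\mathcal{A}_p(q;r)$ into a single class via their common group-theoretical description over $(\mathbb{Z}_p\rtimes\mathbb{Z}_{qr},1)$, followed by separation using Lemmas \ref{abelian} and \ref{semiproductandproduct}. The paper phrases the collapse slightly differently (via $(\mathcal{A}_p(q;r))^{*}\cong\mathcal{A}_p(r;q)$ and $Rep(H)^{*}_{Vec}\cong Rep(H^{*})^{op}$), but this is the same idea, and your extra checks (splitness of the extensions, the center computations) only make the argument more explicit.
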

\begin{proof}The semisimple Hopf algebras $\mathcal{A}_p(q;r),~\mathcal{A}_p(r;q)$, $k[\mathbb{Z}_{p}\rtimes(\mathbb{Z}_{q}\times \mathbb{Z}_{r})]$, and $k^{\mathbb{Z}_{p}\rtimes(\mathbb{Z}_{q}\times \mathbb{Z}_{r})}$ are in the same categorical Morita equivalent class, for we have fusion categories equivalences $Rep(\mathcal{A}_p(q;r))\cong \mathcal{C}(\mathbb{Z}_{p}\rtimes(\mathbb{Z}_{q}\times \mathbb{Z}_{r}),1,\mathbb{Z}_{r},1)$ and $Rep(H)^*_{Vec}\cong Rep(H^*)^{op}$ \cite{EGNO} , and also Hopf algebras isomorphism $(\mathcal{A}_p(q;r))^*\cong \mathcal{A}_p(r;q)$ by proposition \ref{DualHopfalgebras}, whence they are in the same class by definition \ref{catgoricalHopfalgebra}.

Combined with lemma \ref{abelian} and lemma \ref{semiproductandproduct}, we complete the proof.
\end{proof}
In the last, we determine the categorically Morita equivalent classes of case $(8)$.
\begin{thm}\label{categoricalM2}There are $6$ $($if $r=2)$ or $5+\frac{r-1}{2}$ $($ if $r>2)$ categorically Morita inequivalent classes in case $(8)$.
\end{thm}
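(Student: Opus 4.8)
The plan is to reduce case $(8)$ to the combination of the analyses already carried out for cases $(4)$ and $(7)$, since case $(8)$ contains exactly the groups of both situations. First I would list the $r+4$ groups of order $pqr$ occurring here: the abelian group $G_1=\mathbb{Z}_{pqr}$; the three groups $(\mathbb{Z}_q\rtimes\mathbb{Z}_r)\times\mathbb{Z}_p$, $(\mathbb{Z}_p\rtimes\mathbb{Z}_r)\times\mathbb{Z}_q$, $(\mathbb{Z}_p\rtimes\mathbb{Z}_q)\times\mathbb{Z}_r$ (types $G_2,G_3,G_5$); the metacyclic group $G_6=\mathbb{Z}_p\rtimes\mathbb{Z}_{qr}$; and the $r-1$ groups $G_4^{(n)}=(\mathbb{Z}_p\times\mathbb{Z}_q)\rtimes\mathbb{Z}_r$, $n=1,\dots,r-1$. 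Since $q\mid(p-1)$ and $r\mid(p-1)$ force $qr\mid(p-1)$, Propositions \ref{DualHopfalgebras} and \ref{fusioncategory} guarantee that besides the group algebras $k[G]$ and their duals $k^G$, the only other semisimple Hopf algebras of dimension $pqr$ are the nontrivial pair $\mathcal{A}_p(q;r)$ and $\mathcal{A}_p(r;q)\cong(\mathcal{A}_p(q;r))^{*}$, both built on $G_6$. Because $k[G]$ and $k^G$ are always categorically Morita equivalent (their Drinfeld doubles coincide, Corollary \ref{Drinfelddouble}), the dual Hopf algebras contribute no new classes, and it remains to partition the $k[G]$ together with the two nontrivial Hopf algebras.

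Next I would separate the ``easy'' classes. By Lemma \ref{abelian}, $k[\mathbb{Z}_{pqr}]$ is alone in its class. By Lemma \ref{semiproductandproduct}, the three groups of type $G_2,G_3,G_5$ are pairwise categorically Morita inequivalent and inequivalent to everything else; these are distinguished by their centers $\mathbb{Z}_p,\mathbb{Z}_q,\mathbb{Z}_r$. This yields four singleton classes so far. For $G_6$ I would repeat the argument of Theorem \ref{categoricalM1}: the fusion-category equivalence $\mathrm{Rep}(\mathcal{A}_p(q;r))\cong\mathcal{C}(G_6,1,\mathbb{Z}_r,1)$ together with $(\mathcal{A}_p(q;r))^{*}\cong\mathcal{A}_p(r;q)$ places $k[G_6]$, $k^{G_6}$, $\mathcal{A}_p(q;r)$ and $\mathcal{A}_p(r;q)$ in one common class. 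To see this is a genuine fifth class, I would apply Proposition \ref{GroupMorita}: all Schur multipliers here vanish (Corollary \ref{trivialSchur}), so $\nu=1$ and any group categorically Morita equivalent to $G_6$ is of the form $\widehat{H}\rtimes(G_6/H)$ for a normal abelian $H\trianglelefteq G_6$; the only nontrivial such $H$ is the unique Sylow $p$-subgroup $\mathbb{Z}_p$, and $\widehat{\mathbb{Z}_p}\rtimes\mathbb{Z}_{qr}\cong G_6$, so $G_6$ is Morita-rigid among groups of order $pqr$.

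It then remains to count the classes among the $G_4^{(n)}$. Here the structure is identical to case $(4)$, since $G_4^{(n)}$ depends only on $r\mid(p-1)$ and $r\mid(q-1)$, both valid in case $(8)$; thus I would invoke Proposition \ref{case4}, whose ``duality'' $G_4^{(n)}\leftrightarrow\widetilde{G_4}$ is an involution $n\mapsto n^{*}$ on $\{1,\dots,r-1\}$ with no fixed point when $r>2$ and a single fixed point when $r=2$. Hence the $G_4^{(n)}$ split into $\tfrac{r-1}{2}$ classes if $r>2$ and into $1$ class if $r=2$. Adding the five classes above gives $5+\tfrac{r-1}{2}$ (respectively $6$), as claimed.

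The main obstacle is the separation of $G_6$'s class from the $G_4^{(n)}$ classes: both families consist of groups with trivial center, so the center invariant used for $G_2,G_3,G_5$ is useless here. The resolution is the normal-subgroup analysis above. Concretely, $G_4^{(n)}$ possesses a normal subgroup $\mathbb{Z}_q=\langle b\rangle$ (as $a,b$ commute and $c$ normalizes $\langle b\rangle$), and its entire Morita class consists of $G_4$-type groups sharing this feature, whereas in $G_6$ the subgroup $\langle b\rangle$ satisfies $aba^{-1}=ba^{K(p,q)^{-1}-1}\notin\langle b\rangle$ and so is not normal; consequently $G_6$ never arises as a $\widehat{H}\rtimes(G_4^{(n)}/H)$, and the two families stay disjoint. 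Verifying this carefully, together with confirming that condition $(3)$ of Proposition \ref{GroupMorita} (triviality of the $3$-cocycle $\varpi$) holds in each splitting extension, is where the real work lies.
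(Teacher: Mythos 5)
Your proposal is correct and follows essentially the same route as the paper, which likewise reduces case $(8)$ to the union of the five fixed classes from Proposition \ref{onesemiproduct} and Theorem \ref{categoricalM1} plus the $G_4$-type analysis of Proposition \ref{case4} (with the involution $n\mapsto -n$ on $\{1,\dots,r-1\}$ giving $\tfrac{r-1}{2}$ classes for $r>2$ and one for $r=2$). In fact you supply more detail than the paper does, notably the normal-abelian-subgroup argument separating the $G_6$-class from the $G_4^{(n)}$-classes, which the paper asserts without explicit justification.
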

\begin{proof}As in proposition  \ref{onesemiproduct}  and Theorem \ref{categoricalM1}, Hopf algebras $k[\mathbb{Z}_{pqr}]$,  $k[(\mathbb{Z}_{p}\rtimes\mathbb{Z}_{r})\times \mathbb{Z}_{q}]$, $k[(\mathbb{Z}_{p}\rtimes\mathbb{Z}_{q})\times \mathbb{Z}_{r}]$,  $k[(\mathbb{Z}_{q}\rtimes\mathbb{Z}_{r})\times \mathbb{Z}_{p}]$, $\mathcal{A}_p(q;r)$ are in different categorically Morita equivalent classes, and they are not categorically Morita equivalent to any other type of Hopf algebras. To determine categorically Morita equivalent classes of the rest of group algebras is similar to the  proposition \ref{case4} , then we finish the proof.
\end{proof}
\begin{rmk}If $r>3$, then we have $G_4$ and its ``dual" group $\widetilde{G_4}$ are categorically Morita equivalent, but they are not isomorphic. Therefore, by Theorem \ref{monoidalM} , Theorem \ref{categoricalM2} and proposition \ref{case4}, we also obtain examples of finite groups which are categorically Morita equivalent but not isocategorical.
\end{rmk}
\section*{Acknowledgements}
The author thanks his PhD supervisor Professor Naihong Hu for valuable comments and suggestions.

\end{document}